\documentclass[a4,11pt]{amsart}
\usepackage[top=3cm, bottom=3cm, left=2.5cm, right=2.5cm]{geometry}

\usepackage{amssymb,amsmath,amsthm,txfonts, amsfonts}
\usepackage{amscd}
\usepackage[all]{xy} 
\usepackage[colorlinks=true,citecolor=blue,linkcolor=blue]{hyperref}
\usepackage{graphicx}
\usepackage{epstopdf}
\usepackage{bookmark}

\usepackage{newtxtext}
\usepackage{newtxmath}

\usepackage{mathrsfs}
\usepackage{color}
\usepackage{cite}
\usepackage{nicefrac}
\usepackage{hyperref}
\usepackage{textcomp}
\usepackage{protosem}

\usepackage[hang,small,bf]{caption}
\usepackage[subrefformat=parens]{subcaption}
\captionsetup{compatibility=false}

\newtheorem{thm}{Theorem}[section]
\newtheorem{lem}[thm]{Lemma}

\newtheorem{prop}[thm]{Proposition}
\theoremstyle{definition}
\newtheorem{defn}[thm]{Definition}
\theoremstyle{remark}
\newtheorem{rem}[thm]{\textbf{Remark}}
\newtheorem{rems}[thm]{\textbf{Remarks}}
\newtheorem{q}[thm]{\textbf{Question}}

 \makeatletter
    
    \@addtoreset{equation}{section}
  \makeatother

\makeatletter
      \def\@makefnmark{%
         \leavevmode
            \raise.9ex\hbox{\check@mathfonts
                \fontsize\sf@size\z@\normalfont%
                            \@thefnmark}%
       }
      \makeatother

\makeatletter
\@namedef{subjclassname@2020}{\textup{2020} Mathematics Subject Classification}
\makeatother





\newcommand{\nrm}[1]{\Vert#1\Vert}
\newcommand{\omg}{\omega}
\newcommand{\tht}{\theta}
\newcommand{\bt}{\beta}
\newcommand{\dlt}{\delta}
\newcommand{\bbR}{\mathbb R}

\begin{document}

\title{Homogeneous steady states for the generalized surface quasi-geostrophic equations}
\author{Ken Abe, Javier G\'{o}mez-Serrano, In-Jee Jeong}  
\address[Ken Abe]{Department of Mathematics, Graduate School of Science, Osaka Metropolitan University, 3-3-138 Sugimoto, Sumiyoshi-ku Osaka, 558-8585, Japan}
\email{kabe@omu.ac.jp}
\address[Javier G\'{o}mez-Serrano]{Department of Mathematics
Brown University
314 Kassar House, 151 Thayer St. Providence, RI 02912, USA \textit{and} Institute for Advanced Study, 1 Einstein Drive, Princeton, NJ 08540, USA}
\email{javier\_gomez\_serrano@brown.edu}
\address[In-Jee Jeong]{Department of Mathematical Sciences \textit{and} RIM, Seoul National University, Seoul 08826, Korea}
\email{injee\_j@snu.ac.kr}

\subjclass[2020]{35Q31, 35Q35}
\keywords{SQG equation, self-similar solutions, steady states, multiplier theorem, Fractional Laplacian}
\date{\today}

\begin{abstract}
We consider homogeneous (stationary self-similar) solutions to the generalized surface quasi-geostrophic (gSQG) equations parametrized by the constant $0<s<1$, representing the 2D Euler equations ($s=1$), the SQG equations $(s=1/2)$, and stationary equations ($s=0$); namely, solutions whose stream function $\psi$ and advected scalar $\omega$ are of the form 
\begin{align*}
\psi=\frac{w(\theta)}{r^{\beta}},\quad \omega=\frac{g(\theta)}{r^{\beta+2s}},
\end{align*}
in polar coordinates $(r,\theta)$ with parameter $\beta\in \mathbb{R}$. 
We classify homogeneous steady states across the full parameter space, and we identify the limiting singular regimes assuming an odd symmetric profile $(w,g)$ with Fourier modes larger than $m_0\geq 1$. Specifically, we show existence of such solutions for $-m_0-2s<\beta<-2s$ and $0<\beta<m_0+2$ ($1/2-s<\beta< m_0+2$ for $0<s<1/2$) and non-existence of such solutions for $-2s\leq \beta\leq 0$. The main result provides examples of self-similar solutions which belong to critical and supercritical regimes for the local well-posedness of the gSQG equations for $0<s<1$ and the first examples of self-similar solutions for the SQG equations and the more singular equations $0<s\leq 1/2$ in the stationary setting.  We also complement our findings with a numerical illustration of the solutions.
\end{abstract}

\maketitle

\tableofcontents

\section{Introduction}
	The surface quasi-geostrophic (SQG) equations are the fundamental model for the temperature/potential vorticity on a fluid surface under the influence of the Coriolis force. Besides its significance in geophysics, the SQG equations serve as a two-dimensional model for the vorticity evolution of the 3D Euler equations; see Constantin et al. \cite{CMT1,CMT94}. The works \cite{CFMR,CCCGW} posed the generalized surface quasi-geostrophic (gSQG) equations for $0<s<1$,

	\begin{equation}\label{eq:gSQG}
		\begin{aligned}
			\omega_t+u\cdot \nabla \omega=0,\quad 
			u=-\nabla^{\perp}(-\Delta)^{-s}\omega,
		\end{aligned}
	\end{equation}\\
	interpolating the 2D Euler equations ($s=1$) and the SQG equations ($s=1/2$), and extrapolating to the stationary equation ($s=0$), where $\nabla^{\perp}={}^{t}(-\partial_2,\partial_1)$ and the stream function $\psi$ is given by
	
	\begin{align*}
		\psi=(-\Delta)^{-s}\omega=C(s)\frac{1}{|x|^{2-2s}}*\omega,\quad  C(s)=\frac{\Gamma(1-s)}{4^{s}\pi \Gamma(s)},
	\end{align*}\\
	where $\Gamma(s)$ is the Gamma function. Further generalizations were studied in \cite{CCW3,OH1,CJO1,CJNO,MTXX,Kwon,ChWu}. In contrast to the 2D Euler equations, some gSQG solutions for $1/2\leq s<1$ develop finite time singularities in the upper half-plane $\mathbb{R}^2_+$ with vorticity $\omega$ non-vanishing on the boundary \cite{KRYZ,GP21,Zlatos23,MTXX}. 
    Without a boundary, it is still open whether singularities can be formed in finite time, for any $0<s<1$; see \cite{Scott,SD19} for some numerical evidence for singularities. We shall discuss these issues more below. 
	
\subsection{Self-similar solutions}
	In the recent work \cite{GGS24}, Garc\'{i}a and G\'{o}mez-Serrano constructed forward self-similar solutions to the gSQG equations exhibiting spiral streamlines, for initial data of the form 
	
\begin{equation*}
		\omega(x,0)=\frac{g_{*}(\theta)}{r^{\beta+2s}}\quad\mbox{with} \quad \frac{1}{2}<s<1,
	\end{equation*}\\
	for $0<\beta<2-2s$ and any $2\pi/m$-periodic function $g_{*}(\theta)$ for $m\geq 1$ close to the radially symmetric homogeneous solutions 
	
	\begin{align*}
		\bar\psi(x)=\frac{\bar{w}}{r^{\beta}},\quad \bar\omega(x)=\frac{1}{r^{\beta+2s}},
	\end{align*}\\
	where $(r,\theta)$ denote the polar coordinates and $\bar{w}$ is a constant. The condition $0<\beta<2-2s$ stems from the integrability of the radially symmetric $\bar\omega$ by the operator $(-\Delta)^{-s}$. Here, we say that $(\psi,\omega)$ is a \textit{forward self-similar} solution to \eqref{eq:gSQG} if there exists $\beta\in \mathbb{R}\backslash\{-2\}$ such that  
	
	\begin{equation}\label{eq:FSS}
	\psi(x,t)=\lambda^{\beta}\psi(\lambda x, \lambda^{\beta+2}t),\quad \omega(x,t)=\lambda^{\beta+2s} \omega(\lambda x, \lambda^{\beta+2}t),\quad x\in \mathbb{R}^{2},\ t>0,\ \lambda>0.
	\end{equation}\\
	We say that $(\psi,\omega)$ is a \textit{scale-invariant} solution to \eqref{eq:gSQG} if this scaling law holds for $\beta=-2$ \cite{EJ20b,DE,CCZ21}. We say that $(\psi,\omega)$ is a \textit{homogeneous} (stationary self-similar) solution if a forward self-similar/scale-invariant solution $(\psi,\omega)$ is time-independent; namely, if it is of the form  
	
\begin{align}\label{eq:HS}
\psi(x)=\frac{w(\theta)}{r^{\beta}},\quad \omega(x)=\frac{g(\theta)}{r^{\beta+2s}}.
\end{align}\\	
	Motivated by the close connection between two kinds (non-stationary and stationary) of self-similar solutions, we discuss the following question: 
	
	\begin{q}\label{q:Q}
		What kinds of non-trivial homogeneous solutions \eqref{eq:HS} exist?  
	\end{q}
	
	To the best of our knowledge, there were no examples of non-trivial self-similar solutions for the SQG case ($s=1/2$) and more generally for $0<s<1/2$ prior to this work, \cite{GGS24} being the only work covering the range $1/2<s<1$.
	
	\subsection{The statement of the main result}
	
	In this paper, we explore Question \ref{q:Q}, assuming odd symmetry for profile functions
	
	\begin{align}\label{eq:Odd}
		w(\theta)=-w(-\theta),\quad g(\theta)=-g(-\theta),\quad \theta\in \mathbb{T}=[0,2\pi].
	\end{align}\\
	This is equivalent to seeking solutions that can be represented by a sine series:
	
	\begin{align}\label{eq:Sineseri}
w(\theta)=\sum_{m=m_0}^{\infty}\left(\frac{1}{\pi}\int_{0}^{2\pi} w(q)\sin(mq)d q\right)\sin(m\theta),\quad g(\theta)=\sum_{m=m_0}^{\infty}\left(\frac{1}{\pi}\int_{0}^{2\pi} g(q)\sin(mq)d q\right)\sin(m\theta),\quad m_0\geq 1.
	\end{align}\\
This odd symmetry assumption allows us to avoid solutions close to the radial functions, and is relevant to existing infinite and finite-time singularity results \cite{KS,Den15a,Den15b,EH,KRYZ,KYZ17,GP21,Zlatos23,MTXX,JKY25}. At a technical level, this symmetry reduces the problem to the upper half-plane $\mathbb{R}^2_+$ and introduces an extra cancellation in the kernel for $\psi$, allowing us to get existence in a larger range of $\beta$.
	
	More specifically, we consider the stationary equations of the form, 
	
	\begin{align}\label{eq:SgSQG}
		\nabla^{\perp}\psi\cdot \nabla \omega=0,\quad (-\Delta)^{s}\psi=\omega.
	\end{align}\\
	The fractional Laplacian restricts the homogeneity to $-2s<\beta<0$ in differentiating radially symmetric homogeneous stream functions. (Integrating the radially symmetric homogeneous vorticity $\psi=(-\Delta)^{-s}\omega$ imposes the condition $0<\beta<2-2s$). We show, in contrast to the radially symmetric homogeneous solutions, the non-existence of odd symmetric homogeneous solutions in the range $-2s\leq \beta\leq 0$ and the existence of odd symmetric homogeneous solutions in the complementary range $\beta<-2s$ and $0<\beta$. The main result of this paper is as follows: \\

\begin{thm}\label{t:thm}
Let $0< s< 1$, $m_0\in \mathbb{N}$, and $-m_0-2s<\beta< m_0+2$. The following holds for odd symmetric homogeneous solutions \eqref{eq:HS} and \eqref{eq:Sineseri} for the stationary gSQG equations \eqref{eq:SgSQG} with profiles $(w,g)$.\\

\begin{itemize}
\item[(A)] \textbf{Between SQG and Euler}; $1/2< s< 1$\\
\textbf{(Non-existence)}\\
(i) For $-2s<\beta \leq 0$, there exist no solutions $(w,g)\in C^{1}(\mathbb{T})$ except for $w=C\sin\theta$ and $g=0$ with a constant $C$.\\
(ii) For $\beta=-2s$, there exist no solutions $(w,g)\in C^{2}(\mathbb{T})$.\\
\noindent
\textbf{(Existence)} \\
For $0<\beta<m_0+2$ and $-m_0-2s<\beta<-2s$, there exists a solution $(w,g)\in C^{2s+1+\frac{2s}{\beta}-\varepsilon}(\mathbb{T})\times C^{1+\frac{2s}{\beta}-\varepsilon}(\mathbb{T})$ for arbitrary small $\varepsilon>0$. If $2s+1+2s/\beta\notin \mathbb{N}$ and $1+2s/\beta\notin \mathbb{N}$, $\varepsilon=0$. 
\item[(B)] \textbf{SQG}; $s=1/2$\\
\textbf{(Non-existence)}\\
(i) For $-1<\beta\leq 0$, there exist no solutions $(w,g)\in C^{1}(\mathbb{T})$.\\
(ii) For $\beta=-1$, there exist no solutions $(w,g)\in C^{2}(\mathbb{T})$ except for $w=C\sin\theta$ and $g=0$ with a constant $C$.\\
\textbf{(Existence)} \\
For $0<\beta< m_0+2$ and $-m_0-1<\beta<-1$, there exists a solution $(w,g)\in C^{2+\frac{1}{\beta}-\varepsilon}(\mathbb{T})\times C^{1+\frac{1}{\beta}-\varepsilon}(\mathbb{T})$ for arbitrary small $\varepsilon>0$. If $1/\beta\notin \mathbb{N}$, $\varepsilon=0$.
\item[(C)] \textbf{More singular case}; $0< s< 1/2$\\
\noindent
\textbf{(Non-existence)}\\
(i) For $-2s< \beta \leq 0$, there exist no solutions $(w,g)\in C^{1}(\mathbb{T})$.\\
(ii) For $\beta=-2s$, there exist no solutions $(w,g)\in C^{2}(\mathbb{T})$.\\
\textbf{(Existence)} \\
(i) For $1/2-s<\beta< m_0+2$ and $-m_0-2s<\beta<-1$, there exists a solution $(w,g)\in C^{2s+1+\frac{2s}{\beta}-\varepsilon}(\mathbb{T})\times C^{1+\frac{2s}{\beta}-\varepsilon}(\mathbb{T})$ for arbitrary small $\varepsilon>0$. If $2s+1+2s/\beta\notin \mathbb{N}$ and $1+2s/\beta\notin \mathbb{N}$, $\varepsilon=0$.\\ 
(ii) For $-1\leq \beta <-2s$, there exists a solution $(w,g)\in C^{-\beta-\varepsilon}(\mathbb{T})\times C^{-\beta-2s-\varepsilon}(\mathbb{T})$ for any small $\varepsilon>0$.\\
\end{itemize}
\end{thm}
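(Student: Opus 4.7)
The plan is to reduce the coupled system \eqref{eq:SgSQG} under the ansatz \eqref{eq:HS}--\eqref{eq:Sineseri} to a single nonlinear pseudodifferential equation on $\mathbb{T}$ and then treat non-existence, existence, and regularity in turn. Inserting the ansatz into the transport equation $\nabla^{\perp}\psi\cdot\nabla\omega=0$ and factoring out the common power of $r$ yields the first-order ODE
\[
-\beta\,w(\theta)\,g'(\theta)+(\beta+2s)\,w'(\theta)\,g(\theta)=0,
\]
which integrates on intervals where $w$ keeps its sign to the constitutive relation $g=c_0\,w|w|^{2s/\beta}$ with $c_0\in\mathbb{R}$ free. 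Inserting the ansatz into $(-\Delta)^s\psi=\omega$ reduces, by the homogeneity and rotational covariance of the Riesz potential in $\mathbb{R}^2$, to $L_\beta w=g$, where the angular operator $L_\beta$ acts diagonally on the sine basis, $L_\beta\sin(m\theta)=\lambda_m(\beta,s)\sin(m\theta)$, with eigenvalues $\lambda_m(\beta,s)$ given explicitly as a ratio of Gamma functions. The scaling $w\mapsto\lambda w$ absorbs $|c_0|$, reducing the problem to
\[
L_\beta w=\sigma\, w|w|^{2s/\beta},\qquad \sigma\in\{\pm 1\},
\]
on the subspace of odd functions whose sine series is supported on $m\geq m_0$.

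For the non-existence statements in the range $-2s\leq\beta\leq 0$, the approach is to analyze the sign of $\lambda_m(\beta,s)$ via the Gamma reflection formula $\Gamma(z)\Gamma(1-z)=\pi/\sin(\pi z)$ together with monotonicity. One expects $\lambda_m$ to be non-positive for all $m\geq m_0$ in this range, with equality only in the exceptional harmonic case $\beta=-1$, $m=1$, $g=0$; pairing the equation against $w$ via Plancherel then forces $w\equiv 0$. The endpoint $\beta=-2s$ loses a derivative in that pairing and is handled by a Pohozaev-type rotational identity obtained by multiplying with an angular weight, which explains the $C^2$ hypothesis.

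For existence, the allowed ranges of $\beta$ are precisely those where $\lambda_m(\beta,s)>0$ for every $m\geq m_0$ and $\lambda_m\sim m^{2s}$ at infinity, so that $L_\beta^{-1}$ is a smoothing operator of order $2s$ on the odd, $m_0$-truncated sector. Nontrivial solutions are sought as critical points of
\[
E(w)=\tfrac{1}{2}\langle L_\beta w,w\rangle-\tfrac{1}{p}\int_{\mathbb{T}}|w|^p\,d\theta,\qquad p=2+\tfrac{2s}{\beta},
\]
on a fractional Sobolev space built from $L_\beta$. For $\beta>0$ the exponent $p>2$ is superlinear and a constrained minimization (or mountain-pass/Nehari argument) produces a nontrivial critical point, with subcriticality of $H^{s}(\mathbb{T})\hookrightarrow L^p(\mathbb{T})$ valid for $\beta>\tfrac{1}{2}-s$, which is exactly the cutoff appearing in case (C). For $\beta<-2s$ the exponent $p\in(1,2)$ is sublinear and $E$ is coercive, so a direct minimization or a Schauder fixed point in a suitable Hölder ball provides a nontrivial solution. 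The Fourier restriction $m\geq m_0$ is preserved automatically because $L_\beta$ is a Fourier multiplier.

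Regularity follows by bootstrapping: oddness combined with nontriviality forces $w$ to have simple zeros at $\theta=0,\pi$, so $g=\sigma\,w|w|^{2s/\beta}\sim\theta|\theta|^{2s/\beta}$ is exactly $C^{1+2s/\beta-\varepsilon}$, and applying $L_\beta^{-1}$ gains $2s$ derivatives, yielding $w\in C^{2s+1+2s/\beta-\varepsilon}$ with $\varepsilon=0$ at non-integer exponents; the weaker scale in case (C)(ii) corresponds to the regime where simple zeros fail and the ansatz $g\sim\theta|\theta|^{2s/\beta}$ must be replaced by a more degenerate local model. The main obstacle is the existence step: the exponent $p=2+2s/\beta$ sweeps through superlinear, linear, and sublinear regimes as $\beta$ varies, while $\lambda_m(\beta,s)$ degenerates at the endpoints of its allowed interval. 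Tailoring the variational scheme to each regime, ensuring non-vanishing of the critical point within the $m_0$-truncated subspace, and controlling compactness up to the Sobolev threshold responsible for the $\tfrac{1}{2}-s<\beta$ restriction in case (C), is the delicate part.
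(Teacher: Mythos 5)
Your overall architecture (reduce to a scalar pseudodifferential equation on $\mathbb{T}$, treat existence variationally, bootstrap regularity through the $2s$-smoothing inverse) matches the paper, but two steps as proposed would fail. First, the non-existence argument cannot run on the sign of the eigenvalues: the paper shows that the symbol $\hat{K}_m(s,\beta)$ is \emph{positive} and hence the eigenvalues $\mu_m(s,\beta)=2\pi\hat{K}_m(s,\beta)(m^{2}-\beta^{2})$ are positive for all $-m_0<\beta<m_0$, a range containing essentially all of $[-2s,0]$; so "pairing against $w$ and using non-positivity" has nothing to bite on. Moreover, in the range $-2s<\beta<0$ your constitutive reduction $g=c_0w|w|^{2s/\beta}$ is not available: the exponent $2s/\beta<-1$ is negative, and what integration of $\beta w\partial_\theta g=(\beta+2s)g\partial_\theta w$ actually gives is $|w|^{\beta+2s}|g|^{-\beta}=C$ on intervals where $wg\neq0$, with \emph{both} exponents positive. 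Combined with the odd-symmetry zeros at $\theta=0,\pi$ this forces $C=0$ and hence $wg\equiv0$; but since $\mathcal{L}(s,\beta)$ is nonlocal, $wg\equiv 0$ does not immediately yield $g=0$, and the paper needs an additional Parseval identity (using $\partial_\theta w\,\partial_\theta g\equiv0$ and the positivity of $\hat{K}_m$) to conclude. The endpoint $\beta=-2s$ is handled by the same device at second order (whence the $C^2$ hypothesis), not by a Pohozaev identity.

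Second, in the existence step your claim that for $\beta<-2s$ the functional is coercive and direct minimization suffices is false on the subrange $-m_0-2s<\beta\leq -m_0$: there $\mu_{m_0}(s,\beta)\leq 0$ (and also $\mu_{m_0+1}\leq 0$ when $1/2<s<1$ and $\beta<-m_0-1$), so along the corresponding finite-dimensional eigenspace $Y$ one has $I[y]=\tfrac12 B(y,y)-\tfrac{c\beta}{2(\beta+s)}\int|y|^{p}\to-\infty$ quadratically, since the sublinear term $\|y\|^{p}$ with $p<2$ cannot compensate. The paper therefore splits $X=Y\oplus Z$, proves $B(z,z)\gtrsim\|z\|_{H^s}^2$ on $Z$, and invokes the saddle-point theorem on this decomposition; minimization only works on $-m_0<\beta<-2s$ where all eigenvalues are positive. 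A smaller inaccuracy: the eigenvalues are not given by a closed-form Gamma ratio in the paper but by a hypergeometric-type series for $\hat{K}_m(s,\beta)$, whose positivity, monotonicity and $|m|^{-2+2s}$ asymptotics (needed for the multiplier and H\"older estimates) require separate proof; and the regularity bootstrap in case (C)(ii) stops at $C^{-\beta-\varepsilon}$ because the iteration $\gamma_{n+1}=(1+2s/\beta)\gamma_n+2s$ converges to $-\beta<1$, not because of a degenerate local model at the zeros of $w$.
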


See Figure \ref{fig:1} for the non-existence range and the existence range of $\beta$.

\begin{figure}[h]
  \begin{minipage}[b]{0.45\linewidth}
\hspace{-118pt}
\includegraphics[scale=0.195]{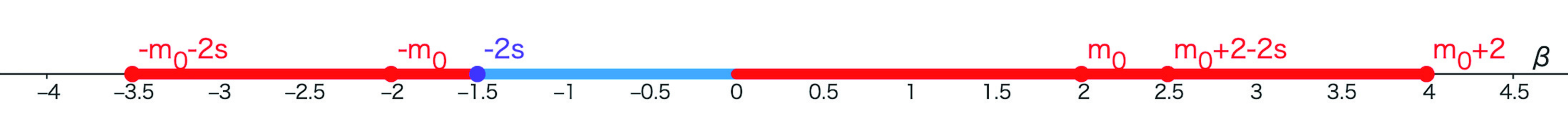}
\subcaption{$1/2< s< 1$ (Case $s=3/4$)}
  \end{minipage}\\
    \begin{minipage}[b]{0.45\linewidth}
\hspace{-118pt}
\includegraphics[scale=0.195]{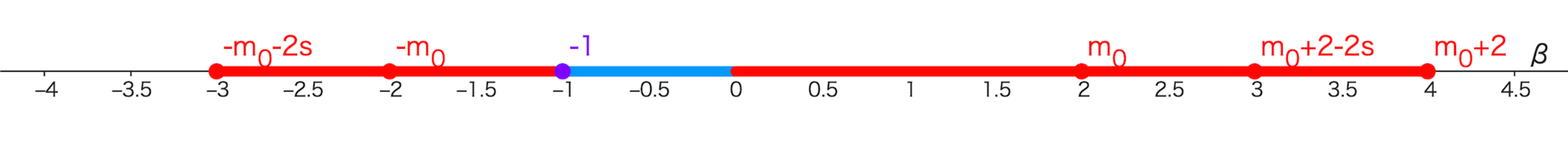}
\subcaption{$s=1/2$}
  \end{minipage}\\
  \vspace{-3pt}
    \begin{minipage}[b]{0.45\linewidth}
\hspace{-118pt}
\includegraphics[scale=0.195]{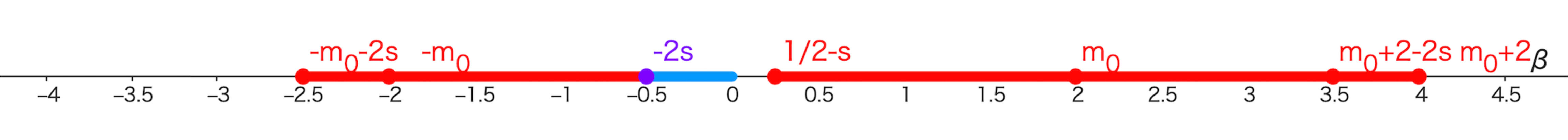}
\subcaption{$0<s< 1/2$ (Case $s=1/4$)}
  \end{minipage}
  \caption{The existence range (red) and non-existence range (blue) of $\beta$ in Theorem \ref{t:thm}. Case $m_0=2$.}\label{fig:1}
\end{figure}

\vspace{5pt} 

	A key observation in Theorem \ref{t:thm} is the relationship between the homogeneous degree $\beta$ and the frequency parameter $m_0$; namely, that large $|\beta|$ homogeneous solutions lie on high-frequency spaces for the angular variable. Indeed, the irrotational solutions to \eqref{eq:SgSQG} 
	
	\begin{align*}
		w=\sin(\beta\theta),\quad g=0,\quad \beta\in \mathbb{Z},
	\end{align*}\\
	exemplify that the degree plays a role in frequencies: Figure \ref{fig:2} shows oscillations of streamlines of irrotational solutions for large $|\beta|$. For the Euler equations, large $|\beta|$ solutions exist without removing low-frequencies \cite{Abe11}. Namely, the existence result holds without the condition $-m_0-2<\beta< m_0+2$. (The non-existence result in the statement (A) is also valid for the case $s=1$.)

	\vspace{5pt} 
	
	\begin{figure}[h]
		\centering
		\begin{minipage}{0.22\columnwidth}
			\centering
			\includegraphics[width=\columnwidth]{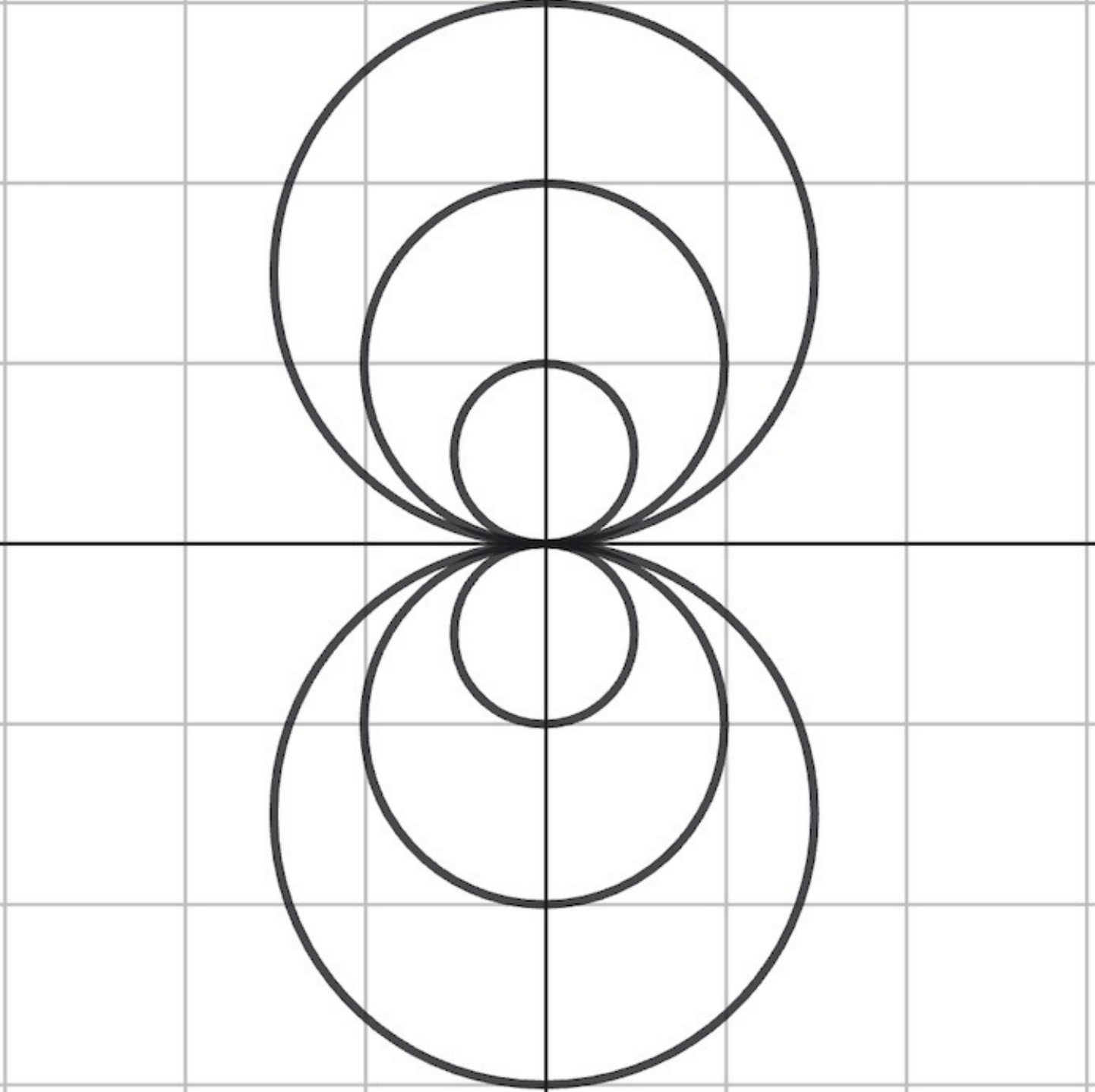}
			\subcaption*{$\beta=1$}
			\label{fig:サンプルA}
		\end{minipage}
		\hspace{5mm}
		\begin{minipage}{0.22\columnwidth}
			\centering
			\includegraphics[width=\columnwidth]{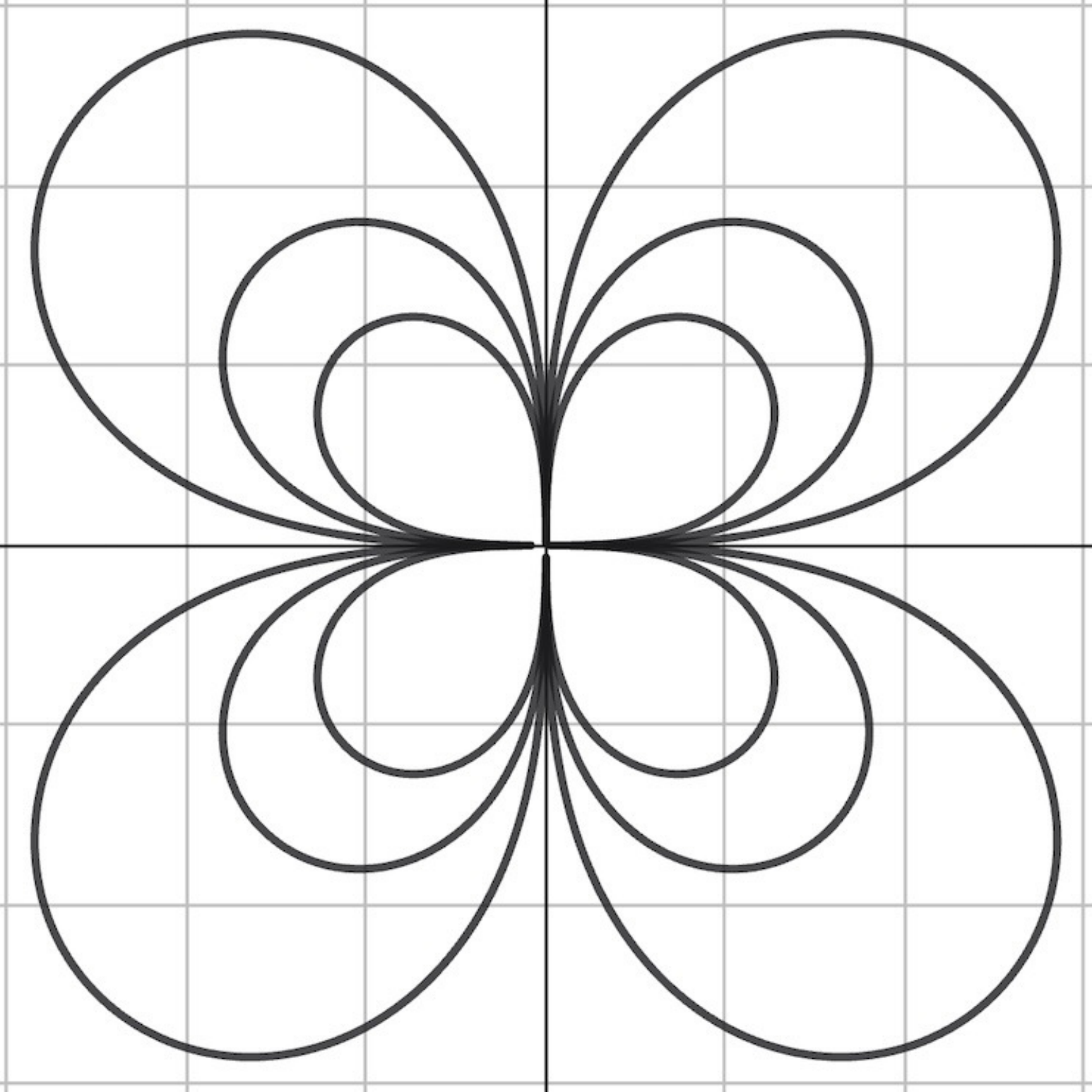}
			\subcaption*{$\beta=2$}
			\label{fig:サンプルB}
		\end{minipage}
		\hspace{5mm}
		\begin{minipage}{0.22\columnwidth}
			\centering
			\includegraphics[width=\columnwidth]{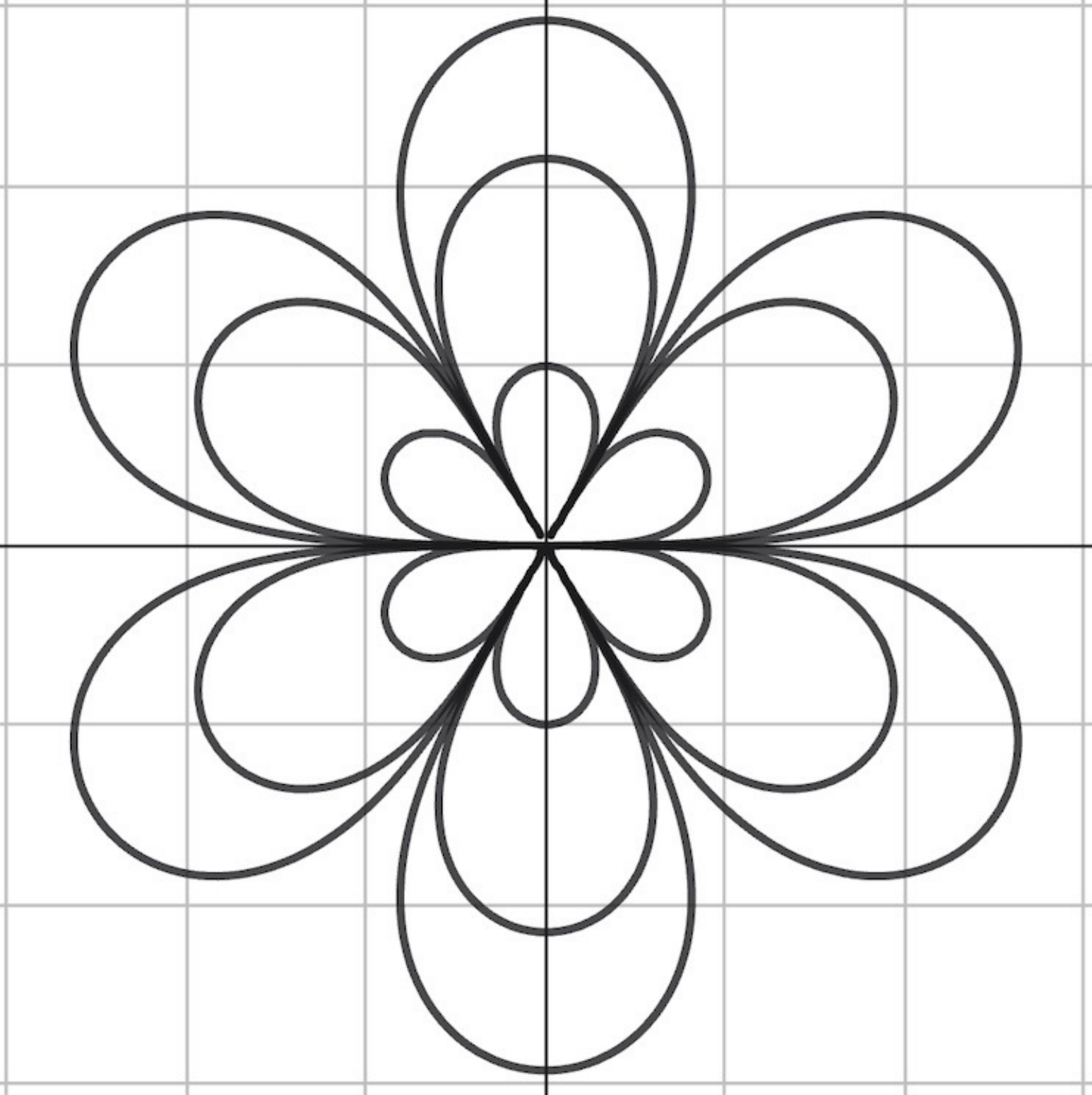}
			\subcaption*{$\beta=3$}
			\label{fig:サンプルC}
		\end{minipage}
		
		\vspace{3mm}
		
		\begin{minipage}{0.22\columnwidth}
			\centering
			\includegraphics[width=\columnwidth]{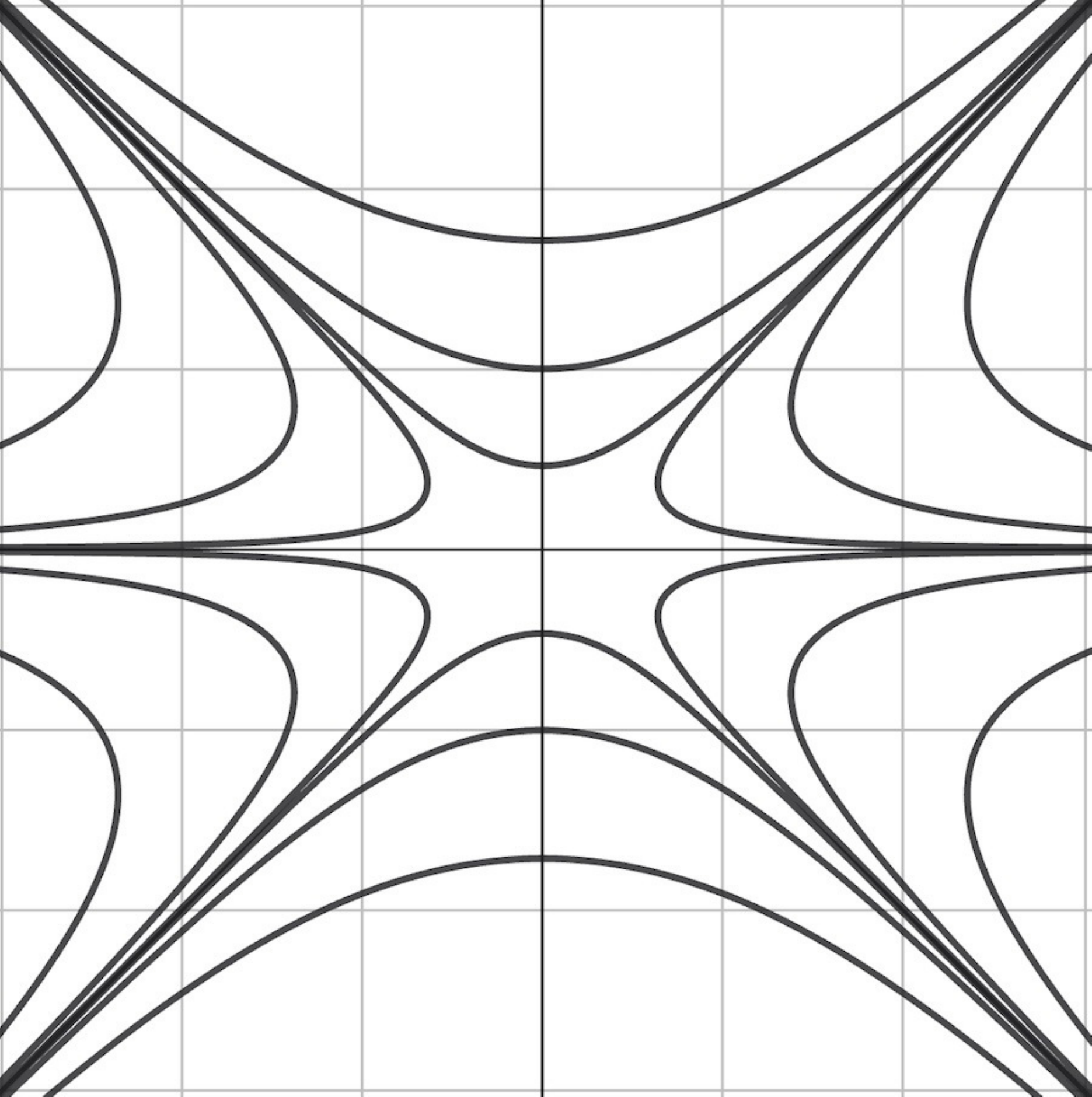}
			\subcaption*{$\beta=-3$}
			\label{fig:サンプルD}
		\end{minipage}
		\hspace{5mm}
		\begin{minipage}{0.22\columnwidth}
			\centering
			\includegraphics[width=\columnwidth]{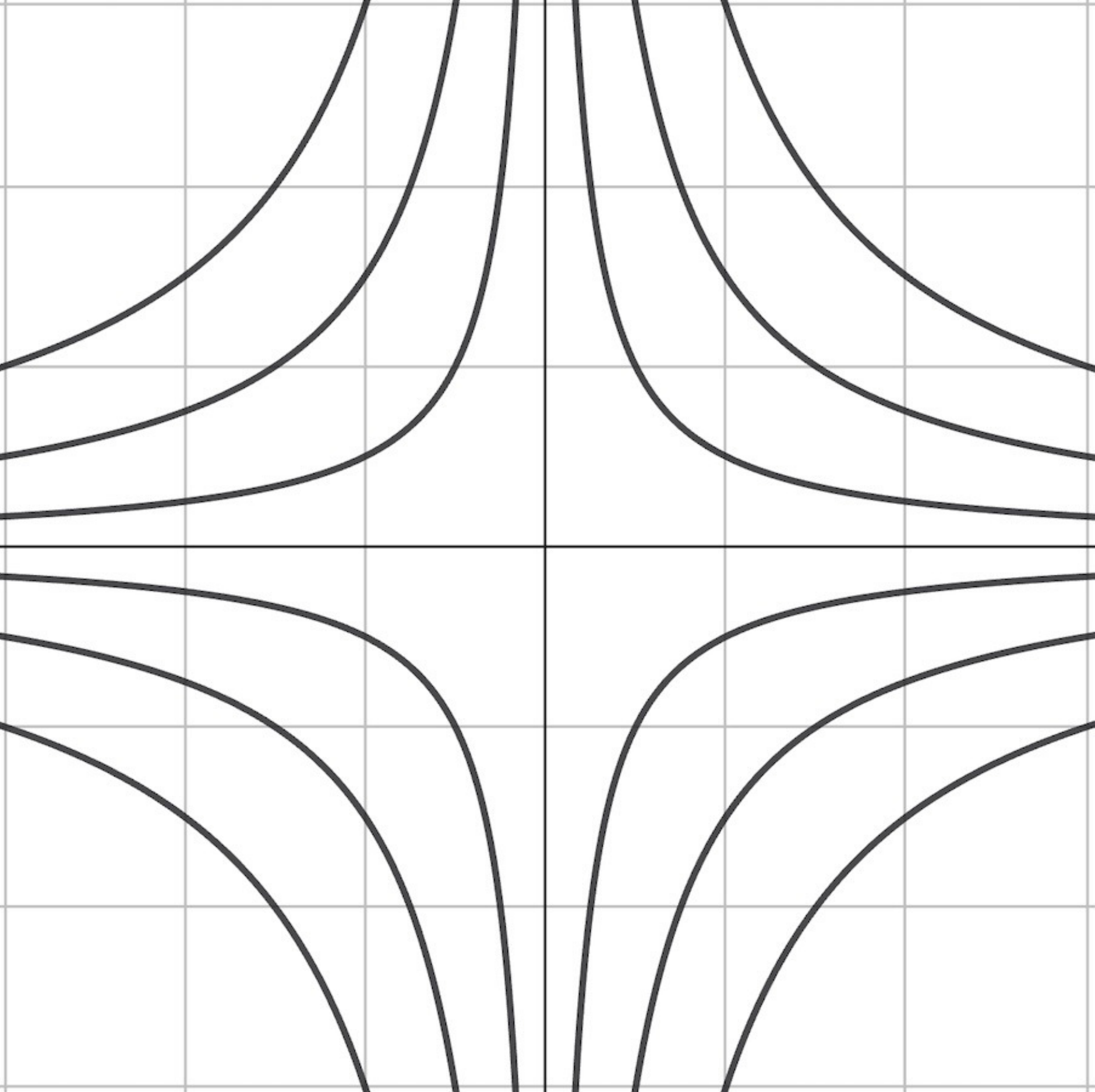}
			\subcaption*{$\beta=-2$}
			\label{fig:サンプルE}
		\end{minipage}
		\hspace{5mm}
		\begin{minipage}{0.22\columnwidth}
			\centering
			\includegraphics[width=\columnwidth]{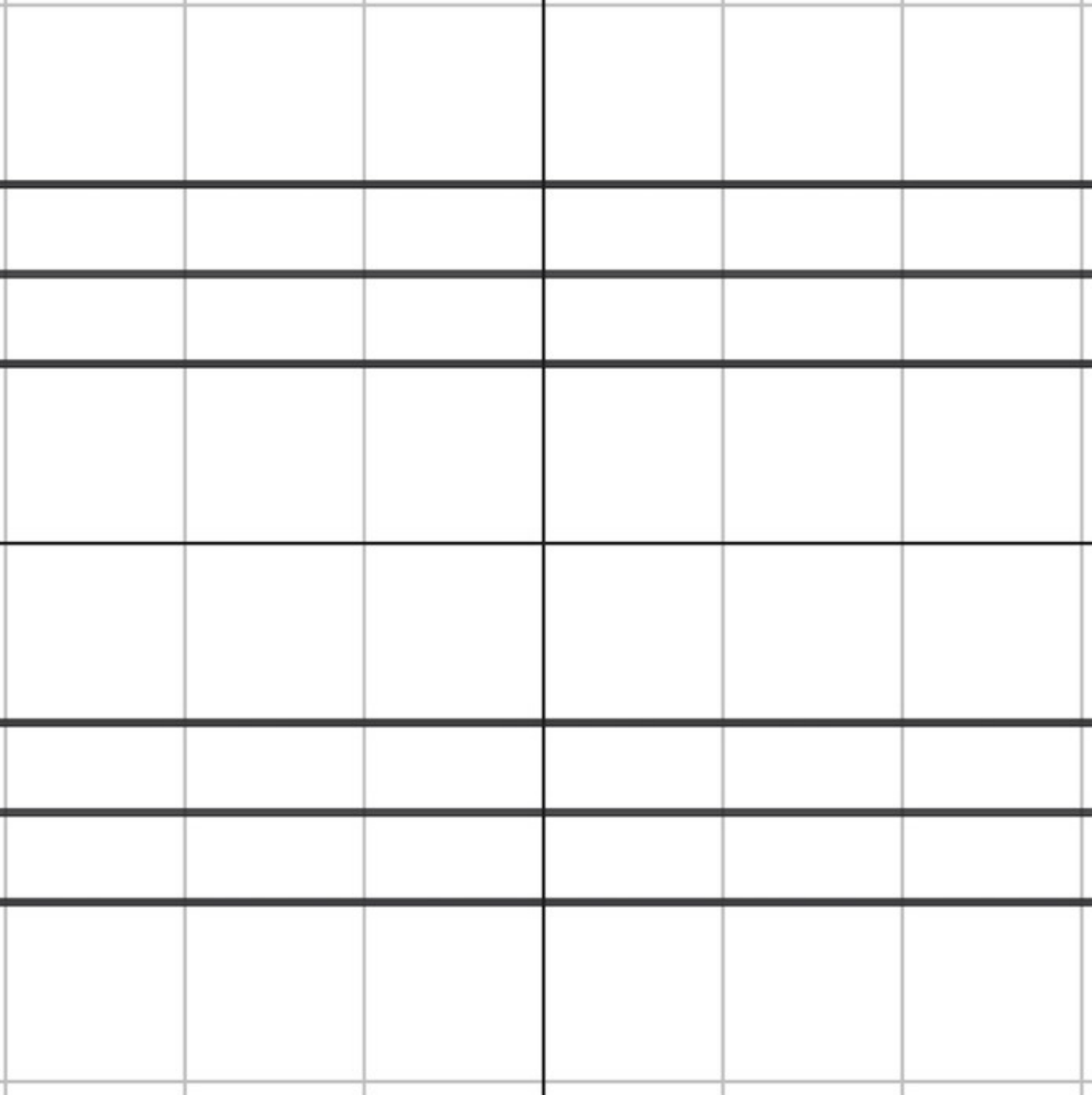}
			\subcaption*{$\beta=-1$}
			\label{fig:サンプルF}
		\end{minipage}
		\caption{Streamlines of irrotational solutions $\psi=r^{-\beta}\sin \beta \theta$ and $\omega=0$}\label{fig:2}
	\end{figure}

 
    In the existence statements of Theorem \ref{t:thm}, $(\psi,\omega)$ satisfies the equation \eqref{eq:SgSQG} in the sense that $\omega=c\psi|\psi|^{\frac{2s}{\beta}}$ for a constant $c>0$. If $(\psi,\omega)$ is $C^{1}$ on $\{r=1\}$ with $\beta>0$, then it satisfies \eqref{eq:SgSQG} pointwise. This is guaranteed in the case (ii) of (A) for $\beta>0$, since we have $w\in C^{2}(\mathbb{T})$ and $g\in C^{1}(\mathbb{T})$. 

    We comment on the condition for $(\psi,\omega)$ to be a weak solution of \eqref{eq:SgSQG}. For a compactly supported smooth test function $\varphi$, we may rewrite \begin{align*}
		\int_{\mathbb{R}^2} \varphi \nabla^\perp \psi \cdot \nabla \omg dx = \int_{\mathbb{R}^2} (-\Delta)^{s/2}\psi \, [ \nabla \varphi \cdot \nabla^\perp , (-\Delta)^{s/2} ] \psi dx. 
	  \end{align*} 
    Since the commutator gains one  derivative, $(-\Delta)^{(s-1)/2}\psi \in L^{2}_{loc}(\mathbb{R}^2)$ guarantees that the above integral is well-defined. For $w \in H^{1-s}(\mathbb{T}),$ this condition is satisfied whenever $\beta < 2 - s.$ When $\beta \ge 2 - s$, $(\psi,\omega)$ is a weak solution only in $\mathbb{R}^{2}\backslash\{ 0 \}$. Theorem \ref{t:thm} does not guarantee $w \in H^{1-s}(\mathbb{T})$ in the case (ii) of (C) for $s-1\leq \beta<-2s$, when we show only $w\in C^{-\beta-\varepsilon}(\mathbb{T})$.  
	
\subsection{{Five self-similar regimes}}

	Let us discuss some motivation, related literature, and potential applications of Theorem \ref{t:thm}, {based on the scaling law for self-similar solutions to the gSQG equations \eqref{eq:FSS} with $\beta$}. We shall see that there are at least {five distinguished regimes} for $\beta$; see Figure \ref{fig:3} \\

	\begin{enumerate}
		\item[(i)] $\beta = -2$ ({scale-invariant}), which is critical for local well-posedness of the gSQG equations. 
		\item[(ii)] {$-2<\beta < -2s$, which is supercritical for local well-posedness of the gSQG equations with locally bounded $\omega$.}
		\item[(iii)] $\beta = -2s$ ({$0$-homogeneous vortex}), which is critical in terms of the strongest conserved quantity $\nrm{\omg}_{L^\infty}$.  
		\item[(iv)] {$-2s<\beta<0$, which is a regime with locally unbounded $\omega$ and bounded $\psi$.}
		\item[(v)] $\beta = 0$ ({$0$-homogeneous stream function}), after which $\psi$ becomes unbounded near $x = 0$. 
	\end{enumerate}

	\begin{figure}[h]
  \begin{minipage}[b]{0.45\linewidth}
\hspace{-118pt}
\includegraphics[scale=0.195]{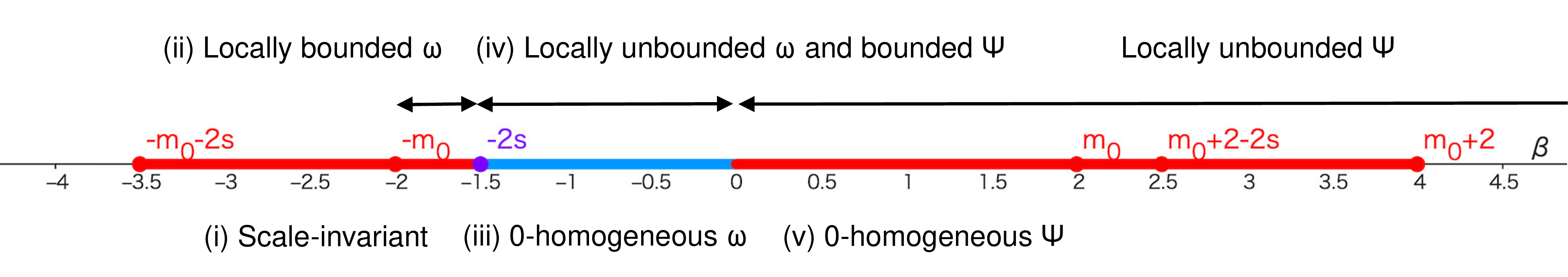}
  \end{minipage}
  \caption{Five self-similar regimes (i)-(v) in critical and supercritical regimes $\beta\geq -2$ for local well-posedness of gSQG (Case $s=3/4$ and $m_0=2$)}\label{fig:3}
\end{figure}

	\subsubsection{{$\beta=-2$ (scale-invariant)}}

	Note that Theorem \ref{t:thm} shows the existence of homogeneous solutions for the scale-invariant case $\beta = -2$. Scale-invariant solutions provide important information about generic solution behavior on blow-up, long-time dynamics, instability, etc. This is related to the fact that for all $0<s\le1$, the scale-invariant solutions are \textit{critical} for local well-posedness, as the corresponding velocity is exactly Lipschitz continuous and not better.
	
	The evolution of scale-invariant solutions is described by a time-dependent system on the torus \cite{EJ20b}. For the case of the Euler equations ($s=1$), the time-dependent profile $(w,g)$ satisfies the non-local transport equations 
	
	\begin{align*}
		\partial_t g+2w \partial_{\theta}g=0,\quad 
		-(\partial_{\theta}^{2}+4)w=g.
	\end{align*}\\
	The work \cite{EJ20b} establishes the existence and uniqueness of global in time solutions to this system for bounded $m$-fold symmetric initial vorticity. The subsequent work \cite{Elgindi2022} investigates steady states and long-time behavior of solutions, and in particular shows that all bounded steady states $g\in L^{\infty}(\mathbb{T})$ of this system are piecewise constants (under $m$-fold symmetry). See also \cite{EMS}. 
	
	Furthermore, the scale-invariant class is the natural setup for studying vortex patches with corners \cite{EJSVP1,EJSVP2,ElJo} and for logarithmic vortex spirals, including Prandtl and Alexander spirals 
    \cite{JS24,EG,CKO24a,CKO24b,CKO25,Cho}.
	 
	Such studies can be extended to scale-invariant solutions for the gSQG equations in the whole range $0<s<1$; in this case, the time-dependent profile $(w,g)$ satisfies the following system reminiscent of 1D blow-up models of the 3D Euler equations \cite{EJ20b,CCZ21}: 
	
	\begin{align*}
		\partial_t g+2w \partial_{\theta}g=2(1-s)g\partial_{\theta}w,\quad 
		\mathcal{L}(s,-2)w=g.
	\end{align*}\\
	In the current work, we show that for all $0<s<1$, $\mathcal{L}(s,-2)$  is a non-local $2s$-th order differential operator on the torus, with some series expansions for the multiplier. For the SQG equations ($s=1/2$), this system resembles the generalized Constantin--Lax--Majda (gCLM) equations for $a=2$ introduced in \cite{OSW08,OSW14}: 
	
	\begin{align*}
		\partial_t g+aw \partial_{\theta}g=g\partial_{\theta}w,\quad 
		\partial_{\theta}w=Hg,
	\end{align*}\\
	where $H$ denotes the Hilbert transform (The cases $a=0$ and $a=1$ are the CLM equations \cite{CLM85} and De Gregorio equations \cite{DeG}, respectively). It has recently been demonstrated that the gCLM equations for $0\leq a\leq 1$ exhibit various self-similar blow-ups \cite{EJ20c,CHH21,LSS,EGM21,HQWW, HTW,WLGB}. At the same time, the De Gregorio equations admit a large class of global-in-time solutions \cite{JSS19,LLR,Chen25,Chen21}. In particular, the work \cite{JSS19} shows the asymptotic stability of the steady state $g=\sin\theta$. Similar global regularity and asymptotic stability results could hold also for the scale-invariant solutions in the range $0<s<1$; note that Theorem \ref{t:thm} shows the existence of H\"older continuous steady states $g\in C^{1-s}(\mathbb{T})$ for the scale-invariant case $\beta=-2$ and all $0<s<1$. A closely related one-dimensional model derived from the SQG equation was studied in \cite{CC10}.

\subsubsection{{$-2<\beta<-2s$}} The case $-2 < \beta$ is \textit{supercritical} for local well-posedness (for any $s$), since the corresponding velocity field is strictly less regular than Lipschitz continuous, which is essential for solving the transport equation uniquely. Already at the critical regularity when the initial velocity is (almost) Lipschitz continuous, the corresponding gSQG solution may lose regularity \cite{BL1,BL2,KJ24,CMZ1,Elgindi20,Jeong21,EJ17,CJK25}. A few recent works even provide non-existence results in the supercritical Sobolev spaces \cite{CMZ2,CMZO,CMZO2}. Furthermore, with a forcing term, non-uniqueness of the gSQG solutions was recently obtained even in the ``slightly'' supercritical regime \cite{Vishik18,Vishik18b,ABCDGK,DoMe,CFMS}. Remarkably, even without forcing, \cite{CFMS} obtained a non-uniqueness result at infinite time for $1/2\leq s<1$ by proving non-linear instability of certain radial vortices. Theorem \ref{t:thm} constructs homogeneous solutions in the entire range $-2<\beta<-2s$, for all $0<s<1$. Studying the behavior of perturbations of these solutions might provide alternative approaches to non-existence and non-uniqueness problems in a larger range of supercritical spaces.

\subsubsection{{$\beta=-2s$ ($0$-homogeneous vortex)}}
	
	The case $\beta = -2s$ corresponds to having $\omega$ as a pure function of $\theta$, i.e. $\omega(x) = g(\theta)$, and deserves a separate discussion. Note that $g \in L^{\infty}$ implies $\omega \in L^{\infty}$, and $L^{\infty}$ is the strongest conserved quantity for the gSQG equations. 
	
	
	For the Euler equations $s = 1$, the Bahouri--Chemin patch \cite{BC94}, which has piecewise constant vorticity {$g(\theta)=\textrm{sgn}({\cos\theta})\textrm{sgn}({\sin\theta})$}, is a particularly important steady state, as it may arise as the infinite time limit of Euler solutions under odd symmetries at the axes \cite{EH,Den09,Den15a,Den15b,KS}. More precisely, the \textit{structure of the set of steady states} near the Bahouri--Chemin patch could determine the behavior of nearby solutions, both in the smooth and patch cases. This was one of the motivations for \cite{EH,Den09,Den15a,Den15b} to consider families of Euler steady states near Bahouri--Chemin. 
		
	For all $0<s<1$, in the specific scaling $\beta = -2s$, the same formula of $g(\tht)$ defines a weak solution to \eqref{eq:SgSQG}, which we shall also refer to as the Bahouri--Chemin patch.
    For all $0<s<1$, Theorem \ref{t:thm} gives the existence of a continuous family of steady states where $\beta \to -2s$. It would be very interesting to prove that this family actually converges to the Bahouri--Chemin patch; when $\beta = -2s$, Theorem \ref{t:thm} also provides the non-existence of steady solutions with a regularity assumption on $g$.
	
	More generally, $0$-homogeneous patch solutions to the gSQG equations provide important candidates for algebraic spiral formation and non-uniqueness of the initial value problem, see discussions in \cite{Elling,SWZ25,GGS24,JeonJeong,CFMS}. Several recent works provide well/ill-posedness for gSQG patches with relatively low regularity \cite{KYZ17,GP21,GNP22,KL25,AA24}.

\subsubsection{{$-2s < \beta < 0$}} The range  $-2s < \beta < 0$ precisely corresponds to the case when $|\omega(x)|\to\infty, |\psi(x)|\to 0$ as $|x|\to 0$. We prove that, for $-2s \le \beta \le 0$, there cannot exist homogeneous solutions with $g$ regular in $\theta$, under the odd symmetry. We remark that even in this parameter range, there could exist homogeneous solutions with very low regularity in $\theta$. In the Euler and the 2D Boussinesq system, such low regularity homogeneous solutions were constructed in \cite{EH,AGJ} relying on the local nature of the resulting equations on $\mathbb{T}$. It seems like a very challenging problem to extend these results to the gSQG case.

\subsubsection{{$\beta = 0$ ($0$-homogeneous stream function)}} Lastly, Theorem \ref{t:thm} gives the existence of homogeneous solutions in the very singular regime $\bt > 0$, where $|\psi(x)|\to\infty$ as $|x|\to 0$. For $1/2 \le s < 1$, Theorem \ref{t:thm} gives the existence for arbitrarily small $\bt>0$, and one may study the limit of these homogeneous solutions when $\bt\to0^+$. This is a singular limit, since Theorem \ref{t:thm} also shows the non-existence of a homogeneous solution which is smooth in $\tht$ for $\bt = 0$. 
	
This ``desingularization'' problem was solved in the Euler case $s = 1$ in \cite[\S 1.2.5, Theorem 1.10]{AGJ}: there is a sequence of homogeneous Euler solutions $\bt\to0^+$ converging to the vortex sheet $\omg_{sh} := r^{-2}\dlt_{\pi/2}(\tht)$, where $\dlt_{\pi/2}$ denotes the Dirac delta distribution at $\tht = \pi/2$. This defines a weak steady solution to Euler in $\bbR^2\backslash \{ 0\}$, see \cite[Chapter 9]{MaB}. Furthermore, this singular solution was considered in the astrophysics literature in relation to current sheet formation \cite{LB94,Aly94}.

\subsection{Ideas of the proof}

We study a homogeneous version of the stationary equations \eqref{eq:SgSQG} for the profile $(w,g)$ on the torus $\mathbb{T}$: 

\begin{equation}\label{eq:HSgSQG}
\begin{aligned}
\beta w \partial_{\theta}g =(\beta+2s)g\partial_{\theta}w,\quad  
\mathcal{L}(s,\beta)w=g,\quad  0<s<1,
\end{aligned}
\end{equation}\\
We first develop (i) a linear theory for the one-dimensional non-local operator $\mathcal{L}(s,\beta)$ involving the parameter $\beta$ (without restricting it to odd functions). We then apply it to show (ii) the non-existence of odd symmetric solutions to \eqref{eq:HSgSQG} and (iii) the existence of odd symmetric solutions to \eqref{eq:HSgSQG}.

\subsubsection{The polar-mode formula}

We first compute the fractional Laplace operator for homogeneous functions by the Hankel transform and derive  a closed‑form spectral formula for $(-\Delta)^s$
 on homogeneous modes: the polar-mode formula

 \begin{align}
(-\Delta)^{s}\!\bigl(r^{-\beta} e^{i m \theta}\bigr)
= \mu_{|m|}(s,\beta) r^{\,-\beta-2s}\, e^{i m \theta},\quad \beta<2,  \label{eq: EV0} 
\end{align}\\
with the constant 

\begin{align}
\mu_{|m|}(s,\beta)
= 2^{2s}\frac{\Gamma(\frac{|m|+\beta}{2}+s )\Gamma(\frac{|m|-\beta}{2}+1 )}{\Gamma(\frac{|m|+\beta}{2})\Gamma(\frac{|m|-\beta}{2}+1-s )}.  \label{eq: C0}
\end{align}\\
The formula \eqref{eq: EV0} includes the classical examples $\mu_{|m|}(1,\beta)=m^{2}-\beta^{2}$ and $\mu_{|m|}(1/2,-2)=\frac{|m|(|m|^{2}-4)}{|m|^{2}-1}$ \cite{EJ20b} and shows that generically the constant $\mu_{|m|}(s,\beta)$ has finite poles and zeros as a function of $|m|$; see Figure \ref{fig:5}. We show that $\mu_{|m|}(s,\beta)$ is increasing from the largest pole to infinity and has the asymptotics $|m|^{-2s}\mu_{|m|}(s,\beta)\to 1$ as $|m|\to\infty$. Simultaneously, formula \eqref{eq: C0} yields the optimal intervals $-|m|-2s < \beta < |m| + 2$ over which $\mu_{|m|}(s,\beta)$ remains finite, and $-|m| < \beta < |m| + 2 - 2s$, over which $\mu_{|m|}(s,\beta)$ is positive, for each Fourier mode $m \in \mathbb{Z}$; see Figure \ref{fig:6}. We remark that in \cite[Proposition A.1]{GGS24} an alternative, much more complicated formula of \eqref{eq: EV0} was provided in terms of an integral.

The interval $-|m|-2s < \beta < |m| + 2$ becomes larger for $|m|$. Namely, for $m_0\in \mathbb{N}_0=\mathbb{N}\cup\{0\}$ and $-m_0-2s < \beta < m_0 + 2$, the constant $\mu_{|m|}(s,\beta)$ remains finte for all $|m|\geq m_0$. We thus consider the space of high-frequency functions $H^{2s}_{m_0}(\mathbb{T})$ with vanishing Fourier modes $|m|<m_0$ and define the nonlocal $2s$-th order differential operator on the torus by using the finite symbols $\mu_{|m|}(s,\beta)$:

\begin{align}
\mathcal{L}(s,\beta)w=\sum_{|m|\geq m_0}\mu_{|m|}(s,\beta)\hat{w}_me^{im\theta},\quad \hat{w}_m=\frac{1}{2\pi}\int_{\mathbb{T}}w(\theta)e^{-im\theta}d\theta.  \label{eq:Formula}
\end{align}\\
This formula~\eqref{eq:Formula} extends the classical definition of the operator $r^{\beta+2s}(-\Delta)^{s}(r^{-\beta}\,\cdot)$ to large values of $|\beta|$, and constitutes a key ingredient in the analysis of the non-linear problem \eqref{eq:HSgSQG}. We establish the following basic properties of the operator $\mathcal{L}(s,\beta)$: 

\begin{itemize}
\item The inverse operator $\mathcal{L}(s,\beta)^{-1}$ exists for $-m_0<\beta<m_0+2-2s$
\item The associated bilinear form $B(\cdot,\cdot)$ is a bounded operator on $H^{s}_{m_0}(\mathbb{T})\times H^{s}_{m_0}(\mathbb{T})$.
\item The $L^{p}$ and H\"older regularity estimates hold.
\end{itemize}

\subsubsection{Non-existence}

We show the non-existence of odd symmetric solutions to \eqref{eq:HSgSQG} for $-2s< \beta< 0$ based on the idea in the works on axisymmetric homogeneous Euler equations \cite{Shv}, \cite{Abe11}. We first integrate the first equation of \eqref{eq:HSgSQG} and show that $wg=0$ on $\mathbb{T}$ and reduce to the irrotational case $g=0$. However, unlike the Euler equations, the operator $\mathcal{L}(s,\beta)$ is non-local and $g=0$ does not immediately follow from $wg=0$. We thus consider the equations for Fourier modes $(m^{2}-\beta^{2})\hat{w}_m=K_{|m|}(s,\beta)^{-1}\hat{g}_m$ by using some positive symbol $K_{|m|}(s,\beta)$ and show $g=0$ by Parseval's identity. On the physical side, the factrization $\mu_{|m|}(s,\beta)=K_{|m|}(s,\beta)(m^{2}-\beta^{2})$ means the decomposition of the operator $\mathcal{L}(s,\beta)=\mathcal{K}(s,\beta)(-L(\beta))$ for $L(\beta)=\partial_{\theta}^{2}+\beta^{2}$ and some operator $\mathcal{K}(s,\beta)$. We consider the equation with a local operator $-L(\beta)w=\mathcal{K}(s,\beta)^{-1}g$ and integrate $0=(-L(\beta)w)g=(\mathcal{K}(s,\beta)^{-1}g)g$ on $\mathbb{T}$. We apply similar ideas for the endpoint cases $\beta=-2$ and $0$.

\subsubsection{Existence}
The main task of this study is to show the existence of odd symmetric solutions to \eqref{eq:HSgSQG} for $\beta<-2s$ and $0<\beta$. We choose $g=cw|w|^{\frac{2s}{\beta}}$ with a constant $c>0$ so that $(w,g)$ are odd symmetric and deduce the existence of solutions to \eqref{eq:HSgSQG} from the semilinear elliptic problem:  

\begin{align*}
\mathcal{L}(s,\beta)w=cw|w|^{\frac{2s}{\beta}}.
\end{align*}\\
We seek critical points of the functional

 \begin{align*}
I[w]
=\frac{1}{2}B(w,w)-\frac{c\beta }{2(\beta+s)}\int_{\mathbb{T}}|w|^{2+\frac{2s}{\beta}}d \theta, 
\end{align*}\\
on the odd symmetric high-frequency fractional Sobolev space $X=H^{s}_{m_0,\textrm{odd}}(\mathbb{T})$ with Fourier modes larger than $m_0\in \mathbb{N}$. We show the embedding $H^{s}(\mathbb{T})\subset L^{2+\frac{2s}{\beta}}(\mathbb{T})$ and $I\in C^{1}(X; \mathbb{R})$ (for $1/2-s<\beta$ in the case $0<s<1/2$). We apply $L^{p}$ and H\"older regularity estimates established in this work for critical points $w\in H^{s}_{m_0}(\mathbb{T})$ with $g=cw|w|^{\frac{2s}{\beta}}\in H^{-s}_{m_0}(\mathbb{T})$ and show the desired regularity of the critical points (stated in Theorem \ref{t:thm}) by an iterative argument. We restrict the parameter $\beta$ to the following ranges: \\

\noindent
(a) $-m_0-2s<\beta<m_0+2$ for the usage of the linear operator $\mathcal{L}(s,\beta)$ \\
(b) $\beta<-2s$ and $0<\beta$ for the positive nonlinearity\\
(c) $1/2-s<\beta$ in the case $0<s<1/2$ for the embedding $H^{s}(\mathbb{T})\subset L^{2+\frac{2s}{\beta}}(\mathbb{T})$\\

We apply the following four variational principles for $I$ (see Figure \ref{fig:4})\\

\noindent
(i) Minimizing method: $-m_0<\beta<-2s$\\
(ii) Mountain pass theorem: $0<\beta<m_0+2-2s$ ($1/2-s<\beta<m_0+2-2s$ in the case $0<s<1/2$)\\
(iii) Linking theorem: $m_0+2-2s\leq \beta<m_0+2$\\
(iv) Saddle-point theorem: $-m_0-2s<\beta\leq -m_0$\\

\begin{figure}[h]
  \begin{minipage}[b]{0.45\linewidth}
\hspace{-118pt}
\includegraphics[scale=0.195]{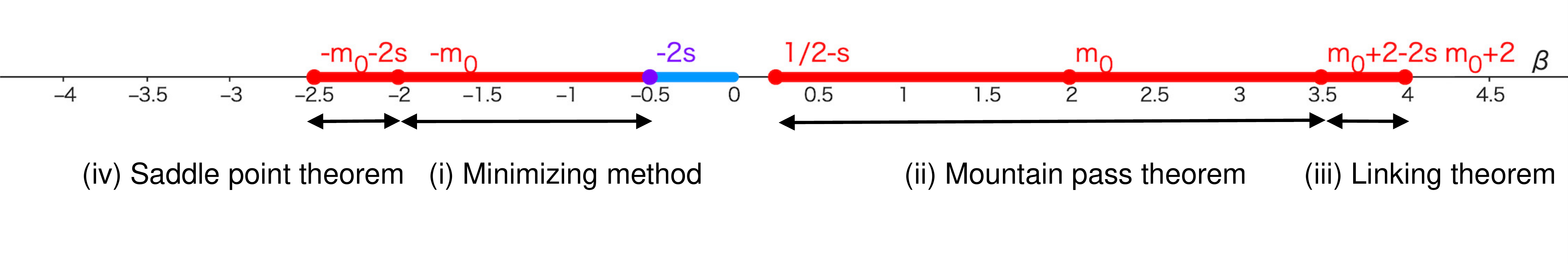}
  \end{minipage}
    \vspace{-10pt}
  \caption{The ranges of $\beta$ and variational principles. Case $s=1/4$ and $m_0=2$}\label{fig:4}
\end{figure}

Those divisions (i)-(iv) are due to signs of eigenvalues of the operator $\mathcal{L}(s,\beta)$ and sub/superlinear nonlinearity. We show the Palais--Smale condition and functional estimates on proper subsets and apply those variational principles. 

\subsection{Organization of the paper} The rest of the paper is organized as follows.
\S \ref{s:2} derives the polar-mode formula and investigates the properties of the constant $\mu_{|m|}(s,\beta)$. \S \ref{s:3} defines the operator $\mathcal{L}(s,\beta)$ and the associated bilinear form on the high-frequency space. \S \ref{s:4} establishes the regularity estimates for the operator $\mathcal{L}(s,\beta)$ on $L^{p}$ and H\"older spaces. \S \ref{s:5} shows the non-existence of odd symmetric solutions to \eqref{eq:HSgSQG} in Theorem \ref{t:thm}. \S \ref{s:6} shows the existence of odd symmetric solutions to \eqref{eq:HSgSQG} by variational principles and completes the proof of Theorem \ref{t:thm}. \S \ref{s:7} performs numerical computations of the aforementioned solutions. Appendix \ref{a:A} shows an increasing property of the ratio of the gamma functions used in \S \ref{s:2}. Appendix \ref{a:B} shows the Sobolev inequality on the torus.

\subsection{Acknowledgments}
KA has been supported by the JSPS through the Grant in Aid for Scientific Research (C) 24K06800, MEXT Promotion of Distinctive Joint Research Center Program JPMXP0723833165, and Osaka Metropolitan University Strategic Research Promotion Project (Development of International Research Hubs).  JGS has been partially supported by the MICINN
(Spain) research grant number PID2021--125021NA--I00, by NSF under Grants
DMS-2245017, DMS-2247537 and DMS-2434314, by the AGAUR project 2021-SGR-0087 (Catalunya) and by a Simons Fellowship. This material is based upon work supported by a grant from the Institute for Advanced Study School of Mathematics. IJ has been supported by the NRF grant from the Korea government (MSIT), RS-2024-00406821,  2022R1C1C1011051 and by the Asian Young Scientist Fellowship. IJ acknowledges the hospitality of the Mathematics Department at Brown University during his April 2024 visit, when this work was initiated.

Upon completion of this work, we learned about the paper \cite{miguel} by Pascual-Caballo, where the author proves similar results in different settings. We  coordinated so that both papers appeared simultaneously on the arXiv.

\section{The fractional Laplacian for homogeneous functions} \label{s:2}

We show that the constant $\mu_{|m|}(s,\beta)$ of the polar-mode formula \eqref{eq: C0} has poles and zeros as a function of $|m|$ and $\mu_{|m|}(s,\beta)$ is increasing from the largest pole to infinity and has the asymptotics $|m|^{-2s}\mu_{|m|}(s,\beta)\to 1$ as $|m|\to\infty$. We then derive optimal intervals $-|m|-2s<\beta<|m|+2$ over which $\mu_{|m|}(s,\beta)$ is finite and $-|m|<\beta<|m|+2-2s$ over which $\mu_{|m|}(s,\beta)$ is positive for fixed Fourier mode $m\in \mathbb{Z}$. At the end of this section, we prove the polar-mode formula \eqref{eq: EV0} by the Hankel transform.

\subsection{The polar-mode formula}

We first observe properties of the constant $\mu_{|m|}(s,\beta)$.

\begin{lem}\label{l: FML}
Let $0<s<1$, $m\in \mathbb{Z}$, and $\beta\in \mathbb{R}$. Then, for $\beta$ not leading to Gamma-function singularities (see Proposition \ref{p: poleszerosm} below),

\begin{align}
(-\Delta)^{s}\!\bigl(r^{-\beta} e^{i m \theta}\bigr)
&= \mu_{|m|}(s,\beta) r^{\,-\beta-2s}\, e^{i m \theta},\quad \beta<2,  \label{eq: EV} \\
(-\Delta)^{-s}\!\bigl(r^{-\beta-2s} e^{i m \theta}\bigr)
&= \mu_{|m|}(s,\beta)^{-1} r^{\,-\beta}\, e^{i m \theta},\quad \beta<2-2s,  \label{eq: IEV}
\end{align}\\
hold with the constant 

\begin{align}
\mu_{|m|}(s,\beta)
= 2^{2s}\frac{\Gamma(\frac{|m|+\beta}{2}+s )\Gamma(\frac{|m|-\beta}{2}+1 )}{\Gamma(\frac{|m|+\beta}{2})\Gamma(\frac{|m|-\beta}{2}+1-s )}.  \label{eq: C}
\end{align}
\end{lem}

\vspace{5pt}

\begin{rems}
The constant \eqref{eq: C} includes the following examples:\\

\begin{itemize}
\item Case $s=1$; $\mu_{|m|}(1,\beta)=|m|^{2}-\beta^{2}$.
\item Case $s=0$; $\mu_{|m|}(0,\beta)=1$.
\item Case $s=1/2$; Elgindi and Jeong \cite[Lemma 4.5]{EJ20b} computed the symbol of the constitutive law for scale-invariant solutions ($\beta=-2$) to the SQG equations. The formula \eqref{eq: C} provides the same symbol 
\begin{align*}
\mu_{|m|}(1/2,-2)
= 2\frac{\Gamma(\frac{|m|-1}{2} )\Gamma(\frac{|m|+4}{2} )}{\Gamma(\frac{|m|-2}{2})\Gamma(\frac{|m|+3}{2})}
=\frac{|m|(|m|^{2}-4)}{|m|^{2}-1}.
\end{align*}\\
The constant $\mu_{|m|}(1/2,-2)$ has the pole at $|m|=1$ and zeros at $|m|=0$ and $2$. For $|m|>1$, $\mu_{|m|}(1/2,-2)$ is increasing and $\lim_{|m|\to\infty}|m|^{-1}\mu_{|m|}(1/2,-2)=1$; see Figure \ref{fig:5}.
\end{itemize}
\end{rems}

\begin{figure}[h]
  \begin{minipage}[b]{0.45\linewidth}
\hspace{-60pt}
\includegraphics[scale=0.65]{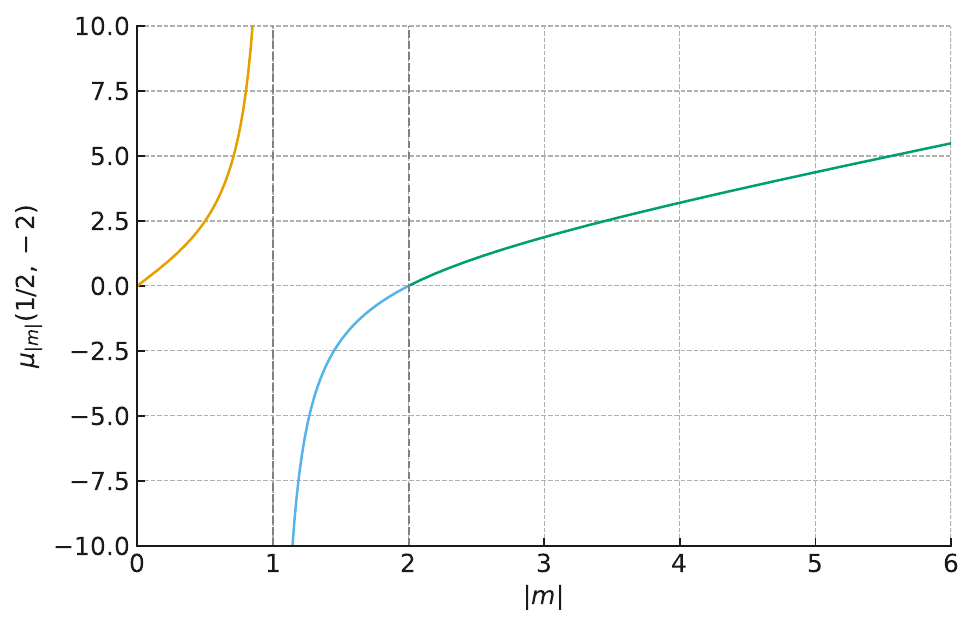}
  \end{minipage}
  \caption{The graph of $\mu_{|m|}(1/2,-2)=\frac{|m|(|m|^{2}-4)}{|m|^{2}-1}$}
  \label{fig:5}
\end{figure}


\subsection{Poles and zeros for $|m|$}

We show that generically $\mu_{|m|}(s,\beta)$ has alternate poles and zeros for $|m|>0$.

\begin{prop}[Poles and zeros for $|m|$]\label{p: poleszerosm}
Let $0<s<1$ and $\beta\in \mathbb{R}$. 

\noindent
(i) (Poles) The constant $\mu_{|m|}(s,\beta)$ diverges if and only if $|m|$ is the following values:

\begin{align}
|m|=
\begin{cases}
& -2k-\beta-2s,\quad  k\in \mathbb{N}_{0},\quad \textrm{for}\ \beta\leq -2s,\\
& -2k+\beta-2, \quad k\in \mathbb{N}_{0}, \quad \textrm{for}\ \beta\geq 2. 
\end{cases}
\label{eq: P}
\end{align}\\
In particular, $\mu_{|m|}(s,\beta)$ is finite for $-2s<\beta<2$ and $|m|\geq 0$.

\noindent
(ii) (Zeros) The constant $\mu_{|m|}(s,\beta)$ vanishes if and only if $|m|$ is the following values:

\begin{align}
|m|=
\begin{cases}
& -2k-\beta, \quad  k\in \mathbb{N}_{0},\quad \textrm{for}\ \beta\leq 0, \\
& -2k+\beta-2+2s,\quad  k\in \mathbb{N}_{0},\quad \textrm{for}\ \beta\geq 2-2s.
\end{cases}
\label{eq: Z}
\end{align}\\
In particular, $\mu_{|m|}(s,\beta)$ is positive for $0<\beta<2-2s$ and $|m|\geq 0$.
\end{prop}

\vspace{5pt}

\begin{proof}
The constant $\mu_{|m|}(s,\beta)$ diverges if and only if the numerator of \eqref{eq: C} does by the poles of the Gamma function, i.e., $\frac{|m|+\beta}{2}+s \notin -\mathbb{N}_0$ and $\frac{|m|-\beta}{2}+1\notin -\mathbb{N}_0$. The constant $\mu_{|m|}(s,\beta)$ vanishes if and only if the denominator of \eqref{eq: C} diverges by the poles of the Gamma function, i.e., $\frac{|m|-\beta}{2}+1-s \in -\mathbb{N}_0$ and $\frac{|m|+\beta}{2} \in -\mathbb{N}_0$. 
\end{proof}

\subsection{Monotonicity and asymptotics for $|m|$}

We show that $\mu_{|m|}(s,\beta)$ is increasing for $|m|$ larger than the largest pole in \eqref{eq: P} and has the asymptotics $|m|^{-2s}\mu_{|m|}(s,\beta)\to 1$ as $|m|\to\infty$ by using properties of the digamma function $\psi(z)=\Gamma'(z)/\Gamma(z)$; see Figure \ref{fig:5}.

\vspace{5pt}

\begin{prop}[Monotonicity]\label{p: monotonem}
The constant $\mu_{|m|}(s,\beta)$ is increasing for 

\begin{align}
|m|> \begin{cases}
& -\beta-2s,\quad \textrm{for}\ \beta\leq -2s,   \\
& \beta-2,\quad \textrm{for}\ \beta\geq 2,\\
& 2s,\quad \textrm{for}\ -2s<\beta<2.
\end{cases}
\label{eq: mufinite}
\end{align}
\end{prop}

\begin{proof}
We set

\begin{align*}
\mu_{|m|}(s,\beta)=2^{2s}\frac{\Gamma(t+s)\Gamma(l+s)}{\Gamma(t)\Gamma(l)},\quad t=\frac{|m|+\beta}{2},\ l=\frac{|m|-\beta}{2}+1-s.
\end{align*}\\
For $\beta\leq -2s$ and $|m|>-\beta-2s$, we observe that $t>-s$ and set $f(t)=\Gamma(t+s)/\Gamma(t)$. By properties of the digamma function, the function $f(t)$ is increasing (We give a proof in Lemma \ref{l: monotonegamma}). Moreover, the function  

\begin{align*}
\frac{\Gamma(t+s)\Gamma(l+s)}{\Gamma(t)\Gamma(l)}=f(t)f(t-\beta+1-s)
\end{align*}\\
is also increasing for $t>-s$ by Lemma \ref{l: monotonegamma2}. Thus, $\mu_{|m|}(s,\beta)$ is increasing for $|m|>-\beta-2s$. 

Similarly, for $\beta\geq 2$ and $|m|>\beta-2$, we see that $l>-s$ and $f(l+\beta-1+s)f(l)$ is increasing by Lemma \ref{l: monotonegamma2}. Thus $\mu_{|m|}(s,\beta)$ is increasing for $|m|>\beta-2$. For $-2s<\beta<2$ and $|m|>0$, $t+s>|m|/2>0$ and $l+s>|m|/2>0$. By Lemma \ref{l: monotonegamma}, both $f(t)$ and $f(l)$ are positive and increasing. Thus, $f(t)f(t-\beta+1-s)$ is also increasing and so is $\mu_{|m|}(s,\beta)$ for $|m|>0$.
\end{proof}

\vspace{5pt}

\begin{prop}[Asymptotics]\label{p: asymptoticsm}
The constant $\mu_{|m|}(s,\beta)$ has the following asymptotics:

\begin{align}
\lim_{|m|\to \infty}|m|^{-2s}\mu_{|m|}(s,\beta)&=1,   \label{eq: A1}\\
\lim_{|m|\to \infty}|m|^{-2s+1}\frac{d}{d |m|}\mu_{|m|}(s,\beta)&=2s,  \label{eq: A2} \\
\lim_{|m|\to \infty}|m|^{-2s+1}\frac{d}{d \beta}\mu_{|m|}(s,\beta)&=0,  \label{eq: A3}  \\
\lim_{|m|\to \infty}|m|^{-2s+2}\frac{d}{d |m|}\frac{d}{d \beta}\mu_{|m|}(s,\beta)&=0.  \label{eq: A4}  
\end{align}
\end{prop}

\vspace{5pt}

\begin{proof}
We may assume that $\mu_{|m|}(s,\beta)$ is positive by Proposition \ref{p: monotonem}. By integrating the identity,

\begin{align*}
\frac{d}{dz}\log\left(\frac{\Gamma(z+s)}{\Gamma(z)}\right)=\psi(z+s)-\psi(z),\quad z>0,
\end{align*}\\
we find that

\begin{align*}
\frac{\Gamma(z+s)}{\Gamma(z)}=\exp\left(\int_{z}^{z+s}\psi(\eta)d\eta \right).
\end{align*}\\
We thus obtain 

\begin{align*}
\mu_{|m|}(s,\beta)=2^{2s}\exp\left(\int_{\frac{\beta}{2}}^{\frac{\beta}{2}+s}\psi\left(\frac{|m|}{2}+\eta\right)d\eta+\int_{\frac{-\beta}{2}+1-s}^{\frac{-\beta}{2}+1}\psi\left(\frac{|m|}{2}+\eta\right)d\eta  \right). 
\end{align*}\\
The asymptotics \eqref{eq: A1} follows from that of the digamma function $\psi(z)=\log{z}-(2z)^{-1}+O(z^{-2})$ as $z\to\infty$ \cite[6.3.18]{AbS} and  
\begin{align*}
\int_{a}^{a+s}\psi(t+\eta)d\eta=s\log{t}+o(1)\quad \textrm{as}\ t\to\infty,
\end{align*}\\
for $a=\beta/2$ and $-\beta/2+1-s$. By differentiating $\log \mu_{|m|}(s,\beta)$ for $|m|$,    

\begin{align*}
2\mu_{|m|}(s,\beta)^{-1}\frac{d}{d|m|}\mu_{|m|}(s,\beta)=\psi\left(\frac{|m|}{2}+\frac{\beta}{2}+s\right)-\psi\left(\frac{|m|}{2}+\frac{\beta}{2}\right)+\psi\left(\frac{|m|}{2}-\frac{\beta}{2}+1\right)-\psi\left(\frac{|m|}{2}-\frac{\beta}{2}+1-s\right).
\end{align*}\\ 
By the asymptotics of the digamma function, 

\begin{align*}
\lim_{|m|\to\infty}\frac{|m|}{2}\left(\psi\left(\frac{|m|}{2}+\frac{\beta}{2}+s\right)-\psi\left(\frac{|m|}{2}\right)\right)=s.
\end{align*}\\
The asymptotics \eqref{eq: A2} follows from \eqref{eq: A1}. Similarly, by differentiating $\log \mu_{|m|}(s,\beta)$ for $\beta$,    

\begin{align*}
2\mu_{|m|}(s,\beta)^{-1}\frac{d}{d\beta}\mu_{|m|}(s,\beta)=\psi\left(\frac{|m|}{2}+\frac{\beta}{2}+s\right)-\psi\left(\frac{|m|}{2}+\frac{\beta}{2}\right)-\psi\left(\frac{|m|}{2}-\frac{\beta}{2}+1\right)+\psi\left(\frac{|m|}{2}-\frac{\beta}{2}+1-s\right),
\end{align*}\\
and \eqref{eq: A3} follows from \eqref{eq: A1}. By differentiating this identity for $|m|$, 

\begin{align*}
&4\mu_{|m|}(s,\beta)^{-1}\frac{d}{d|m|}\mu_{|m|}(s,\beta)\frac{d}{d\beta}\mu_{|m|}(s,\beta)
+4\mu_{|m|}(s,\beta)^{-1}\frac{d^{2}}{d|m|d\beta}\mu_{|m|}(s,\beta) \\
&=
\psi'\left(\frac{|m|}{2}+\frac{\beta}{2}+s\right)-\psi'\left(\frac{|m|}{2}+\frac{\beta}{2}\right)-\psi'\left(\frac{|m|}{2}-\frac{\beta}{2}+1\right)+\psi'\left(\frac{|m|}{2}-\frac{\beta}{2}+1-s\right).
\end{align*}\\
By the asymptotics of the trigamma function $\psi'(z)=z^{-1}+2z^{-2}+O(z^{-3})$ as $z\to\infty$ \cite[6.4.12]{AbS}, the right-hand side equals $o(|m|^{-2})$ as $|m|\to\infty$. The asymptotics \eqref{eq: A4} follows from \eqref{eq: A1}-\eqref{eq: A3}.
\end{proof}

\subsection{The finite and positive intervals for $\beta$}

We next consider poles and zeros of $\mu_{|m|}(s,\beta)$ for $\beta$.

\begin{prop}[Poles and zeros for $\beta$]\label{l: poleszerosbeta}
Let $0<s<1$ and $m\in \mathbb{Z}$. 

\noindent
(i) (Poles) The constant $\mu_{|m|}(s,\beta)$ diverges if and only if $\beta$ is the following values:
\begin{align}
\beta=
\begin{cases}
& -|m|-2k-2s, \ k\in \mathbb{N}_{0},\\
& |m|+2k+2,\ k\in \mathbb{N}_{0}. 
\end{cases}
\label{eq: Pb}
\end{align}
(ii) (Zeros) The constant $\mu_{|m|}(s,\beta)$ vanishes if and only if $\beta$ is the following values:

\begin{align}
\beta=
\begin{cases}
& |m|+2k+2-2s,\ k\in \mathbb{N}_{0},\\
& -|m|-2k,\ k\in \mathbb{N}_{0},
\end{cases}
\label{eq: Zb}
\end{align}
\end{prop}

\vspace{5pt}

\begin{proof}
The proof follows the same way as that of Proposition \ref{p: poleszerosm}.
\end{proof}

\vspace{5pt}

We show that $\mu_{|m|}(s,\beta)$ is finite on the interval $-|m|-2s<\beta<|m|+2$ and  positive on the interval $-|m|<\beta<|m|+2-2s$; see Figure \ref{fig:6} 

\begin{lem}[The finite and positive intervals]\label{l: intervals}
Let $0<s<1$ and $m\in \mathbb{Z}$. 

\noindent
(i) The interval

\begin{align}
-|m|-2s<\beta<|m|+2   \label{eq: finite}
\end{align}\\
does not include poles of $\mu_{|m|}(s,\beta)$. 

\noindent
(ii) The interval

\begin{align}
-|m|<\beta<|m|+2-2s   \label{eq: positive}
\end{align}\\
does not include zeros of $\mu_{|m|}(s,\beta)$. The constant $\mu_{|m|}(s,\beta)$ is positive on the interval \eqref{eq: positive}.

\noindent
(iii) Set

\begin{align}
\mu_{|m|}(s,\beta)=\kappa_{|m|}(s,\beta)(|m|-\beta+2-2s)(|m|+\beta).   \label{eq: kappa}
\end{align}\\
The constant $\kappa_{|m|}(s,\beta)$ is positive on the interval \eqref{eq: finite} and 

\begin{align}
\mu_{|m|}(s,\beta)\begin{cases}
\ &>0,\quad -|m|<\beta<|m|+2-2s,\\
\ &=0,\quad \beta=-|m|, |m|+2-2s,\\
\ &<0,\quad -|m|-2s<\beta< -|m|,\ |m|+2-2s < \beta<|m|+2.
\end{cases}
\label{eq: Sign}
\end{align}
\end{lem}

\vspace{5pt}

\begin{proof}
The interval \eqref{eq: finite} does not include the poles \eqref{eq: Pb}. The interval \eqref{eq: positive} does not include zeros \eqref{eq: Zb}. By the property of the Gamma function $\Gamma(z+1)=z\Gamma(z)$,

\begin{align*}
\mu_{|m|}(s,\beta)&=2^{2s-2}\frac{\Gamma(\frac{|m|+\beta}{2}+s )\Gamma(\frac{|m|-\beta}{2}+1 )}{\Gamma(\frac{|m|-\beta}{2}+2-s )\Gamma(\frac{|m|+\beta}{2}+1) }(|m|-\beta+2-2s)(|m|+\beta) \\
&=\kappa_{|m|}(s,\beta)(|m|-\beta+2-2s)(|m|+\beta).
\end{align*}\\
The constant $\kappa_{|m|}(s,\beta)$ is positive on the interval \eqref{eq: finite} and the sign of $\mu_{|m|}(s,\beta)$ agrees with that of the quadratic function $(|m|-\beta+2-2s)(|m|+\beta)$ for $\beta$. 
\end{proof}

\vspace{5pt}

\begin{figure}[h]
  \begin{minipage}[b]{0.45\linewidth}
\hspace{-110pt}
\includegraphics[scale=0.65]{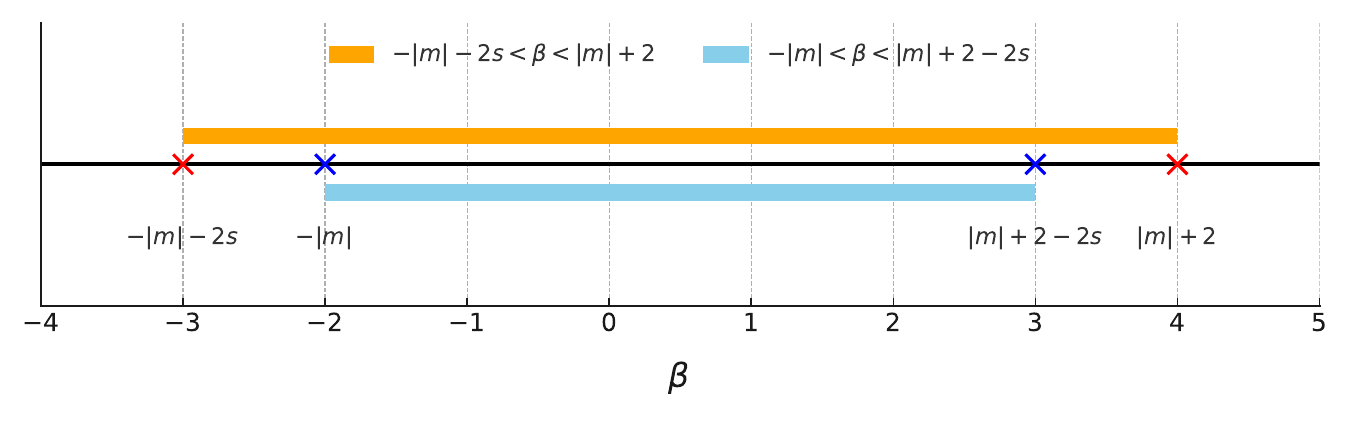}
  \end{minipage}
  \caption{The two intervals of $\beta$ in Lemma \ref{l: intervals} (for $s=1/2$ and $|m|=2$): (i) $-|m|-2s<\beta<|m|+2$ (Orange) over which $\mu_{|m|}(s,\beta)$ is finite and (ii) $-|m|-2s<\beta<|m|+2-2s$ (Lightblue) over which $\mu_{|m|}(s,\beta)$ is positive. The function $\mu_{|m|}(\beta,s)$ has poles at $\beta=|m|+2$ and $-|m|-2s$ (Red) and zeros at $\beta=|m|+2-2s$ and $-|m|$ (Blue).}
  \label{fig:6}
\end{figure}

\subsection{The Hankel transform}
We now prove the polar-mode formulas \eqref{eq: EV} and \eqref{eq: IEV} by using the Hankel transform, e.g., \cite[II, 15]{Deb}. We compute the constant \eqref{eq: C} by the Fourier transform and the inverse Fourier transform

\begin{align*}
\hat{f}(\xi)=\frac{1}{2\pi}\int_{\mathbb{R}^{2}}f(x)e^{-ix\cdot \xi}dx,\quad \check{f}(x)=\frac{1}{2\pi}\int_{\mathbb{R}^{2}}f(\xi)e^{ix\cdot \xi}d\xi.
\end{align*}\\
We use $\check{f}(x)=\hat{f}(-x)$ to compute the inverse Fourier transform from the Fourier transform and the symbol representation of the fractional operator

\begin{align*}
(-\Delta)^{s}f=(|\xi|^{2s} \hat{f})\ \check{}.
\end{align*}\\
We compute the fractional operator for separation variable functions by the Hankel transform. We use the generating function of the $m$-th order Bessel function of the first kind $J_m(x)$ \cite[9.1.41]{AbS}:

\begin{align*}
e^{\frac{x}{2}(z-\frac{1}{z})}=\sum_{m\in \mathbb{Z}}J_m(x)z^{m},\quad z\neq 0. 
\end{align*}
By taking $z=e^{i\theta}$, 
\begin{align*}
e^{i x \sin\theta}=\sum_{m\in \mathbb{Z}}J_m(x)e^{i m\theta},\quad J_m(x)=\frac{1}{2\pi}\int_{0}^{2\pi}e^{ix \sin\theta}e^{-im\theta}d\theta.
\end{align*}\\
The Hankel transform of the radial function $g(r)\in L^{1}_{\textrm{loc}}[0,\infty)$ is as follows:
\begin{align*}
H_{|m|}[g](\rho)=\int_{0}^{\infty}g(r)rJ_{|m|}(\rho r)dr.
\end{align*}
For $g(r)=r^{-\beta}$, the trace at $\rho=1$ yields the following constant \cite[11.4.16]{AbS}:
\begin{align*}
H_{|m|}[r^{-\beta}](1)=\int_{0}^{\infty}r^{-\beta+1}J_{|m|}(r)dr=\frac{2^{-\beta+1}\Gamma(\frac{|m|-\beta+2}{2})}{\Gamma(\frac{|m|+\beta}{2})},\quad -\frac{1}{2}<\beta<|m|+2.
\end{align*}\\
For negative $m$, we set $H_m[g]=(-1)^{m}H_{|m|}[g]$.

\begin{prop}
Let $0<s<1$ and $m\in \mathbb{Z}$. Let $x=re^{i\theta}$ and $\xi=\rho e^{i\varphi}$. Then, 

\begin{align}
\widehat{(g(r)e^{im\theta})}(\xi)&=(-i)^{m}H_m[g](\rho)e^{im\varphi}, \label{eq: FTH}\\
(g(\rho)e^{im\varphi})\ \check{}\ (x)&=(i)^{m}H_m[g](r)e^{im\theta}, \label{eq: IFTH}\\
(-\Delta)^{s}(g(r)e^{im\theta})&=H_{|m|}[\rho^{2s}H_{|m|}[g]](r)e^{im\theta},  \label{eq: FOH}
\end{align}\\
for $g(r)\in L^{1}_{\textrm{loc}}[0,\infty)$.
\end{prop}

\begin{proof}
We observe that 

\begin{align*}
\hat{f}(\xi)=\frac{1}{2\pi}\int_{\mathbb{R}^{2}}f(x)e^{-ix\cdot \xi}dx
=\frac{1}{2\pi}\int_{0}^{\infty}\int_{0}^{2\pi}e^{im\theta}g(r) e^{-ir\rho \cos(\theta-\varphi)}r d\theta dr. 
\end{align*}\\
By changing the variable $\alpha=\theta-\varphi-\pi/2$ and using the symmetry $J_{-m}(-x)=J_m(x)$, 

\begin{align*}
\frac{1}{2\pi}\int_{0}^{\infty}\int_{0}^{2\pi}e^{im\theta}g(r) e^{-ir\rho \cos(\theta-\varphi)}r d\theta dr 
&=\frac{1}{2\pi}\int_{0}^{\infty}\int_{0}^{2\pi}e^{im(\alpha-\frac{\pi}{2}+\varphi )}g(r) e^{-ir\rho \sin\alpha}r d\alpha dr \\
&=(-i)^{m}e^{im\varphi}\int_{0}^{\infty}J_{-m}(-r\rho)g(r)rdr=(-i)^{m}H_m[g](\rho)e^{im\varphi}.
\end{align*}\\
We obtain \eqref{eq: FTH}. The inverse Fourier transform \eqref{eq: IFTH} follows from \eqref{eq: FTH}. The expression \eqref{eq: FOH} follows from \eqref{eq: FTH} and \eqref{eq: IFTH}.
\end{proof}

\begin{proof}[Proof of Lemma \ref{l: FML}]
We show the formula \eqref{eq: EV}. The same computation yields \eqref{eq: IEV}. For $-1/2<\beta<\min\{5/2-2s,2\}$, we take $m$ so that $-1/2<\beta<|m|+2$ and $-|m|-2s<\beta<5/2-2s$. By the scaling,  

\begin{align*}
H_{|m|}[r^{-\beta}](\rho)&=\int_{0}^{\infty}r^{-\beta+1}J_{|m|}(r\rho)dr=\frac{1}{\rho^{-\beta+2}}H_{|m|}[r^{-\beta}](1),\\
H_{|m|}[\rho^{2s+\beta-2}](r)&=\frac{1}{r^{2s+\beta}}H_{|m|}[\rho^{2s+\beta-2}](1).
\end{align*}\\
The constants $H_{|m|}[r^{-\beta}](1)$ is finite by $-1/2<\beta<|m|+2$. The constant $H_{|m|}[\rho^{2s+\beta-2}](1)$ is finite by $-|m|-2s<\beta<5/2-2s$. By applying \eqref{eq: FOH}, 

\begin{align*}
\mu_{|m|}(s,\beta)=H_{|m|}[\rho^{2s+\beta-2}](1)H_{|m|}[r^{n}](1)
&=\frac{2^{(2s+\beta-2)+1}\Gamma(\frac{|m|+(2s+\beta-2)+2}{2})}{\Gamma(\frac{|m|-(2s+\beta-2)}{2})}\frac{2^{-\beta+1}\Gamma(\frac{|m|-\beta+2}{2})}{\Gamma(\frac{|m|+\beta}{2})} \\
&=2^{2s}\frac{\Gamma(\frac{|m|+\beta}{2}+s )\Gamma(\frac{|m|-\beta}{2}+1 )}{\Gamma(\frac{|m|-\beta}{2}+1-s )\Gamma(\frac{|m|+\beta}{2})}.
\end{align*}\\
Thus \eqref{eq: EV} holds for $-1/2<\beta<\min\{5/2-2s,2\}$ and $|m|>\max\{\beta-2,-\beta-2s\}$. The formula \eqref{eq: EV} is extendable for all $0<s<1$, $\beta<2$, and $m\in \mathbb{Z}$.
\end{proof}

\vspace{5pt}

\begin{rem}[The kernel]
The kernel of $(-\Delta)^{s}$ for homogeneous functions are of the form

\begin{equation}
\begin{aligned}
r^{-|m|-2k-2+2s}e^{im\theta}, \quad
r^{|m|+2k}e^{im\theta},\quad m\in \mathbb{Z},\ k\in \mathbb{N}_0.
\end{aligned}
\label{eq: Ker}
\end{equation}\\
The integers of the first eigenfunctions should satisfy the condition $|m|+2k+2s<0$ so that the eigenfunctions are locally integrable in $\mathbb{R}^{2}$. Indeed, the condition $(-\Delta)^{s}(r^{-\beta}e^{im\theta})=0$ implies that $\mu_{|m|}(s,\beta)=0$ and the degrees $\beta$ are given by the zeros \eqref{eq: Zb}. 
\end{rem}

\section{The non-local operators on high frequency spaces}\label{s:3}

We define the $2s$-th order non-local differential operator $\mathcal{L}(s,\beta)$ for $0<s<1$ by using the constant $\mu_{|m|}(s,\beta)$ of the polar-mode formula \eqref{eq: C}. For non-positive integers $m_0\in \mathbb{N}$ and $-m_0-2s<\beta<m_0+2$, the constant $\mu_{|m|}(s,\beta)$ is finite for all $|m|\geq m_0$ by Lemma \ref{l: intervals} and we define $\mathcal{L}(s,\beta)$ by the Fourier expansion for functions with vanishing Fourier modes for $|m|<m_0$.

\subsection{The high frequency Sobolev space}

We use the Fourier expansion

\begin{align*}
f(\theta)=\sum_{|m|\geq 0}\hat{f}_{m}e^{im\theta},\quad \hat{f}_{m}=\frac{1}{2\pi}\int_{0}^{2\pi}f(\theta)e^{-im\theta}d\theta,
\end{align*}\\
and define the $H^l$ scalar product between $f$ and $g$ by

\begin{align*}
(f,g)_{H^{l}}=2\pi \sum_{|m|\geq 0}\langle m\rangle ^{2l}\hat{f}_{m}\overline{\hat{g}_{m}},\quad l\in \mathbb{R},
\end{align*}\\
and the associated space of high-frequency functions 

\begin{align*}
H^{l}_{m_0}(\mathbb{T})=\left\{f\in H^{l}(\mathbb{T})\ \middle|\ f=\sum_{|m|\geq m_0}\hat{f}_{m}e^{im\theta}\ \right\},\quad m_0\in \mathbb{N}\cup \{0\}.
\end{align*}

\begin{prop}\label{p: betam_0}
Let $0<s<1$ and $m_0\in \mathbb{N}_0$. 

\noindent
(i) Assume that 

\begin{align}
-2s-m_0<\beta<m_0+2.   \label{eq: finitebm_0}
\end{align}\\
Then, $\beta$ belongs to the interval \eqref{eq: finite} for all $|m|\geq m_0$. In particular, $\mu_{|m|}(s,\beta)$ is finite for all $|m|\geq m_0$.\\
(ii) Assume that 

\begin{align}
-m_0<\beta<m_0+2-2s.   \label{eq: positivebm_0}
\end{align}\\
Then, $\beta$ belongs to the interval \eqref{eq: positive} for all $|m|\geq m_0$. In particular, $\mu_{|m|}(s,\beta)$ is positive for all $|m|\geq m_0$.\\
\end{prop}

\begin{proof}
This follows from Lemma \eqref{l: intervals}.
\end{proof}

\vspace{5pt}

We show that the non-positive constants $\mu_{|m|}(s,\beta)$ are at most two for $|m|\geq m_0$.

\vspace{5pt}

\begin{prop}\label{p: Symbolsign}
Let $0<s<1$ and $m_0\in \mathbb{N}_0$. Let $\beta$ satisfy \eqref{eq: finitebm_0}. Then, the following holds:\\
 
 \begin{itemize}
 \item[(i)] If $\beta$ satisfies \eqref{eq: positivebm_0}, $\mu_{m_0}(s,\beta)>0$\\
\item[(ii)] If $\beta$ does not satisfy \eqref{eq: positivebm_0}, the following holds:\\
 \begin{itemize}
 \item[(a)] For $0<s\leq \frac{1}{2}$, $\mu_{m_0}(s,\beta)\leq 0<\mu_{m_0+1}(s,\beta)$ \\
 \item[(b)] For $\frac{1}{2}<s<1$, $\mu_{m_0}(s,\beta)\leq 0<\mu_{m_0+1}(s,\beta)$ or $\mu_{m_0}(s,\beta)<\mu_{m_0+1}(s,\beta)\leq 0$
 \end{itemize}
 \end{itemize}
 \end{prop}

\vspace{5pt}

\begin{proof}
If $\beta$ satisfies \eqref{eq: positivebm_0}, $\mu_{m_0}(s,\beta)>0$ by \eqref{eq: Sign}. If $\beta$ does not satisfy \eqref{eq: positivebm_0}, $-m_0-2s<\beta\leq -m_0$ or $m_0+2-2s\leq \beta<m_0+2$ and $\mu_{m_0}(s,\beta)\leq 0$ by \eqref{eq: Sign}. We consider the case $-m_0-2s<\beta\leq -m_0$. The case $m_0+2-2s\leq \beta<m_0+2$ is parallel. The function $\mu_{|m|}(s,\beta)$ is increasing for $|m|>-\beta-2s$ and vanishes at $|m|=-\beta$. Since $m_0$ is in $(-\beta-2s,-\beta]$, $-\beta<m_0+2$ and $\mu_{m_0+2}(s,\beta)>0$. For $0<s\leq 1/2$, $-\beta<m_0+1$ and $\mu_{m_0+1}(s,\beta)>0$. 
\end{proof}

\vspace{5pt}

\subsection{The non-local $2s$-th order differential operator}

We now define the operator $\mathcal{L}(s,\beta)$ on high-frequency spaces by the constant $\mu_{|m|}(s,\beta)$ and the Fourier expansion.

\begin{defn}[The non-local operators on the torus]
Let $0<s<1$ and $m_0\in \mathbb{N}_0$.  

\noindent
(i) Let $\beta$ satisfy \eqref{eq: finitebm_0}. We set the operator $\mathcal{L}(s,\beta)$ on $H^{2s}_{m_0}(\mathbb{T})$ by

\begin{align}
\mathcal{L}(s,\beta)w= \sum_{|m|\geq m_0}\mu_{|m|}(s,\beta)\hat{w}_{m}e^{im\theta},\quad \hat{w}_m=\frac{1}{2\pi}\int_{\mathbb{T}}w(\theta)e^{-im\theta}d\theta.   \label{eq: OPL}
\end{align}\\
(ii) Let $\beta$ satisfy \eqref{eq: positivebm_0}. We set the inverse operator $\mathcal{L}(s,\beta)^{-1}$ on $L^{2}_{m_0}(\mathbb{T})$ by 

\begin{align}
\mathcal{L}(s,\beta)^{-1}g= \sum_{|m|\geq m_0}\mu_{|m|}(s,\beta)^{-1}\hat{g}_{m}e^{im\theta},\quad \hat{g}_m=\frac{1}{2\pi}\int_{\mathbb{T}}g(\theta)e^{-im\theta}d\theta.   \label{eq: OPLI}
\end{align}
\end{defn}

\begin{rems}\label{r: SP}
(i) The operator $\mathcal{L}(s,\beta)$ agrees with the operator $r^{\beta+2s}(-\Delta)^{s}\!\bigl(r^{-\beta} \cdot \bigr)$ on $H^{2s}_{m_0}(\mathbb{T})$. Indeed, For $w=e^{im\theta}$ and $|m|\geq m_0$, the identity \eqref{eq: EV} for $\beta<2$ implies 

\begin{align*}
\mathcal{L}(s,\beta)e^{im\theta}
=\mu_{|m|}(s,\beta)e^{im\theta}
=r^{\beta+2s}(-\Delta)^{s}\!\bigl(r^{-\beta} e^{i m \theta}\bigr).
\end{align*}\\
(ii) The operator $\mathcal{L}(s,\beta)$ agrees with the composition operator $-L(\beta)\mathcal{K}(s,\beta)$ for $m_0=0$ and $-2s<\beta<0$ where $L(\beta)=\partial_{\theta}^{2}+\beta^{2}$ and $\mathcal{K}(s,\beta)f=K(\cdot, s,\beta)*f$ for the kernel 

\begin{equation}
\begin{aligned}
K(\theta;s,\beta)=C(1-s)\int_{0}^{\infty}\frac{d \rho}{(\rho^{2}+1-2\rho\cos\theta )^{s}\rho^{\beta+1}}.
\end{aligned}
\label{eq:Kernel}
\end{equation}\\
Indeed, by using $(-\Delta)^{s}=(-\Delta)^{-(1-s)}(-\Delta)$ and $(-\Delta) (r^{-\beta}e^{im\theta})=(m^{2}-\beta^{2})r^{-\beta-2}e^{im\theta}$, 

\begin{align*}
\mathcal{L}(s,\beta)e^{im\theta}
&=
r^{\beta+2s}(-\Delta)^{s}(r^{-\beta}e^{im\theta}) \\
&=(m^{2}-\beta^{2})r^{\beta+2s}(-\Delta)^{-(1-s)}\left(r^{-\beta-2} e^{im\theta} \right) \\
&=(m^{2}-\beta^{2})r^{\beta+2s}C(1-s)\int_{\mathbb{R}^{2}}\frac{e^{im \theta_y}}{|x-y|^{2s}|y|^{\beta+2} }dy\\
&=(m^{2}-\beta^{2})\int_{0}^{2\pi}\left(C(1-s)\int_{0}^{\infty}\frac{d \rho}{(\rho^{2}+1-2\rho \cos(\theta-\theta'))^{s}\rho^{\beta+1} }\right)e^{im \theta'}d \theta' \\
&=(m^{2}-\beta^{2}) \mathcal{K}(s,\beta)e^{im\theta} 
=-L(\beta)\mathcal{K}(s,\beta)e^{im\theta}.
\end{align*}\\
This means that the symbol $K_{|m|}(s,\beta)$ of the operator $\mathcal{K}(s,\beta)$ satisfies

\begin{align}
\mu_{|m|}(s,\beta)=(m^{2}-\beta^{2})K_{|m|}(s,\beta).   \label{eq: muK}
\end{align}\\
By using the identity \eqref{eq: kappa},

\begin{align}
K_{|m|}(s,\beta)=\frac{\kappa_{|m|}(s,\beta)(|m|-\beta+2-2s)}{|m|-\beta}.   \label{eq: Km}
\end{align}\\
The identity \eqref{eq: muK} holds for $m$ and $\beta$ satisfying \eqref{eq: finite} though the kernel $K(\theta; s,\beta)$ is available under the condition $-2s<\beta<0$.\\ 
(iii) The operator $\mathcal{L}(s,\beta)^{-1}$ agrees with $r^{\beta}(-\Delta)^{-s}(r^{-\beta-2s}\cdot )$. Indeed, For $w=e^{im\theta}$ and $|m|\geq m_0$, the identity \eqref{eq: IEV} for $\beta<2-2s$ implies 

\begin{align*}
\mathcal{L}(s,\beta)^{-1}e^{im\theta}
=\mu_{|m|}(s,\beta)^{-1}e^{im\theta}
=r^{\beta}(-\Delta)^{-s}(r^{-\beta-2s}e^{i m \theta}).
\end{align*}\\
(iv) The operator $\mathcal{L}(s,\beta)^{-1}$ agrees with $\mathcal{K}(1-s,\beta+2s-2)$ for $m_0=0$ and $0<\beta<2-2s$. Indeed,

\begin{align*}
\mathcal{L}(s,\beta)^{-1}e^{im\theta}
&=
r^{\beta}(-\Delta)^{-s}(r^{-\beta-2s}e^{im\theta} ) \\
&=r^{\beta}C(s)\int_{\mathbb{R}^{2}}\frac{e^{im \theta_y}}{|x-y|^{2-2s}|y|^{\beta+2s} }dy\\
&=\int_{0}^{2\pi}\left(C(s)\int_{0}^{\infty}\frac{d \rho}{(\rho^{2}+1-2\rho \cos(\theta-\theta'))^{1-s}\rho^{\beta+2s-1} }\right)e^{im \theta'}d \theta' 
=\mathcal{K}(1-s,\beta+2-2s)e^{im\theta}.
\end{align*}\\
By the identity \eqref{eq: muK}, 

\begin{align}
K_{|m|}(1-s,\beta+2-2s)=\frac{1}{(m^{2}-\beta^{2})K_{|m|}(s,\beta)}.  \label{eq: Kidentity}
\end{align}
\end{rems}

\subsection{The bilinear form} 

We also prepare the bilinear form estimates associated with the operator \eqref{eq: OPL}. We show that the bilinear form $B(w,w)$ consists of the $H^{s}$-norm and lower order norms.

\begin{lem}\label{l:Bilinear}
Let $0<s<1$ and $m_0\in \mathbb{N}_{0}$. Let $\beta$ satisfy \eqref{eq: finitebm_0}. Set  

\begin{align}
B(w,\eta)=2\pi  \sum_{|m|\geq m_0}\mu_{|m|}(s,\beta)\hat{w}_m\overline{\hat{\eta}_m},\quad w,\eta\in H^{s}_{m_0}(\mathbb{T}). \label{eq:Bilinear}
\end{align}\\
Then, $B(\cdot,\cdot): H^{s}_{m_0}(\mathbb{T})\times H^{s}_{m_0}(\mathbb{T})\to \mathbb{C}$ is a bounded operator. Moreover, for the positive constant $\kappa_{|m|}(s,\beta)$ in \eqref{eq: kappa},   

\begin{equation}
\begin{aligned}
||w||_{H^{s}}^{2}&\lesssim B(w,w)-2\pi (1-s)\sum_{|m|\geq m_0}\kappa_{|m|}(s,\beta)|m| |\hat{w}_m|^{2} \\
&-2\pi (-\beta+1-s)\beta \sum_{|m|\geq m_0}\kappa_{|m|}(s,\beta)|\hat{w}_m|^{2}
\lesssim ||w||_{H^{s}}^{2},\quad w\in H^{s}_{m_0}(\mathbb{T}),  
\end{aligned}
\label{eq:bilinear1}
\end{equation}
\begin{align}
 ||w||_{H^{-\frac{1}{2}+s}}^{2}\lesssim
\sum_{|m|\geq m_0}\kappa_{|m|}(s,\beta)(1+|m|) |\hat{w}_m|^{2} \lesssim ||w||_{H^{-\frac{1}{2}+s}}^{2}, \quad w\in H^{-\frac{1}{2}+s}_{m_0}(\mathbb{T}).  \label{eq:bilinear2} 
\end{align}
\end{lem}

\vspace{5pt}

\begin{proof}
By the asymptotics of the symbol \eqref{eq: A1}, $|\mu_{|m|}(s,\beta)|\lesssim  \langle m\rangle^{2s}$ for all $|m|\geq m_0$ and 

\begin{align*}
|B(w,\eta)|\leq 2\pi  \sum_{|m|\geq m_0}\mu_{|m|}(s,\beta)\langle m\rangle^{2}|\hat{w}_m| |\hat{\eta}_m|
\lesssim \sum_{|m|\geq m_0} \langle m\rangle^{2s} |\hat{w}_m| |\hat{\eta}_m|
\lesssim  ||w||_{H^{s}}||\eta||_{H^{s}}.
\end{align*}\\
By the asymptotics \eqref{eq: A1}, 

\begin{align*}
\langle m\rangle^{2s-2} \lesssim 
\kappa_{|m|}(s,\beta)\lesssim \langle m\rangle^{2s-2},\quad |m|\geq m_0,
\end{align*}\\
and the estimates \eqref{eq:bilinear1} and \eqref{eq:bilinear2} follow.
\end{proof}

\vspace{5pt}

\section{Linear regularity estimates}\label{s:4}

From this point on, we fix parameters $m_0\in \mathbb{N}_0$, $0<s<1$, and $\beta$ satisfying \eqref{eq: finitebm_0}, and consider solutions $w\in H^{s}_{m_0}(\mathbb{T})$ to the linear $2s$-th order differential equation

\begin{align}
\mathcal{L}(s,\beta)w=g,  \label{eq:Linear}
\end{align}\\
for $g\in H^{-s}_{m_0}(\mathbb{T})$ by using the bilinear form $B(\cdot,\cdot)$ in Lemma \ref{l:Bilinear}. We establish both $L^{p}$ and H\"older regularity estimates to \eqref{eq:Linear} by using the pointwise symbol estimtes \eqref{eq: A1} and \eqref{eq: A2}. We show $L^{p}$ estimates by using a Fourier multiplier theorem and H\"older estimates by combining kernel estimates and a Fourier multiplier theorem.

\vspace{5pt}

\subsection{The Fourier multiplier theorem}

Let ${\mathcal{S}}(\mathbb{R})$ be a space of rapidly decaying functions on $\mathbb{R}$. For $f\in {\mathcal{S}}(\mathbb{R})$, we set the Fourier transform  

\begin{align*}
(\mathcal{F}f)(\xi)=\hat{f}(\xi)=\frac{1}{\sqrt{2\pi}}\int_{\mathbb{R}}f(x)e^{-ix\xi}d x,\quad \xi\in \mathbb{R},
\end{align*}\\
and the inverse Fourier transform $(\mathcal{F}^{*}f)(x)= \check{f}(x)=({\mathcal{F}}f)(-x)$. We say that $a\in L^{\infty}(\mathbb{R})$ is an $L^{p}$ multiplier if the operator $f\longmapsto (a\hat{f})\ \check{}$ is a bounded operator on $L^{p}(\mathbb{R})$ and denote the space of all $L^{p}$ multipliers on $L^{p}(\mathbb{R})$ by $\mathcal{M}_p(\mathbb{R})$. The following Lemma \ref{l:FMMT} (i) is a weaker version of the sufficient condition $|a'(\xi)|\leq C/|\xi|$ for $\xi\in \mathbb{R}\backslash \{0\}$ to guarantee that $a$ is an $L^{p}$ multiplier for all $1<p<\infty$ \cite[Theorem 6.2.7]{Grafakos}. 

Likewise, a sequence $\{a_m\}_{m\in \mathbb{Z}}\in l^{\infty}(\mathbb{Z})$ is an $L^{p}$ multiplier if the operator $f\longmapsto (a_m\hat{f}_m)^{\lor}=\sum_{m\in \mathbb{Z}}a_m \hat{f}_me^{im\theta}$ is a bounded operator on $L^{p}(\mathbb{T})$ and we denote the space of all $L^{p}$ multipliers on $L^{p}(\mathbb{T})$ by $\mathcal{M}_p(\mathbb{Z})$. The following Lemma \ref{l:FMMT} (ii) states that a sequence $\{a_m\}_{m\in \mathbb{Z}}$ belongs to $\mathcal{M}_p(\mathbb{Z})$ if it admits a continuous extension belonging to $\mathcal{M}_p(\mathbb{R})$ \cite[Theorem 4.3.7]{Grafakos}.

\vspace{5pt}

\begin{lem}\label{l:FMMT}
(i) Let $a(\xi)$ be a complex-valued bounded function in $\mathbb{R}\backslash \{0\}$. Assume that there exists $A>0$ such that 

\begin{align*}
\int_{R\leq |\xi|\leq 2R}|a'(\xi)|^{2}d \xi \leq \frac{A}{R},\quad R>0.
\end{align*} \\
Then, $a\in \mathcal{M}_p(\mathbb{R})$ for all $p\in (1,\infty)$ and $||a||_{\mathcal{M}_p(\mathbb{R})}\lesssim \max\left\{p,(p-1)^{-1}\right\}(A+||a||_{L^{\infty}(\mathbb{R}) })$.

\noindent
(ii) Assume that $a(t)\in \mathcal{M}_{p}(\mathbb{R})$ is continuous at every points $m\in \mathbb{Z}$. Then, $\{a(m)\}_{m\in \mathbb{Z}}\in \mathcal{M}_{p}(\mathbb{Z})$ and $||\{a(m)\}_{m\in \mathbb{Z}}\}||_{\mathcal{M}_{p}(\mathbb{Z})}\leq ||a||_{\mathcal{M}_{p}(\mathbb{R})}$.
\end{lem}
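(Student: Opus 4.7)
Both parts of Lemma \ref{l:FMMT} are classical Fourier multiplier statements cited from Grafakos's book, so the cleanest option is to invoke them directly; nevertheless I sketch the standard arguments in case a self-contained treatment is preferred.

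For part (i), the hypothesis is a scale-invariant H\"ormander-type condition on dyadic annuli, strictly weaker than the pointwise Mihlin bound $|a'(\xi)|\leq C/|\xi|$. My plan is to decompose $a = \sum_{j\in\mathbb{Z}} a_j$ via a smooth dyadic partition of unity $\sum_j \varphi_j \equiv 1$ with $\mathrm{supp}\,\varphi_j \subset \{2^{j-1}\leq |\xi|\leq 2^{j+1}\}$, and to show that the associated convolution kernel $K = \sum_j \check{a}_j$ satisfies H\"ormander's integral condition. Cauchy--Schwarz together with the hypothesis yields $\|a_j\|_{L^2}^2 \lesssim (A+\|a\|_{L^\infty}^2)\,2^{j}$ and $\|a_j'\|_{L^2}^2 \lesssim A\cdot 2^{-j}$; Plancherel then gives the kernel estimates $\|K_j\|_{L^2}\lesssim 2^{j/2}$ and $\|x K_j\|_{L^2}\lesssim 2^{-j/2}$, and an application of Cauchy--Schwarz on dyadic shells in $x$-space produces $\int_{|x|\geq 2|y|}|K(x-y)-K(x)|\,dx \lesssim A+\|a\|_{L^\infty}$. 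Combined with the obvious $L^2$-boundedness (from $\|a\|_{L^\infty}<\infty$), the standard Calder\'on--Zygmund theorem delivers $L^p$-boundedness for all $1<p<\infty$ with operator norm of the required form.

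For part (ii), my plan is to use a transference/sampling argument in the style of de Leeuw. Given a trigonometric polynomial $f(\theta) = \sum_{|m|\leq M}\hat{f}_m e^{im\theta}$ on $\mathbb{T}$ and a bump $\eta \in C_c^\infty(\mathbb{R})$ with $\eta\equiv 1$ on $[-1,1]$, I would form $F_R(x) = f(x)\eta(x/R)$ on $\mathbb{R}$; its Fourier transform $\hat{F}_R(\xi)=R\sum_m \hat{f}_m \hat{\eta}(R(\xi-m))$ concentrates near the integers as $R\to\infty$. Applying the multiplier $T_a$ and using continuity of $a$ at each integer together with the localization of $\hat{\eta}$, one obtains $T_a F_R(x) = \eta(x/R)\,T_{\{a(m)\}}f(x) + \text{error}$, where the error is $o(R^{1/p})$ in $L^p(\mathbb{R})$. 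Combining the inequality $\|T_a F_R\|_{L^p(\mathbb{R})}\leq \|a\|_{\mathcal{M}_p(\mathbb{R})}\|F_R\|_{L^p(\mathbb{R})}$ with the scalings $\|F_R\|_{L^p(\mathbb{R})}\sim R^{1/p}\|f\|_{L^p(\mathbb{T})}$ and $\|\eta(\cdot/R)T_{\{a(m)\}}f\|_{L^p(\mathbb{R})}\sim R^{1/p}\|T_{\{a(m)\}}f\|_{L^p(\mathbb{T})}$, I divide by $R^{1/p}$ and let $R\to\infty$; density of trigonometric polynomials in $L^p(\mathbb{T})$ then upgrades the bound to arbitrary $f\in L^p(\mathbb{T})$.

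The main obstacle in (i) is tracking the dependence of the Calder\'on--Zygmund constants on $p$ to produce the sharp factor $\max\{p,(p-1)^{-1}\}$, which requires a careful use of Marcinkiewicz interpolation; this is precisely Theorem 6.2.7 of Grafakos. The main obstacle in (ii) is justifying that the error between $a(\xi)$ and the constant value $a(m)$ in a neighborhood of $\xi=m$ produces a negligible contribution in $L^p(\mathbb{R})$; continuity of $a$ at each integer together with the rapid decay of $\hat{\eta}$ and dominated convergence handles this, and is the content of Theorem 4.3.7 of Grafakos.
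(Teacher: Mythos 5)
Your proposal is correct and matches the paper's treatment: the paper gives no proof of this lemma, simply citing \cite[Theorems 6.2.7 and 4.3.7]{Grafakos}, which are exactly the two results you identify and whose standard proofs (dyadic decomposition plus Calder\'on--Zygmund theory for (i), de Leeuw transference for (ii)) you sketch accurately.
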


\vspace{5pt}

\subsection{$L^{p}$ estimates}
We establish $L^{p}$ regularity estimates for \eqref{eq:Linear} on the Bessel potential space \cite{Triebel1}, \cite{BO13}

\begin{align*}
H^{l,p}(\mathbb{T})=\left\{f\in L^{p}(\mathbb{T})\ \middle|\ ||f||_{H^{l,p}(\mathbb{T})}=||(\langle m \rangle^{l}\hat{f}_{m})^{\lor}||_{L^{p}(\mathbb{T})}<\infty \right\},\quad 1<p<\infty,\quad l> 0.
\end{align*}

\vspace{5pt}

\begin{thm}\label{t:LP}
Let $m_0\in \mathbb{N}_0$ and $0<s<1$. Let $\beta$ satisfy \eqref{eq: finitebm_0}. Let  $1<p<\infty$. Let $w\in H^{s}_{m_0}(\mathbb{T})$ be a solution to \eqref{eq:Linear} for $g\in H^{-s}_{m_0}(\mathbb{T})$. Assume that $w\in L^{2}(\mathbb{T})$ and $g\in L^{p}(\mathbb{T})$. Then, $w\in H^{2s,p}(\mathbb{T})$ and 

\begin{align}
||w||_{H^{2s,p}(\mathbb{T})}\leq C\left(||w||_{L^{2}}+||g||_{L^{p}(\mathbb{T})} \right)  \label{eq:LP}
\end{align}\\
holds for some constant $C$, independent of $w$.
\end{thm}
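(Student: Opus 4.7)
The plan is to invert the equation mode by mode, split $w$ into a low-frequency piece and a high-frequency piece, and apply the Fourier multiplier theorem (Lemma \ref{l:FMMT}) to the latter. Testing \eqref{eq:Linear} against $e^{im\theta}$ gives $2\pi \hat{K}_m(s,\beta)(m^{2}-\beta^{2})\hat{w}_m = \hat{g}_m$ for $|m|\ge m_0$. Choose an integer $M > \max(m_0,|\beta|)$, and split $w = w^{\flat} + w^{\sharp}$, where $w^{\flat}$ collects the Fourier modes $m_0 \le |m|\le M$ and $w^{\sharp}$ the modes $|m|>M$. Since $w^{\flat}$ is a trigonometric polynomial with a bounded (independent of $w$) number of nonzero modes, Parseval yields $|\hat{w}_m|\le \|w\|_{L^2}$, whence $\|w^{\flat}\|_{H^{2s,p}(\mathbb{T})} \le C_{M,p}\|w\|_{L^2}$.

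For $w^{\sharp}$ the modal relation gives $\langle m\rangle^{2s}\hat{w}_m = a(m)\hat{g}_m$ on $|m|>M$, where
\[
a(\xi) \;=\; \frac{\langle \xi\rangle^{2s}}{2\pi\hat{K}_{\xi}(s,\beta)(\xi^{2}-\beta^{2})}.
\]
The function $\hat{K}_{\xi}(s,\beta)$ is a real-variable function for $|\xi|\ge m_0$ by \eqref{eq:Koft}, and I extend $a$ smoothly across the origin by a bump cutoff so that the extension is bounded on $\mathbb{R}$ and agrees with the formula for $|\xi|>M$. The estimate \eqref{eq:decayofK}, together with the positivity and monotonicity of $\hat{K}_{\xi}$, gives both $\hat{K}_{\xi}(s,\beta)\asymp \langle \xi\rangle^{-2(1-s)}$ and $|\partial_{\xi}\hat{K}_{\xi}(s,\beta)|\lesssim \langle \xi\rangle^{-2(1-s)-1}$ on $|\xi|\ge m_0$. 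Substituting these into the logarithmic derivative
\[
\frac{a'(\xi)}{a(\xi)} \;=\; \frac{2s\,\xi}{\langle\xi\rangle^{2}} \;-\; \frac{\partial_{\xi}\hat{K}_{\xi}(s,\beta)}{\hat{K}_{\xi}(s,\beta)} \;-\; \frac{2\xi}{\xi^{2}-\beta^{2}}
\]
yields $a(\xi)\asymp 1$ and $|a'(\xi)|\lesssim \langle \xi\rangle^{-1}$ for $|\xi|>M$, so that $\int_{R\le|\xi|\le 2R}|a'(\xi)|^{2}\,d\xi \lesssim R^{-1}$ for all $R>0$ after the cutoff. Lemma \ref{l:FMMT}(i) then gives $a\in \mathcal{M}_p(\mathbb{R})$, and Lemma \ref{l:FMMT}(ii) transfers this to $\{a(m)\}_{m\in\mathbb{Z}}\in \mathcal{M}_p(\mathbb{Z})$.

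Letting $g^{\sharp}$ denote the Fourier projection of $g$ onto $|m|>M$, which is $L^p$-bounded because it differs from the identity by a finite-rank Fourier truncation, I conclude $\|w^{\sharp}\|_{H^{2s,p}(\mathbb{T})} = \|T_a g^{\sharp}\|_{L^p(\mathbb{T})} \le C\|g\|_{L^p(\mathbb{T})}$, and \eqref{eq:LP} follows by combining with the low-frequency bound. I expect the main obstacle to be the derivative estimate $|\partial_{\xi}\hat{K}_{\xi}(s,\beta)|\lesssim \langle\xi\rangle^{-2(1-s)-1}$, which is not stated separately in \eqref{eq:decayofK}: extracting it from the joint quantity $\hat{K}_{\xi}-|\xi|\partial_{\xi}\hat{K}_{\xi}$ relies on the fact that $\hat{K}_{\xi}$ is positive and decreasing, so that both summands carry definite signs and are thus individually controlled by the upper bound.
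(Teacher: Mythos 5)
Your proposal is correct and follows essentially the same route as the paper: extract the modal relation, bound the finitely many low modes by $\|w\|_{L^{2}}$, and treat the high modes via the multiplier $a(\xi)=\langle\xi\rangle^{2s}/(2\pi\hat{K}_{\xi}(s,\beta)(\xi^{2}-\beta^{2}))$ cut off near the origin, using \eqref{eq:decayofK} and Lemma \ref{l:FMMT}. The only cosmetic difference is that the paper also records the density step (partial Fourier sums) to pass from the a priori bound to general $w\in H^{s}_{m_0}(\mathbb{T})$, and note that the two-sided bound $\hat{K}_{\xi}\asymp\langle\xi\rangle^{-2(1-s)}$ you invoke is cleanest to cite from the positivity/decay statements established in the proof of \eqref{eq:decayofK} (via Proposition \ref{p:P1}) rather than from the combined quantity in \eqref{eq:decayofK} alone.
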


\vspace{5pt}

\begin{proof}
We first show the a priori estimate \eqref{eq:LP} for a smooth solution. By \eqref{eq:Bilinear}, the Fourier coefficients of $w$ and $g$ satisfy 

\begin{align*}
\mu_{|m|}(s,\beta)\hat{w}_{m}=\hat{g}_{m},\quad |m|\geq m_0.
\end{align*}\\
{For $|m|\geq m_0+2$, $\mu_{|m|}(s,\beta)> 0$ and 
\begin{align*}
(\langle m \rangle^{2s}\hat{w}_{m})^{\lor}
=\sum_{|m|\geq m_0}\langle m \rangle^{2s}\hat{w}_{m}e^{im\theta} 
&=\sum_{m_0+1\geq |m|\geq m_0}\langle m \rangle^{2s}\hat{w}_{m}e^{im\theta}
+\sum_{|m|\geq m_0+2}\langle m \rangle^{2s}\hat{w}_{m}e^{im\theta} \\
&=\sum_{m_0+1\geq |m|\geq m_0}\langle m \rangle^{2s}\hat{w}_{m}e^{im\theta}
+\sum_{|m|\geq m_0+2}\langle m \rangle^{2s}\mu_{|m|}(s,\beta)^{-1} \hat{g}_{m}e^{im\theta}.
\end{align*}\\
We estimate $L^{p}$-norms of the first two terms by the $L^{2}$-norm of $w$. We take an even $C^{1}$-function $a(t)$ on $\mathbb{R}$ such that 
\begin{align*}
a(t)=
\begin{cases}
\ \displaystyle \langle t \rangle^{2s} \mu_{t}(s,\beta)^{-1},\quad & t\geq m_0+2, \\
\ 0,\quad & 0 \le t \le m_0+1. 
\end{cases}
\end{align*}\\}
By the asymptotics \eqref{eq: A1} and \eqref{eq: A2}, $\mu_{t}(s,\beta)=O(|t|^{2s})$ and $\partial_t \mu_{t}(s,\beta)=O(|t|^{2s+1})$ as $|t|\to\infty$. Thus, $a(t) \in \mathcal{M}_{p}(\mathbb{R})$. By Lemma \ref{l:FMMT}, $\{a(m)\}_{m\in \mathbb{Z}}\in \mathcal{M}_{p}(\mathbb{Z})$ and \eqref{eq:LP} holds. 

For a solution $w\in H^{s}_{m_0}(\mathbb{T})$ with $g\in H^{-s}_{m_0}(\mathbb{T})$, the partial sums of Fourier series $w^{(N)}=\sum_{N\geq |m|\geq m_0}\hat{w}_{m}e^{im\theta}$ and $g^{(N)}=\sum_{N\geq |m|\geq m_0}\hat{g}_{m}e^{im\theta}$ is a smooth solution to $\mathcal{L}(s,\beta)w^{(N)}=g^{(N)}$. Since $w^{(N)}\to w$ in $L^{2}(\mathbb{T})$ and $g^{(N)}\to g$ in $L^{p}(\mathbb{T})$, e.g., \cite[Theorem 4.3.14]{Grafakos}, applying \eqref{eq:LP} yields $w^{(N)}\to w$ in $H^{2s,p}(\mathbb{T})$ and \eqref{eq:LP} holds for $w$ and $g$.
\end{proof}

\subsection{H\"older estimates}

We define the spaces of H\"older continuous functions

\begin{align*}
C^{\nu}(\mathbb{T})&=\{f\in L^{\infty}(\mathbb{T})\ |\ ||f||_{C^{\nu}(\mathbb{T})}<\infty\  \},\quad 0<\nu<1, \\
||f||_{C^{\nu}(\mathbb{T})}&=||f||_{L^{\infty}(\mathbb{T})}+[f]^{(\nu)}_{\mathbb{T}},\quad 
[f]^{(\nu)}_{\mathbb{T}}=\sup_{t>0}\frac{||f(\cdot +t)-f(\cdot)||_{L^{\infty}(\mathbb{T})} }{t^{\nu}},\\
C^{k+\nu}(\mathbb{T})&=\{f\in L^{\infty}(\mathbb{T})\ |\ ||f||_{C^{k+\nu}(\mathbb{T})}<\infty \},\quad k\in \mathbb{N}, \\
||f||_{C^{k+\nu}(\mathbb{T})}&=||f||_{C^{k}(\mathbb{T})}+\sum_{|\alpha|=k}[\partial^{\alpha}f ]^{(\nu)}_{\mathbb{T}}.
\end{align*}\\
The inverse operator $\mathcal{L}(s,\bar{\beta})^{-1}$ agrees with the convolution operator $\mathcal{K}(1-s,\bar{\beta}+2s-2)$ for $m_0\geq 0$ and $0<\bar{\beta}<2-2s$ in Remarks \ref{r: SP} (iv). The following estimates show that the kernel $K(\theta; 1-s,\bar{\beta}+2s-2)$ of $\mathcal{K}(1-s,\bar{\beta}+2s-2)$ for $0<s<1/2$ has the same singularity as that of the one-dimensional Riesz potential ($n=1$):
\begin{align*}
\psi=(-\Delta)^{-s}\omega=C(n,s)\frac{1}{|x|^{n-2s}}*\omega,\quad C(n,s)=\frac{\displaystyle\Gamma\left(\frac{n}{2}-s\right)}{4^{s}\pi^{\frac{n}{2}} \Gamma(s)}.
\end{align*}\\
For $s=1/2$, the kernel $K(\theta; 1-s,\bar{\beta}+2s-2)$ has a logarithmic singularity, and this was already obtained in \cite[Lemmas 4.11 and 4.12]{CCG20}. For $1/2<s<1$, the kernel $K(\theta; 1-s,\bar{\beta}+2s-2)$ is H\"older continuous of the exponent $2s-1$.

\vspace{5pt}

\begin{prop}
Let $0<s<1$ and $0<\bar{\beta}<2-2s$. The following holds for $|\theta|\leq \pi/2$: 

\begin{align}
K(\theta; 1-s,\bar{\beta}+2s-2)&\lesssim 
\begin{cases}
\ \displaystyle\frac{1}{|\theta|^{1-2s}},\quad &0<s<\displaystyle\frac{1}{2},\\
\ -\log{|\theta|}+C,\quad &s=\displaystyle\frac{1}{2},\quad 
\end{cases}  \label{eq:PK1}\\
|K^{(m)}(\theta; 1-s,\bar{\beta}+2s-2)|&\lesssim \frac{1}{|\theta|^{1-2s+m}},\quad 0<s<1,\quad  m\in \mathbb{N}. \label{eq:PK2}
\end{align}
\end{prop}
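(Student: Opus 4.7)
The strategy is to exploit the factorization
\begin{align*}
\rho^{2} + 1 - 2\rho\cos\theta = (\rho - \cos\theta)^{2} + \sin^{2}\theta,
\end{align*}
which reveals that the integrand in
\begin{align*}
K(\theta; 1-s, \bar\beta + 2s - 2) = C(s) \int_{0}^{\infty} \frac{d\rho}{(\rho^{2} + 1 - 2\rho\cos\theta)^{1-s}\,\rho^{\bar\beta + 2s - 1}}
\end{align*}
is singular only near $\rho = \cos\theta$, with the minimum value $\sin^{2}\theta$ of the denominator controlling the strength of the singularity.

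First I would split the integration domain as $(0, \infty) = (0, 1/2) \cup [1/2, 2] \cup (2, \infty)$. On the outer pieces, for $|\theta| \leq \pi/2$ we have $\cos\theta \in [0,1]$, so an elementary minimization gives $(\rho - \cos\theta)^{2} + \sin^{2}\theta \geq 1 - \rho^{2} \geq 3/4$ on $(0,1/2)$ and $\geq (\rho - 1)^{2} \geq 1$ on $(2,\infty)$. Hence on these pieces the integrand is bounded by a constant times $\rho^{-(\bar\beta + 2s - 1)}$, which is uniformly integrable thanks to $0 < \bar\beta < 2-2s$, giving an $O(1)$ contribution.

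Next, for the middle piece $[1/2, 2]$, the factor $\rho^{-(\bar\beta+2s-1)}$ is comparable to a constant, so the substitution $\rho - \cos\theta = |\sin\theta|\,t$ yields
\begin{align*}
|\sin\theta|^{1 - 2(1-s)} \int_{a(\theta)}^{b(\theta)} \frac{dt}{(t^{2}+1)^{1-s}},
\end{align*}
with $|a(\theta)|, |b(\theta)| \lesssim 1/|\sin\theta|$. For $0 < s < 1/2$ the exponent $2(1-s) > 1$ makes $(t^{2}+1)^{-(1-s)}$ integrable on $\mathbb{R}$, so extending the bounds to all of $\mathbb{R}$ gives the bound $|\sin\theta|^{2s-1} \sim |\theta|^{-(1-2s)}$, proving the first case of \eqref{eq:PK1}. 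For $s = 1/2$ the antiderivative $\mathrm{arcsinh}(t) \sim \log|t|$, evaluated at the truncated endpoints $|t| \sim 1/|\sin\theta|$, gives the logarithmic bound $-\log|\theta| + O(1)$.

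Finally, for the derivative estimates \eqref{eq:PK2} I would differentiate under the integral. By induction on $m$, $\partial_{\theta}^{m}(\rho^{2}+1-2\rho\cos\theta)^{-(1-s)}$ is a sum of terms of the form
\begin{align*}
P_{j}(\sin\theta, \cos\theta, \rho)\,(\rho^{2}+1-2\rho\cos\theta)^{-(1-s)-j},
\end{align*}
where each $P_{j}$ is a polynomial. The key bookkeeping is that each additional $\partial_{\theta}$ either pulls out a factor $2\rho\sin\theta$ (raising the denominator exponent by one) or differentiates a previously accumulated $\sin\theta/\cos\theta$ prefactor (leaving the exponent unchanged). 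Consequently, in every term the total $\sin\theta$-degree plus twice the exponent increment equals $m$, so after the substitution $\rho-\cos\theta = |\sin\theta|t$ every term scales like $|\sin\theta|^{2s-1-m}$. Since $(1-s)+j > 1/2$ for $j \geq 1$, each $t$-integral converges on $\mathbb{R}$, so no logarithmic obstruction appears. The outer tails are again $O(1)$ by Step~1, yielding $|K^{(m)}(\theta)| \lesssim |\theta|^{-(1-2s+m)}$.

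The main obstacle is precisely this algebraic bookkeeping of prefactors vs.~exponent increments in the higher-derivative case; once that balance is verified, the change of variables $\rho - \cos\theta = |\sin\theta|t$ does the rest mechanically, and the case-distinction $s \lessgtr 1/2$ collapses for $m \geq 1$ since every relevant $t$-integral converges.
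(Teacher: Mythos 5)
Your proposal follows essentially the same strategy as the paper's proof: isolate the singularity of the $\rho$-integral near $\rho\approx 1$ (equivalently $\rho\approx\cos\theta$), bound the two tails using $0<\bar{\beta}<2-2s$, and rescale the middle piece. The paper implements this by setting $t=1-\cos\theta$, writing the denominator as $(\rho-1)^{2}+2t\rho$, substituting $\eta=t^{-1/2}(\rho-1)$, and differentiating in $t$; you complete the square as $(\rho-\cos\theta)^{2}+\sin^{2}\theta$, substitute $\rho-\cos\theta=|\sin\theta|\,t$, and differentiate directly in $\theta$. These are equivalent up to bounded factors, and your treatment of \eqref{eq:PK1}, including the $s=1/2$ logarithm, is correct. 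One cosmetic slip: the minorant $1-\rho^{2}$ on $(0,1/2)$ fails when $\rho<\cos\theta$, but $(\rho-\cos\theta)^{2}+\sin^{2}\theta=(1-\rho)^{2}+2\rho(1-\cos\theta)\geq(1-\rho)^{2}\geq 1/4$ there, so the tail bound stands.

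The one genuine problem is the combinatorial invariant you state for \eqref{eq:PK2}. The claim that ``the total $\sin\theta$-degree plus twice the exponent increment equals $m$'' is false already for $m=1$: the single term $-2(1-s)\rho\sin\theta\,(\rho^{2}+1-2\rho\cos\theta)^{-(2-s)}$ has degree $a=1$ and increment $j=1$, so $a+2j=3\neq 1$; moreover, substituting $a=m-2j$ into the scaling $|\sin\theta|^{a-1+2s-2j}$ of a term after your change of variables does not produce $|\sin\theta|^{2s-1-m}$. The correct invariant is that each application of $\partial_{\theta}$ increases the quantity $2j-a$ by at most $1$ (by $+1$ when it hits the power or a $\sin\theta$-prefactor, by $-1$ when it hits a $\cos\theta$-prefactor), so every term satisfies $2j-a\leq m$ and hence scales like $|\sin\theta|^{a-1+2s-2j}\leq|\sin\theta|^{2s-1-m}$ for $|\sin\theta|\leq 1$, with the worst terms attaining equality. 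With that correction, the rest of your argument --- convergence of each $t$-integral since $j\geq 1$ forces $2\bigl((1-s)+j\bigr)>1$, and the $O(1)$ tails --- goes through and recovers the stated bound, in agreement with the paper.
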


\vspace{5pt}

\begin{proof}
We set $t=1-\cos\theta$ and 

\begin{align*}
K(\theta; 1-s,\bar{\beta}+2s-2)=\int_{0}^{\infty}\frac{d \rho}{( (\rho-1)^{2}+2t\rho )^{1-s}\rho^{\bar{\beta}+2s-1} }
=:f(t). 
\end{align*}\\
Since $\lim_{\theta\to 0}t/\theta^{2}=1/2$, it suffices to show the following estimates for $0<t\leq 1$:

\begin{align}
f(t)&\lesssim 
\begin{cases}
\ \displaystyle\frac{1}{t^{\frac{1}{2} -s} }\quad 0<s<\frac{1}{2},\\
\ -\log{t}+C\quad s=\displaystyle\frac{1}{2},
\end{cases}\label{eq:PF1}\\
|f^{(m)}(t)|&\lesssim \frac{1}{t^{\frac{3}{2}-s}},\quad 0<s<1,\quad m\in \mathbb{N}. \label{eq:PF2}
\end{align}\\
We set 

\begin{align*}
f(t)=\int_{0}^{\infty}\frac{d \rho}{( (\rho-1)^{2}+2t\rho )^{1-s}\rho^{\bar{\beta}+2s-1} }=\int_{0}^{1/2}+\int_{1/2}^{3/2}+\int_{3/2}^{\infty}
=:f_1(t)+f_2(t)+f_3(t).
\end{align*}\\
The functions $f_1$ and $f_3$ are bounded for $0\leq t\leq 1$. By changing the variable by $\eta=t^{-1/2}(\rho-1)$,

\begin{align*}
f_2(t)=\int_{1/2}^{3/2}\frac{d \rho}{( (\rho-1)^{2}+2t\rho )^{1-s}\rho^{\bar{\beta}+2s-1} }
\lesssim \int_{1/2}^{3/2}\frac{d \rho}{( (\rho-1)^{2}+t)^{1-s}} 
\lesssim \frac{1}{t^{1/2-s}}\int_{0}^{1/(2\sqrt{t})}\frac{d \rho}{( \eta^{2}+1)^{1-s}}. 
\end{align*}\\
The function $(\eta^{2}+1)^{-1+s}$ for $0<s<1/2$ is integrable in $(0,\infty)$. For $s=1/2$, 

\begin{align*}
\int_{0}^{1/(2\sqrt{t})}\frac{d \rho}{\sqrt{ \eta^{2}+1}}
=\left[\log(\eta+\sqrt{\eta^{2}+1}) \right]_{0}^{1/(2\sqrt{t})}
=\log\left(\frac{1}{2}+\sqrt{\frac{1}{4}+t } \right)-\frac{1}{2}\log{t}.
\end{align*}\\
We demonstrated \eqref{eq:PF1}. The functions $f_1'$ and $f_3'$ are bounded for $0\leq t\leq 1$ and 

\begin{align*}
f_2'(t)=2(s-1) \int_{1/2}^{3/2}\frac{d \rho}{( (\rho-1)^{2}+2t\rho )^{2-s}\rho^{\bar{\beta}+2s-2} }.
\end{align*}\\
By changing the variable by $\eta=t^{-1/2}(\rho-1)$,

\begin{align*}
|f_2'(t)|\lesssim  \int_{1/2}^{3/2}\frac{d \rho}{( (\rho-1)^{2}+t )^{2-s}}
\lesssim  \frac{1}{t^{3/2-s}}\int_{0}^{1/(2\sqrt{t})}\frac{d \eta}{( \eta^{2}+1 )^{2-s}}
\leq  \frac{1}{t^{3/2-s}}\int_{0}^{\infty}\frac{d \eta}{( \eta^{2}+1 )^{2-s}}. 
\end{align*}\\
We obtained \eqref{eq:PF2} for $m=1$. The case $m\geq 2$ is similar.
\end{proof}

\vspace{5pt}

We show the following H\"older regularity estimates for the non-local $2s$-th order differential operator $\mathcal{L}(s,\beta)$ involving the parameter $\beta$, cf. \cite[Proposition 2.8]{Sil}, \cite[Lemma 1.8.3]{Ros24} for the case of the fractional Laplace operator.

\vspace{5pt}

\begin{thm}\label{t:Holder}
Let $m_0\in \mathbb{N}_0$ and $0<s<1$. Let $\beta$ satisfy \eqref{eq: finitebm_0}. Let $0<\nu<1$ satisfy $2s+\nu\notin \mathbb{N}$. Let $w\in H^{s}_{m_0}(\mathbb{T})$ be a solution to \eqref{eq:Linear} for $g\in H^{-s}_{m_0}(\mathbb{T})$. Assume that $w\in L^{\infty}(\mathbb{T})$ and $g\in C^{\nu}(\mathbb{T})$. Then, $w\in C^{2s+\nu}(\mathbb{T})$ and 

\begin{align}
||w||_{C^{2s+\nu}(\mathbb{T})}\leq C\left(||w||_{L^{\infty}}+||g||_{C^{\nu}(\mathbb{T})} \right)   \label{eq:Holder}
\end{align}\\
holds for some constant $C$, independent of $w$.
\end{thm}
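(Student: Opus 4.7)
The plan is to combine the inverse operator identity $(-\mathcal{L}(s,\beta))^{-1}=\mathcal{K}(1-s,\beta+2s-2)$ from Section~\ref{s:2} with a Schauder-type convolution estimate in the spirit of \cite[Proposition 2.8]{Sil}, \cite[Lemma 1.8.3]{Ros24}, using the pointwise kernel bounds \eqref{eq:PK1}--\eqref{eq:PK2}.

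First, I would split $w=w^{L}+w^{H}$ into a low-mode part $w^{L}=\sum_{m_{0}\le|m|\le M}\hat{w}_{m}e^{im\theta}$ and a high-mode part $w^{H}$, choosing $M$ large enough that both $m^{2}-\beta^{2}>0$ and the symbol $2\pi\hat{K}_{m}(s,\beta)(m^{2}-\beta^{2})$ is comparable to $\langle m\rangle^{2s}$ for $|m|>M$. Since $w^{L}$ lives in a fixed finite-dimensional trigonometric polynomial space, Parseval and equivalence of norms give $||w^{L}||_{C^{2s+\nu}(\mathbb{T})}\lesssim||w||_{L^{2}(\mathbb{T})}\lesssim||w||_{L^{\infty}(\mathbb{T})}$, which already accounts for the $||w||_{L^\infty}$ term in \eqref{eq:Holder}.

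Second, on the high-mode part the inverse identity yields $w^{H}=\mathcal{K}(1-s,\beta+2s-2)g^{H}$. To place this into the kernel framework of \eqref{eq:PK1}--\eqref{eq:PK2}, which was only established for the parameter window $0<\bar\beta<2-2s$, I would fix a reference $\bar\beta\in(0,2-2s)$ and decompose
\begin{align*}
\mathcal{K}(1-s,\beta+2s-2)=\mathcal{K}(1-s,\bar\beta+2s-2)+\mathcal{R},
\end{align*}
where $\mathcal{R}$ is the Fourier multiplier with symbol $2\pi[\hat{K}_{m}(1-s,\beta+2s-2)-\hat{K}_{m}(1-s,\bar\beta+2s-2)]$. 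By \eqref{eq:dbetaK}, this difference decays like $\langle m\rangle^{-(3-2s)}$, so $\mathcal{R}$ is a smoothing perturbation; iterating the Taylor expansion in $\beta$ about $\bar\beta$ as many times as needed, the remainder becomes arbitrarily smoothing and is absorbed into the $||g||_{L^\infty}\leq||g||_{C^\nu}$ bound. It thus suffices to prove the Schauder estimate for the explicit kernel $K(\cdot;1-s,\bar\beta+2s-2)$.

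Third, carry out the Schauder-type convolution argument on the main kernel. For $2s+\nu<1$, split $[K\ast g^{H}](\theta_{1})-[K\ast g^{H}](\theta_{2})$ at scale $r=|\theta_{1}-\theta_{2}|$: on $\{|\eta-\theta_{1}|<2r\}$ bound the kernel by \eqref{eq:PK1} and use the $C^{\nu}$-oscillation of $g^{H}$; on the far region apply the mean value theorem with \eqref{eq:PK2}. For $2s+\nu\in(1,2)$, work with $\partial_{\theta}w^{H}(\theta)=\int_{\mathbb{T}}\partial_{\theta}K(\theta-\eta)\,[g^{H}(\eta)-g^{H}(\theta)]\,d\eta$ (the subtraction being legal because $\int_{\mathbb{T}}\partial_{\theta}K\,d\eta=0$ by periodicity), and iterate the same near/far split against the singularity $|\partial_\theta K|\lesssim|\theta|^{2s-2}$ from \eqref{eq:PK2}. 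The assumption $2s+\nu\notin\mathbb{N}$ rules out the logarithmic borderline cases.

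The main obstacle I anticipate is the regime $2s+\nu\in(2,3)$, which occurs when $2s>1$. Here the second derivative of the kernel is of size $|\theta|^{2s-3}$, which is not locally integrable, and one cannot subtract a linear Taylor piece of $g^{H}$ at the base point since $g$ is only $C^{\nu}$ with $\nu<1$. The resolution is a bootstrap: first run the case $2s+\nu-1\in(1,2)$ on $w^{H}$ to upgrade $\partial_{\theta}w^{H}$ to $C^{2s+\nu-1}$, then re-apply the first-order Schauder argument to a representation of $\partial_{\theta}^{2}w^{H}$ in which the derivative has been distributed to the kernel via periodicity, not to $g$. Keeping this bootstrap finite, and controlling constants uniformly in $\beta$ up to the endpoints of the range $(-m_0-2s,m_0)$, constitutes the technical heart of the argument.
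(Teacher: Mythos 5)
Your proposal is correct and follows essentially the same route as the paper: a low/high Fourier-mode split, the inverse-operator identity $(-\mathcal{L}(s,\beta))^{-1}=\mathcal{K}(1-s,\beta+2s-2)$ on the high modes, a reduction to a reference parameter $\bar\beta\in(0,2-2s)$ with the difference treated as a smoothing correction via \eqref{eq:dbetaK}, and a Riesz-potential-type Schauder estimate on the explicit kernel (which the paper delegates to the cited argument in \cite[Lemma 1.8.3]{Ros24} rather than writing out the near/far splitting). The only minor deviation is that the paper needs just the single first-order $\beta$-difference — whose symbol gains one full power of $\langle m\rangle$, so that $\mathcal{R}g\in H^{2s+1,p}(\mathbb{T})\subset C^{2s+\nu}(\mathbb{T})$ for large $p$ — rather than your iterated Taylor expansion, whose higher $\beta$-derivatives are in any case not estimated in the paper.
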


\vspace{5pt}

\begin{prop}  \label{p:KernelHolder}
Let $0<s<1$, $0<{\beta}_0<2-2s$, and $0<\nu<1$ such that $2s+\nu\notin \mathbb{N}$. Then, the estimate \eqref{eq:Holder} holds for $w=\mathcal{L}(s,{\beta}_0)^{-1}g$ and $g\in C^{\nu}(\mathbb{T})$. 
\end{prop}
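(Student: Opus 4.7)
The plan is to prove the Hölder estimate \eqref{eq:Holder} directly from the convolution formula $w(\theta)=\int_{\mathbb{T}}K(\theta-\theta';1-s,\beta_0+2s-2)g(\theta')d\theta'$, combined with the pointwise kernel bounds \eqref{eq:PK1}--\eqref{eq:PK2} from the preceding proposition. Writing $\sigma:=2s+\nu$, the assumption $\sigma\notin\mathbb{N}$ partitions the argument into three cases according to whether $\sigma\in(0,1)$, $(1,2)$, or $(2,3)$. In each case, the trivial bound $\|w\|_{L^\infty}\le\|w\|_{L^\infty}$ handles the non-seminorm part of the norm, so I only need to control the appropriate Hölder seminorm.

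For the base case $\sigma\in(0,1)$ (forcing $s<1/2$), I would write, for small $|h|$,
\begin{align*}
w(\theta+h)-w(\theta)=\int_{\mathbb{T}}\bigl[K(\theta+h-\theta')-K(\theta-\theta')\bigr]\bigl[g(\theta')-g(\theta)\bigr]\,d\theta',
\end{align*}
using that subtracting $g(\theta)$ contributes nothing by periodicity. Splitting the integral into the near region $\{|\theta-\theta'|\le 2|h|\}$ and its complement, the near piece is estimated by $|g(\theta')-g(\theta)|\lesssim\|g\|_{C^\nu}|\theta-\theta'|^\nu$ together with $\int_{|\eta|\le 2|h|}|\eta|^{2s-1}d\eta\lesssim|h|^{2s}$ from \eqref{eq:PK1}; the far piece is estimated by the mean value theorem and $|K'(\eta)|\lesssim|\eta|^{2s-2}$ from \eqref{eq:PK2}, giving $|h|\cdot\int_{2|h|}^{\pi}r^{2s+\nu-2}dr\lesssim|h|^\sigma$ (since $\sigma<1$ makes the exponent strictly less than $-1$). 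Together, $[w]_{C^\sigma}\lesssim\|g\|_{C^\nu}$.

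For $\sigma\in(1,2)$ I would first establish that $w\in C^1$ via the formula $\partial_\theta w(\theta)=\int_{\mathbb{T}}K'(\theta-\theta')[g(\theta')-g(\theta)]d\theta'$, whose convergence follows from $|K'(\eta)|\lesssim|\eta|^{-(2-2s)}$ and $\sigma>1$. Then I repeat the above splitting argument verbatim, with the roles of $K$ and $K'$ played by $K'$ and $K''$ respectively (now using $|K''(\eta)|\lesssim|\eta|^{-(3-2s)}$). The integrals reduce to $\int_{|\eta|\le 2|h|}|\eta|^{\nu+2s-2}d\eta\sim|h|^{\sigma-1}$ and $|h|\int_{2|h|}^{\pi}r^{\nu+2s-3}dr\sim|h|^{\sigma-1}$, yielding $[\partial_\theta w]_{C^{\sigma-1}}\lesssim\|g\|_{C^\nu}$. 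The case $\sigma\in(2,3)$ (requiring $s>1/2$) is treated analogously by taking one more derivative to obtain $\partial_\theta^2 w=\int K''(\theta-\theta')[g(\theta')-g(\theta)]d\theta'$, with convergence guaranteed by $\sigma>2$, and applying the same near/far split to $K''$ and $K'''$.

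The main technical care is threefold. First, the non-integer condition $\sigma\notin\mathbb{N}$ is used precisely to avoid logarithmic divergences in the far-region integral at the transition exponents. Second, the logarithmic factor in \eqref{eq:PK1} for $s=1/2$ only appears in the sub-case $\sigma\in(1,2)$, and there it enters the kernel $K'$ rather than $K$ in Case 2, where one checks directly that $|K'(\eta)|\lesssim|\eta|^{-1}$ (without a log factor) keeps the near-region integral of order $|h|^\nu=|h|^{\sigma-1}$. Third, the translation-invariance trick that allows one to replace $g(\theta')$ by $g(\theta')-g(\theta)$ must be applied carefully after each differentiation, which is legitimate since $K^{(k)}$ remains $2\pi$-periodic so $\int K^{(k)}(\theta+h-\theta')d\theta'=\int K^{(k)}(\theta-\theta')d\theta'$. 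The bound $[w]_{C^{k+\nu'}}\lesssim\|g\|_{C^\nu}$ obtained in each case, combined with $\|w\|_{L^\infty}$, produces \eqref{eq:Holder}.
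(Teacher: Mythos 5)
Your proposal is correct and is essentially the paper's proof written out in full: the paper disposes of this proposition by combining the kernel bounds \eqref{eq:PK1}--\eqref{eq:PK2} with the standard Schauder-type potential estimate for the Riesz potential cited from \cite[Lemma 1.8.3]{Ros24}, which is precisely the near/far splitting with the $g(\theta')-g(\theta)$ cancellation that you carry out case by case in $\sigma=2s+\nu$. The only step deserving a word more of care is the second-derivative formula in the case $\sigma\in(2,3)$, where the boundary terms produced by differentiating the truncated integral cancel against $-2K'(\varepsilon)g(\theta)$ precisely because $K'$ is odd and $K'(\varepsilon)\varepsilon^{\nu}\to 0$; this is standard and consistent with the cited reference.
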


\vspace{5pt}

\begin{proof}
The estimate \eqref{eq:Holder} follows from the kernel estimates \eqref{eq:PK1} and \eqref{eq:PK2} and the same potential estimate argument for the case of the Riesz potential \cite[Lemma 1.8.3]{Ros24}. 
\end{proof}

\vspace{5pt}

\begin{prop}\label{p:SymbolHolder}
Let $m_0\in \mathbb{N}_0$, $0<s<1$, and $-m_0<{\beta}_1< m_0+2-2s$. Let $0<\nu<1$ satisfy $2s+\nu\notin \mathbb{N}$. Then, the estimate \eqref{eq:Holder} holds for $w=\mathcal{L}(s,{\beta}_1)^{-1}g$ and $g\in H^{-s}_{m_0}\cap C^{\nu}(\mathbb{T})$. 
\end{prop}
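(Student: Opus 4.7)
The plan is to reduce to Proposition \ref{p:KernelHolder} by an operator splitting in the parameter $\beta$. First I would fix any $\beta_0 \in (0, 2-2s)$, for instance $\beta_0 = 1-s$, and decompose on $H^{-s}_{m_0}(\mathbb{T})$
\begin{align*}
\mathcal{K}(1-s, \beta_1+2s-2) = \mathcal{K}(1-s, \beta_0 + 2s-2) + M,
\end{align*}
where $M$ acts as the Fourier multiplier
\begin{align*}
\hat{M}_m = 2\pi\bigl[\hat{K}_m(1-s, \beta_1+2s-2) - \hat{K}_m(1-s, \beta_0+2s-2)\bigr], \qquad |m| \geq m_0.
\end{align*}
Writing $w = w_0 + w_1$ with $w_0 = \mathcal{K}(1-s, \beta_0 + 2s - 2)g$ and $w_1 = Mg$, the kernel-type term $w_0$ satisfies the target estimate \eqref{eq:Holder} directly by Proposition \ref{p:KernelHolder}, since $\beta_0$ lies in its allowed range.

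The main step, which I expect to be the hardest part, is to show that $M$ is smoothing of order $1+2s$. By the fundamental theorem of calculus applied to $\beta \mapsto \hat{K}_m(1-s,\beta+2s-2)$ on $[\beta_0,\beta_1]$, together with the $\beta$-derivative bounds \eqref{eq:dbetaK} used with $s$ replaced by $1-s$, I would obtain
\begin{align*}
|\hat{M}_m| \lesssim |\beta_1-\beta_0|\langle m\rangle^{-1-2s}, \qquad |\partial_t \hat{M}_t| \lesssim |\beta_1-\beta_0|\langle t\rangle^{-2-2s},
\end{align*}
so that the rescaled multiplier $\langle t\rangle^{1+2s}\hat{M}_t$ is bounded with derivative of size $O(\langle t\rangle^{-1})$. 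After a smooth extension by cutoff to $|t|<m_0$, this satisfies the hypothesis of Lemma \ref{l:FMMT}, and hence $M : L^p(\mathbb{T}) \to H^{1+2s,p}(\mathbb{T})$ is bounded for every $1<p<\infty$, exactly in parallel with the proof of Theorem \ref{t:LP}.

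To pass from the $L^p$-multiplier bound to the desired Hölder estimate, I would choose $p > 1/(1-\nu)$ and invoke the Sobolev embedding $H^{1+2s,p}(\mathbb{T}) \hookrightarrow C^{2s+\nu}(\mathbb{T})$, which holds because $1+2s-1/p > 2s+\nu$ and $2s+\nu \notin \mathbb{N}$ by hypothesis. This yields
\begin{align*}
\|w_1\|_{C^{2s+\nu}(\mathbb{T})} \lesssim \|w_1\|_{H^{1+2s,p}(\mathbb{T})} \lesssim \|g\|_{L^p(\mathbb{T})} \lesssim \|g\|_{C^\nu(\mathbb{T})}.
\end{align*}
Combined with the trivial bound $\|w_0\|_{L^\infty} \leq \|w\|_{L^\infty} + \|w_1\|_{L^\infty}$ and an $L^\infty$ estimate $\|w_1\|_{L^\infty} \lesssim \|g\|_{L^\infty}$ (from Parseval and the summability of $|\hat{M}_m|^2$), this delivers
\begin{align*}
\|w\|_{C^{2s+\nu}(\mathbb{T})} \leq \|w_0\|_{C^{2s+\nu}(\mathbb{T})} + \|w_1\|_{C^{2s+\nu}(\mathbb{T})} \leq C\bigl(\|w\|_{L^\infty(\mathbb{T})} + \|g\|_{C^\nu(\mathbb{T})}\bigr),
\end{align*}
which is \eqref{eq:Holder}. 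The principal technical difficulty lies in the careful extension of the integer-indexed multiplier $\hat{M}_m$ to a smooth function on $\mathbb{R}$ satisfying the Mikhlin-type hypothesis of Lemma \ref{l:FMMT} uniformly in $\beta_1$, and then verifying the Sobolev embedding statement on $\mathbb{T}$ with the desired exponents.
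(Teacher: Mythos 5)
Your proposal is correct and follows essentially the same route as the paper: the same splitting $\mathcal{K}(1-s,\beta_1+2s-2)=\mathcal{K}(1-s,\beta_0+2s-2)+M$ with $\beta_0\in(0,2-2s)$, Proposition \ref{p:KernelHolder} for the kernel part, the fundamental theorem of calculus in $\beta$ together with \eqref{eq:dbetaK} to show the difference multiplier gains $1+2s$ derivatives in $L^p$ via Lemma \ref{l:FMMT}, and the Sobolev embedding \eqref{eq:S3} with $p$ large to land in $C^{2s+\nu}(\mathbb{T})$. Your extra care in extending the multiplier to $\mathbb{R}$ and in bounding $\|w_0\|_{L^\infty}$ by $\|w\|_{L^\infty}+\|g\|_{L^\infty}$ only makes explicit what the paper leaves implicit.
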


\vspace{5pt}

\begin{proof}
We take $0<{\beta}_0<2-2s$ and set

\begin{align*}
w&=\mathcal{L}(s,{\beta}_0)^{-1}g+\left(\mathcal{L}(s,{\beta}_1)^{-1}-\mathcal{L}(s,{\beta}_0)^{-1}\right)g\\
&=\mathcal{L}(s,{\beta}_0)^{-1}g+ \sum_{|m|\geq m_0}\left( \frac{1}{\mu_{|m|}(s,\beta_1)}-\frac{1}{\mu_{|m|}(s,\beta_0)}\right)\hat{g}_me^{im\theta}\\
&=\mathcal{L}(s,{\beta}_0)^{-1}g+ \sum_{|m|\geq m_0}\left( \frac{1}{\mu_{|m|}(s,\beta_1)\mu_{|m|}(s,\beta_0)}\int_{\beta_1}^{\beta_0}\frac{d}{d\beta}\mu_{|m|}(s,\beta)d\beta \right)\hat{g}_me^{im\theta}
=:w_1+w_2.
\end{align*}\\
The function $w_1$ satisfies the desired estimate by Proposition \ref{p:KernelHolder}. By the asymptotics \eqref{eq: A1}, \eqref{eq: A3}, and \eqref{eq: A4}, the sequence 

\begin{align*}
\left\{\langle m \rangle^{2s+1}\left( \frac{1}{\mu_{|m|}(s,\beta_1)\mu_{|m|}(s,\beta_0)}\int_{\beta_1}^{\beta_0}\frac{d}{d\beta}\mu_{|m|}(s,\beta)d\beta \right)\right\}
\end{align*}\\
is an $L^{p}$ multiplier on $L^{p}(\mathbb{Z})$. Thus 

\begin{align*}
||w_2||_{H^{2s+1,p}(\mathbb{T})}=\left\| \left(\langle m \rangle^{2s+1}\left( \frac{1}{\mu_{|m|}(s,\beta_1)\mu_{|m|}(s,\beta_0)}\int_{\beta_1}^{\beta_0}\frac{d}{d\beta}\mu_{|m|}(s,\beta)d\beta \right)  \hat{g}_{m} \right)^{\lor}  \right\|_{L^{p}(\mathbb{T})}\lesssim ||g||_{L^{p}(\mathbb{T})}\lesssim ||g||_{L^{\infty}(\mathbb{T})}.
\end{align*}\\
We take large $1<p<\infty$ so that $2s+1-1/p>2s+\nu$ and $H^{2s+1,p}(\mathbb{T})\subset C^{2s+1-\frac{1}{p}}(\mathbb{T})\subset C^{2s+\nu }(\mathbb{T})$ by the Sobolev embedding \eqref{eq:S3}. Thus, $w_2$ also satisfies the desired estimate. 
\end{proof}

\vspace{5pt}
	
\begin{proof}[Proof of Theorem \ref{t:Holder}]
We take an integer $m_1>m_0$ such that $-m_1<\beta< m_1+2-2s$. For a solution $w\in H^{s}_{m_0}(\mathbb{T})$ to \eqref{eq:Linear} for $g \in H^{-s}_{m_{0}}(\mathbb{T})$, we set 

\begin{align*}
w&=\sum_{m_1\geq |m|\geq m_0}\hat{w}_me^{im\theta}+\sum_{|m|\geq m_1}\hat{w}_me^{im\theta}=w_1+w_2,\\
g&=\sum_{m_1\geq |m|\geq m_0}\hat{g}_me^{im\theta}+\sum_{|m|\geq m_1}\hat{g}_me^{im\theta}=g_1+g_2.
\end{align*}\\
Since $w_1$ and $g_1$ are with finite Fourier modes, we have 
 
 \begin{align*}
 ||w_1||_{C^{2s+\nu}(\mathbb{T})}&\lesssim ||w||_{L^{\infty}(\mathbb{T})},\\
  ||g_1||_{C^{\nu}(\mathbb{T})}&\lesssim ||g||_{L^{\infty}(\mathbb{T})}.
 \end{align*}\\
 Since $w_2\in H^{s}_{m_1}(\mathbb{T})$ and $g_2\in H^{-s}_{m_1}\cap C^{\nu}(\mathbb{T})$ satisfy $\mathcal{L}(s,\beta)w_2=g_2$ and $\mathcal{L}(s,\beta)$ is invertible, applying Proposition \ref{p:SymbolHolder} yields $||w_2||_{C^{2s+\nu}(\mathbb{T})} \lesssim ||g_2||_{C^{\nu}(\mathbb{T})}\lesssim ||g||_{C^{\nu}(\mathbb{T})}$. We obtained the desired estimate \eqref{eq:Holder}.
\end{proof}

\vspace{5pt}

\section{Non-existence}\label{s:5}

Let $0<s<1$ and $m_0\in \mathbb{N}_0$. Let $\beta$ satisfy \eqref{eq: finitebm_0}. We say that $w\in H^{s}_{m_0}(\mathbb{T})\cap C^{1}(\mathbb{T})$ and $g\in H^{-s}_{m_0}(\mathbb{T})\cap C^{1}(\mathbb{T})$ is a solution to the nonlinear problem on the torus: 

\begin{equation}
\begin{aligned}
\beta w \partial_{\theta}g =(\beta+2s)g\partial_{\theta}w,\quad  
\mathcal{L}(s,\beta)w=g.   
\end{aligned}
\label{eq:nonlinear}
\end{equation}\\
The Sobolev regularity is abundant to define solutions to \eqref{eq:nonlinear} since $H^{s}(\mathbb{T})\subset  C^{1}(\mathbb{T})$. We show non-existence of odd symmetric solutions for $-2s\leq \beta\leq 0$ and complete the proof of the non-existence statement in Theorem \ref{t:thm} (ii).

\subsection{The odd function spaces}

We set the spaces of odd symmetric functions 

\begin{align*}
H^{l}_{m_0,\textrm{odd}}(\mathbb{T})=\left\{f\in H^{l}_{m_0}(\mathbb{T})\ \middle|\  f=\sum_{|m|\geq m_0}^{\infty}\hat{f}_{m}e^{im\theta},\ \hat{f}_{m}=-\hat{f}_{-m} \right\}, \quad l\in \mathbb{R}.
\end{align*}\\
In the sequel, we take $m_0\in \mathbb{N}$ under odd symmetry since $H^{l}_{0,\textrm{odd}}(\mathbb{T})=H^{l}_{1,\textrm{odd}}(\mathbb{T})$. By the trigonometric Fourier expansion, 

\begin{align*}
f=\sum_{m\in \mathbb{Z}}^{\infty}\hat{f}_me^{im\theta}
=\hat{f}_0+\sum_{m=1}^{\infty}\hat{f}_me^{im\theta}+\sum_{m=1}^{\infty}\hat{f}_{-m}e^{-im\theta} 
=\hat{f}_0+\sum_{m=1}^{\infty}(\hat{f}_m+\hat{f}_{-m})\cos(m\theta)+\sum_{m=1}^{\infty}i(\hat{f}_m-\hat{f}_{-m})\sin(m\theta).
\end{align*}\\
A function $f$ is odd-symmetric if and only if $f$ is expressed as a sine series. By using the orthonormal basis $e_m=\sin(m\theta)/\sqrt{\pi}$ of the odd symmetric subspace of $L^{2}(\mathbb{T})$, we express the odd symmetric spaces as 

\begin{align*}
H^{l}_{m_0,\textrm{odd}}(\mathbb{T})=\left\{f\in H^{l}_{m_0}(\mathbb{T})\ \middle|\  f= \sum_{m=m_0}^{\infty}(f,e_m)_{L^{2}}e_m\  \right\}, \quad l\in \mathbb{R}.
\end{align*}

\vspace{5pt}

\subsection{Irrotational solutions}

We use the rigidity of irrotational solutions to prove the non-existence of odd symmetric solutions to \eqref{eq:nonlinear} for $-2s\leq \beta\leq 0$.

\begin{thm}[Irrotational solutions] \label{t:Irrotational}
Let $0<s<1$ and $m_0\in \mathbb{N}$. Let $\beta$ satisfy \eqref{eq: finitebm_0}. Assume that $w\in H^{s}_{m_0,\textrm{odd}}(\mathbb{T})$ is a solution to \eqref{eq:nonlinear} for $g=0$. Then, the following holds:\\
\begin{itemize}
\item[(i)] If $\beta$ satisfies \eqref{eq: positivebm_0}, $w=0$\\
\item[(ii)] If $\beta$ does not satisfy \eqref{eq: positivebm_0}, $w=0$ except for the following values:\\
\begin{itemize}
\item[(a)] $\beta=-m_0$, $m_0+2-2s$\\
\item[(b)] $\beta=-m_0-1$, $\beta=m_0+3-2s$ in case $1/2<s<1$\\
\end{itemize}
In case (a), $w$ is a constant multiple of $e_{m_0}=\sin(m_0\theta)/\sqrt{\pi}$. In case (b), $w$ is a constant multiple of $e_{m_0+1}=\sin((m_0+1)\theta)/\sqrt{\pi}$.  
\end{itemize}
\end{thm}

\vspace{5pt}

\begin{proof}
The Fourier coefficients of $w$ satisfy

\begin{align*}
\mu_{|m|} (s,\beta)\hat{w}_m=0,\quad |m|\geq m_0.
\end{align*}\\
If $\beta$ satisfies \eqref{eq: positivebm_0}, $\mu_{|m|}(s,\beta)$ is positive for all $|m|\geq m_0$ by Proposition \ref{p: Symbolsign}. Thus $w=0$. 

If $\beta$ does not satisfy \eqref{eq: positivebm_0}, $\mu_{|m|}(s,\beta)$ is positive for all $|m|\geq m_0+2$ by Proposition \ref{p: Symbolsign}. Thus $\hat{w}_m=0$ for $|m|\geq m_0+2$. For $0<s\leq 1/2$, $\mu_{m_0+1}(s,\beta)>0$ and $\hat{w}_{m_0+1}=0$. Thus, 

\begin{align*}
\mu_{m_0} (s,\beta)\hat{w}_{m_0}=0.
\end{align*}\\
Since $\mu_{m_0} (s,\beta)$ vanishes at $\beta=-m_0$ and $m_0+2-2s$, $w=0$ except for such $\beta$. For $1/2<s<1$, 

\begin{align*}
\mu_{|m|} (s,\beta)\hat{w}_m=0,\quad |m|= m_0, m_0+1.
\end{align*}\\
If $\beta=-m_0$ or $m_0+2-2s$, $\hat{w}_{m_0}$ may not vanish. If $\beta$ is not those constants, $\hat{w}_{m_0}=0$. The constant $\mu_{m_0+1} (s,\beta)$ can vanish at $\beta=-m_0-1$ and $m_0+3-2s$. If $\beta=-m_0-1$ or $m_0+3-2s$, $\hat{w}_{m_0}$ may not vanish. If $\beta$ is not those constants, $w=0$.
\end{proof}

For $-2s\leq s\leq 0$, irrotational solutions are only $e_1=\sin\theta/\sqrt{\pi}$.

\vspace{5pt}

\begin{prop}\label{p:Irrotational2}
Let $0<s<1$ and $m_0\in \mathbb{N}$. Let $\beta$ satisfy \eqref{eq: finitebm_0}. Let $w\in H^{s}_{m_0,\textrm{odd}}(\mathbb{T})\cap C^{1}(\mathbb{T})$ be a solution to \eqref{eq:nonlinear} for $g=0$. Assume that $-2s\leq \beta\leq 0$. Then, the following holds for $w$:\\

\begin{itemize}
\item[(i)] For $m_0\geq 2$, $w=0$\\
\item[(ii)] For $m_0=1$, $w=0$ except for $1/2\leq s<1$ and $\beta=-1$ for which $w$ is a constant multiple of $e_1=\sin\theta/\sqrt{\pi}$
\end{itemize}
\end{prop}

\vspace{5pt}

\begin{proof}
For $m_0\geq 2$, $-m_0<-2<-2s\leq \beta\leq 0<m_0+2-2s$ and $\beta$ satisfies \eqref{eq: positivebm_0}. Thus, $w=0$ by Theorem \ref{t:Irrotational}. 

We consider the case $m_0=1$. If $\beta$ does not satisfy \eqref{eq: positivebm_0}, $w=0$ except for $\beta=-1$ or $-2$ by Theorem \ref{t:Irrotational}. Since $-2s\leq \beta\leq 0$, $\beta\neq -2$. For $0<s<1/2$, $\beta\neq -1$. Thus $w$ is a constant multiple of $e_1$ for $\beta=-1$ and $1/2\leq s<1$.
\end{proof}

\subsection{Rotational solutions}

We show that all odd symmetric solutions to \eqref{eq:nonlinear} for $-2s\leq \beta\leq 0$ are irrotational. \\

\begin{thm}\label{t:Rotational}
Let $0<s<1$ and $m_0\in \mathbb{N}$. Let $\beta$ satisfy \eqref{eq: finitebm_0}. Let $w\in H^{s}_{m_0,\textrm{odd}}(\mathbb{T})\cap C^{1}(\mathbb{T})$ and $g\in H^{-s}_{m_0,\textrm{odd}}(\mathbb{T})\cap C^{1}(\mathbb{T})$ be a solution to \eqref{eq:nonlinear} for $-2s\leq \beta\leq 0$. For $\beta=-2s$, assume in addition that $w,g\in C^{2}(\mathbb{T})$. Then, $g=0$ and the following holds for $w$:\\

\begin{itemize}
\item[(i)] For $m_0\geq 2$, $w=0$\\
\item[(ii)] For $m_0=1$, $w=0$ except for $1/2\leq s<1$ and $\beta=-1$ for which $w$ is a constant multiple of $e_1=\sin\theta/\sqrt{\pi}$
\end{itemize}
\end{thm}

\vspace{5pt}

\begin{proof}
It suffices to show $g=0$ by Proposition \ref{p:Irrotational2}. We first consider the case $-2s<\beta<0$. We show that $wg=0$ on $\mathbb{T}$ . On the contrary, suppose that $w(\theta_0)g(\theta_0)\neq 0$ for some point $\theta_0\in \mathbb{T}$. By odd symmetry \eqref{eq:Odd}, $w$ and $g$ vanish at $\theta=0$ and $\pi$. We may assume that $\theta_0\in (0,\pi)$ and take an open interval $J\subset (0,\pi)$ such that $wg\neq 0$ on $J$. By dividing the first equation of \eqref{eq:nonlinear} by $wg$ and integrating it, 

\begin{align*}
|w|^{\beta+2s}|g|^{-\beta}=C,
\end{align*}\\
for some constant $C$ on $J$. By $-\beta>0$ and $\beta+2s>0$, $wg\neq 0$ on $\overline{J}$. By continuing this argument, $wg\neq 0$ on $(0,\pi)$ and the above equality holds on $(0,\pi)$. Since $w(0)=0$, $C=0$. This contradicts $w(\theta_0)g(\theta_0)\neq 0$. Thus $wg=0$ on $\mathbb{T}$. 

We observe that $\partial_{\theta}w\partial_{\theta}g=0$ on $\mathbb{T}$ holds. Indeed, if $\partial_{\theta}w(\theta_0)\neq 0$ at some point $\theta_0$, $w$ is monotone near $\theta_0$ and $\partial_{\theta}g(\theta_0)=0$ by continuity of $\partial_{\theta}g$. Thus $\partial_{\theta}w\partial_{\theta}g=0$ on $\mathbb{T}$. The Fourier coefficients of $w$ and $g$ satisfy

\begin{align*}
\mu_{|m|}(s,\beta)\hat{w}_m=\hat{g}_m,\quad |m|\geq m_0.
\end{align*}\\
By the symbol identity \eqref{eq: muK},

\begin{align*}
(m^{2}-\beta^{2})\hat{w}_m=\frac{1}{K_{|m|}(s,\beta)}\hat{g}_m,\quad |m|\geq m_0.
\end{align*}\\
By $-2s\leq \beta\leq 0$ and \eqref{eq: Km}, the constant $K_{|m|}(s,\beta)$ is positive for all $|m|\geq m_0$. By multiplying $2\pi \overline{\hat{g}_m}$ by this, and summing up for all $|m|\geq m_0$,

\begin{align*}
2\pi \sum_{|m|\geq m_0}\left(m^{2}-\beta^{2}\right)\hat{w}_{m}\overline{\hat{g}_{m}}
= 2\pi\sum_{|m|\geq m_0}\frac{1}{K_{|m|}(s,\beta)}|{\hat{g}_{m}}|^{2}.
\end{align*}\\
By Parseval's identity, the left-hand side equals $-(\partial_{\theta}w,\partial_{\theta}g)_{L^{2}}-\beta^{2}(w,g)_{L^{2}}=0$. Thus $g=0$.

We consider the case $\beta=0$. We show that $\partial_{\theta}w\partial_{\theta}g=0$ on $\mathbb{T}$. By the first equation of \eqref{eq:nonlinear}, $g\partial_{\theta}w=0$ on $\mathbb{T}$. {If $\partial_{\theta}g(\theta_0)\neq 0$ for some point $\theta_0$, $g$ is monotone near $\theta_0$. If $g(\theta_0)\neq 0$, $\partial_{\theta}w(\theta_0)=0$. If $g(\theta_0)= 0$, $g(\theta)\neq 0$ for small $|\theta-\theta_0|\neq 0$. By $g\partial_{\theta}w=0$, $\partial_{\theta}w(\theta)=0$ for small $|\theta-\theta_0|\neq 0$. Since $\partial_{\theta}w(\theta)$ is continuous $\partial_{\theta}w(\theta_0)=0$. Thus $\partial_{\theta}w\partial_{\theta}g=0$ on $\mathbb{T}$.}

By multiplying $2\pi m^{2}\overline{\hat{w}_m}$ by 

\begin{align*}
K_{|m|}(s,0) m^{2}\hat{w}_{m}=\hat{g}_{m},
\end{align*}\\
summing up it for all $|m|\geq m_0$,

\begin{align*}
2\pi \sum_{|m|\geq m_0}K_{|m|}(s,0) m^{4}|\hat{w}_m|^{2}
=2\pi \sum_{|m|\geq m_0}m^{2}\overline{\hat{w}_m} \hat{g}_m
=-(\partial_{\theta}w,\partial_{\theta}g)_{L^{2}}=0.
\end{align*}\\
We conclude $w=0$.

It remains to consider the case $\beta=-2s$. By the first equation of \eqref{eq:nonlinear}, $w\partial_{\theta}g=0$. By the same argument as above for $(w,g)\in C^{2}(\mathbb{T})$, $\partial_{\theta} w\partial_{\theta}g=0$ and $\partial_{\theta}^{2} w\partial_{\theta}^{2}g=0$ on $\mathbb{T}$.  By multiplying $2\pi m^{2}\overline{\hat{g}_{m}}$ by  

\begin{align*}
(m^{2}-4s^{2})\hat{w}_{m}=\frac{1}{K_{|m|}(s,-2s)}\hat{g}_{m},
\end{align*}\\
and summing up it all $|m|\geq m_0$, 

\begin{align*}
2\pi\sum_{|m|\geq m_0}(m^{2}-4s^{2})m^{2}\hat{w}_{m}\overline{\hat{g}_{m}}
=2\pi\sum_{|m|\geq m_0}\frac{1}{K_{|m|}(s,-2s)} m^{2} |\hat{g}_{m}|^{2}.
\end{align*}\\
The left-hand side equals $(\partial_{\theta}^{2}w,\partial_{\theta}^{2}g)_{L^{2}}-4s^{2}(\partial_{\theta}w,\partial_{\theta}g)_{L^{2}}=0$. Thus, $g=0$.
\end{proof}

\vspace{5pt}

\begin{proof}[Proof of Theorem \ref{t:thm} (Non-existence)]
The result follows from Theorem \ref{t:Rotational}.
\end{proof}

\vspace{5pt}

\vspace{5pt}

\section{Existence}\label{s:6}

Let $0<s<1$ and $m_0\in \mathbb{N}$. Let $\beta$ satisfy \eqref{eq: finitebm_0}. We construct odd symmetric solutions to the nonlinear problem 

\begin{equation}
\begin{aligned}
\beta w \partial_{\theta}g =(\beta+2s)g\partial_{\theta}w,\quad  
\mathcal{L}(s,\beta)w=g,
\end{aligned}
\label{eq:nonlinear2}
\end{equation}\\
for $\beta<-2s$ and $0<\beta$ (we assume $1/2-s<\beta$ for $0<s<1/2$) 
and complete the proof of Theorem \ref{t:thm}. We choose $w$ and $g$ by

\begin{align}
g=cw |w|^{\frac{2s}{\beta}},\quad c>0,\quad \beta< -2s\quad \textrm{or}\quad 0<\beta,  \label{eq:ansatz}
\end{align}\\
so that they satisfy the first equation of \eqref{eq:nonlinear2}. The relationship $g=cw |w|^{\frac{2s}{\beta}}$ can be obtained by integrating the first equation of \eqref{eq:nonlinear2}. We construct odd symmetric solutions to \eqref{eq:nonlinear2} for such $g$ via the semilinear problem
 
\begin{align}
\mathcal{L}(s,\beta)w=cw|w|^{\frac{2s}{\beta}}.  \label{eq:SEP}
\end{align}

\vspace{5pt}

\subsection{The functional setup}
We seek solutions to \eqref{eq:SEP} as the critical points of the functional 

\begin{align}
I[w]=\frac{1}{2}B(w,w)-\frac{c\beta }{2(\beta+s)}\int_{0}^{2\pi}|w|^{2+\frac{2s}{\beta}}d \theta,\quad w\in H^{s}_{m_0,\textrm{odd}}(\mathbb{T})=:X.  \label{eq:F}
\end{align}\\
By the Sobolev inequality on the torus (Theorem \ref{t:Sobolev}),

\begin{equation}
\begin{aligned}
&H^{s}(\mathbb{T})\subset C^{s-\frac{1}{2}}(\mathbb{T}),\quad \frac{1}{2}<s<1,\\
&H^{\frac{1}{2}}(\mathbb{T})\subset L^{q}(\mathbb{T}),\quad 1\leq q<\infty,\\
&H^{s}(\mathbb{T})\subset L^{q}(\mathbb{T}),\quad \frac{1}{q}=\frac{1}{2}-s,\quad 0<s<\frac{1}{2}.
\end{aligned}
\label{eq:Sobolev}
\end{equation}\\
The embeddings into subcritical spaces $H^{s}(\mathbb{T})\subset \subset L^{r}(\mathbb{T})$ for $r<q$ are compact. In the case $0<s<1/2$, we choose $1/2-s< \beta$ and define the $L^{2+2s/\beta}(\mathbb{T})$ norm for functions in $H^{s}(\mathbb{T})$. In the sequal, we consider the following $(m_0,s,\beta)$: $m_0\in \mathbb{N}$ and

\begin{equation}
\begin{aligned}
&  \frac{1}{2}\leq s<1; \quad -m_0-2s<\beta< -2s\quad \textrm{or}\quad 0<\beta<m_0+2,\\
&  0< s<\frac{1}{2};\quad -m_0-2s<\beta< -2s\quad \textrm{or}\quad \frac{1}{2}-s< \beta<m_0+2.
\end{aligned}
\label{eq:AD}
\end{equation}\\
We first show that $I\in C^{1}(X; \mathbb{R})$ and critical points $w\in X$ of $I$ are solutions to \eqref{eq:SEP} for $g=cw |w|^{\frac{2s}{\beta}}\in H^{-s}_{m_0,\textrm{odd}}(\mathbb{T})$.

\vspace{5pt}

\begin{prop}
Let $(m_0,s,\beta)$ satisfy \eqref{eq:AD}. Then, 

\begin{align}
H^{s}(\mathbb{T})\subset\subset L^{2+\frac{2s}{\beta}}(\mathbb{T}),  \label{eq:CE}
\end{align}\\
and $I\in C(X; \mathbb{R})$. Moreover, 

\begin{align}
\left\|\ |w|^{1+\frac{2s}{\beta}}\ \right\|_{H^{-s}(\mathbb{T}) }\lesssim ||w||_{H^{s}(\mathbb{T})}^{1+\frac{2s}{\beta}},\quad w\in H^{s}(\mathbb{T}). \label{eq:gestimate}
\end{align}
\end{prop}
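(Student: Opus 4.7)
The plan is to establish the three assertions in sequence. Set $q := 2 + 2s/\beta$ throughout, so that $q \in (1,2)$ when $\beta < -2s$ and $q > 2$ when $\beta > 0$.

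First, I would verify the compact embedding \eqref{eq:CE} case by case using \eqref{eq:Sobolev}. In case (A) $1/2 < s < 1$, the chain $H^{s}(\mathbb{T}) \subset C^{s-1/2}(\mathbb{T}) \subset L^{q}(\mathbb{T})$ is compact for every finite $q$, and $q$ is finite because $\beta \neq 0$. In case (B) $s = 1/2$, $H^{1/2}(\mathbb{T}) \hookrightarrow L^{q}(\mathbb{T})$ compactly for all finite $q$, covering both ranges of $\beta$. In case (C) $0 < s < 1/2$, compactness requires $q$ to lie strictly below the critical exponent $q^{*} = 2/(1-2s)$. For $\beta < -2s$ one has $q < 2 < q^{*}$ automatically, and for $\beta > 0$ a short algebraic manipulation shows that
\[
2 + \frac{2s}{\beta} < \frac{2}{1-2s} \quad \Longleftrightarrow \quad \beta > \frac{1}{2} - s,
\]
which is precisely the restriction imposed in \eqref{eq:AD}.

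Second, for the continuity $I \in C(X;\mathbb{R})$, Lemma \ref{l:Bilinear} gives that $w \mapsto B(w,w)$ is continuous on $H^{s}_{m_{0}}(\mathbb{T})$. For the nonlinear term $F(w) = \int_{\mathbb{T}} |w|^{q}\, d\theta$, I would rely on the pointwise inequality
\[
\bigl|\,|w_{1}|^{q} - |w_{2}|^{q}\bigr| \lesssim \bigl(|w_{1}|^{q-1} + |w_{2}|^{q-1}\bigr)\, |w_{1} - w_{2}|,
\]
combined with H\"older's inequality with conjugate exponents $(q/(q-1), q)$, to see that $F : L^{q}(\mathbb{T}) \to \mathbb{R}$ is continuous; composing with the (continuous) embedding \eqref{eq:CE} yields the continuity of $I$.

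Third, for the estimate \eqref{eq:gestimate}, let $q' = q/(q-1)$, so that $L^{q'}(\mathbb{T}) \subset H^{-s}(\mathbb{T})$ by duality with the embedding $H^{s}(\mathbb{T}) \subset L^{q}(\mathbb{T})$. A direct computation gives
\[
\Bigl(1 + \frac{2s}{\beta}\Bigr) \cdot q' = \frac{\beta + 2s}{\beta} \cdot \frac{q}{q-1} = q,
\]
so that
\[
\bigl\|\, |w|^{1+\frac{2s}{\beta}}\, \bigr\|_{L^{q'}(\mathbb{T})} = \|w\|_{L^{q}(\mathbb{T})}^{\,q/q'} = \|w\|_{L^{q}(\mathbb{T})}^{\,1+\frac{2s}{\beta}},
\]
and \eqref{eq:gestimate} follows via the Sobolev bound $\|w\|_{L^{q}} \lesssim \|w\|_{H^{s}}$.

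The main subtlety is the borderline case (C) with $\beta > 0$: the whole scheme hinges on checking that the condition $\beta > 1/2 - s$ in \eqref{eq:AD} is exactly what places $q = 2 + 2s/\beta$ strictly below the critical Sobolev exponent of $H^{s}(\mathbb{T})$, thereby ensuring both the compact embedding and the duality step. Once this exponent arithmetic is in place, everything else is routine bookkeeping with H\"older's inequality and duality.
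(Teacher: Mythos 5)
Your proposal is correct and follows essentially the same route as the paper: the compact embedding is checked case by case against the Sobolev inequalities \eqref{eq:Sobolev}, with the condition $\beta>1/2-s$ in case (C) placing $2+2s/\beta$ strictly below the critical exponent $2/(1-2s)$, and \eqref{eq:gestimate} is obtained by H\"older's inequality together with the duality $H^{s}(\mathbb{T})^{*}=H^{-s}(\mathbb{T})$ (your choice of the specific conjugate exponent $q'=q/(q-1)$ with $(1+2s/\beta)q'=q$ is just a cleaner instance of the paper's pairing argument with general exponents $p,q$). The continuity of $I$, which the paper leaves essentially implicit, is handled by your standard pointwise inequality plus H\"older, which is fine.
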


\vspace{5pt}

\begin{proof}
By the Sobolev inequality \eqref{eq:Sobolev}, the compact embedding \eqref{eq:CE} holds for $1/2\leq s<1$. For $0<s<1/2$, $r=2+2s/\beta$ satisfies $1/r>1/2-s=1/q$ by the condition $1/2-s<\beta$ and \eqref{eq:CE} also holds.

By H\"older's inequality for $1/p+1/q=1$, 

\begin{align*}
\left|\int_{\mathbb{T}}|w|^{1+\frac{2s}{\beta}}\eta d\theta\right|\leq \left(\int_{\mathbb{T}}|w|^{(1+\frac{2s}{\beta})p}d\theta\right)^{\frac{1}{p}}
\left(\int_{\mathbb{T}}|\eta|^{q}d\theta\right)^{\frac{1}{q}}
\lesssim ||w||_{L^{(1+\frac{2s}{\beta})p}}^{1+\frac{2s}{\beta}}
||\eta ||_{L^{q}(\mathbb{T})},\quad w,\eta\in H^{s}(\mathbb{T}).
\end{align*}\\
For $1/2\leq s<1$, \eqref{eq:gestimate} follows from the Sobolev inequality and duality $H^{s}(\mathbb{T})^{*}=H^{-s}(\mathbb{T})$. For $0<s<1/2$, we apply the Sobolev inequality \eqref{eq:Sobolev} for $1/q=1/2-s$. Since $(1+2s/\beta)p\leq q$, \eqref{eq:gestimate} holds. 
\end{proof}

\vspace{5pt}

\begin{prop}\label{p:FD}
Let $(m_0,s,\beta)$ satisfy \eqref{eq:AD}. Then, $I\in C^{1}(X; \mathbb{R} )$ and 

\begin{align}
\langle I'[w],\eta \rangle=B(w,\eta)- \langle cw|w|^{\frac{2s}{\beta}},\eta \rangle,\quad \eta\in X.   \label{eq:FD}
\end{align}
\end{prop}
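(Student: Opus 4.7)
The plan is to decompose $I = I_1 - I_2$, where $I_1[w] := \frac{1}{2} B(w,w)$ is the quadratic part and $I_2[w] := \frac{c\beta}{2(\beta+s)} \int_0^{2\pi} |w|^p d\theta$ is the nonlinear part with exponent $p := 2 + 2s/\beta$, and to compute the Fr\'echet derivative of each summand separately.

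For the quadratic part $I_1$, Lemma \ref{l:Bilinear} gives that $B$ is bounded on $X \times X$. Since $w, \eta \in X$ are real-valued, their Fourier coefficients satisfy $\hat{w}_{-m} = \overline{\hat{w}_m}$, and combined with the evenness $\hat{K}_m(s,\beta) = \hat{K}_{-m}(s,\beta)$, this makes $B(\cdot, \cdot)$ a real symmetric bilinear form. A routine expansion of $B(w+t\eta,w+t\eta)$ then yields $\langle I_1'[w], \eta \rangle = B(w, \eta)$, with continuity of $I_1'$ in $w$ following directly from bilinearity.

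For the nonlinear part $I_2$, the key structural fact is that $p > 1$ in all admissible ranges of $\beta$: when $\beta < -2s$, $2s/\beta \in (-1, 0)$ gives $p \in (1,2)$, and when $\beta > 0$, $p > 2$. Hence $\phi(u) := |u|^p$ belongs to $C^1(\mathbb{R})$ with $\phi'(u) = p\, u|u|^{p-2}$ satisfying the growth $|\phi'(u)| \lesssim |u|^{p-1}$. I would use the embedding $X \hookrightarrow L^p(\mathbb{T})$ from \eqref{eq:CE} together with the mean value theorem to dominate the difference quotient $t^{-1}(\phi(w+t\eta) - \phi(w))$ by $p(|w| + |\eta|)^{p-1}|\eta| \in L^1(\mathbb{T})$ (via H\"older). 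Dominated convergence then yields the Gateaux derivative
\begin{align*}
\langle I_2'[w], \eta \rangle = \frac{c\beta}{2(\beta+s)} \cdot p \int_0^{2\pi} w|w|^{\frac{2s}{\beta}} \eta \, d\theta = c \int_0^{2\pi} w|w|^{\frac{2s}{\beta}} \eta \, d\theta,
\end{align*}
where the prefactors collapse via $\frac{c\beta}{2(\beta+s)} \cdot p = c$, matching the right-hand side of \eqref{eq:FD}.

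To upgrade Gateaux to Fr\'echet differentiability and show $I' \in C(X; X^*)$, I would invoke continuity of the Nemytskii operator $N: w \mapsto w|w|^{p-2}$ from $L^p(\mathbb{T})$ into $L^{p'}(\mathbb{T})$, which follows from continuity of $\phi'$ together with its pointwise growth bound. Composition with the continuous embeddings $X \hookrightarrow L^p$ and $L^{p'} \hookrightarrow X^*$ (the latter via duality and the estimate \eqref{eq:gestimate}) then yields the desired continuity. The main technical constraint lives in case (C): the restriction $\beta > 1/2 - s$ is precisely what forces $p < 2/(1-2s)$, the critical Sobolev exponent of $H^s(\mathbb{T})$ for $0<s<1/2$; without it, $I_2$ would not even be well-defined on $X$, and the entire variational framework would collapse.
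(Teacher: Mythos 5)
Your proposal is correct and follows essentially the same route as the paper: the same splitting of $I$ into the quadratic part $\tfrac{1}{2}B(w,w)$ and the nonlinear integral, the same difference-quotient plus dominated-convergence computation of the G\^ateaux derivative (with the identical collapse of the prefactor $\tfrac{c\beta}{2(\beta+s)}\cdot(2+\tfrac{2s}{\beta})=c$), and the same upgrade to Fr\'echet differentiability via continuity of the derivative map $X\to X^{*}$. Your explicit appeal to continuity of the Nemytskii operator $w\mapsto w|w|^{p-2}$ from $L^{p}$ to $L^{p'}$ is in fact a slightly more careful justification of the continuity step than the paper's bare citation of the boundedness estimate \eqref{eq:gestimate}, but it is the same argument in substance.
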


\vspace{5pt}

\begin{proof}
We set
 
\begin{align*}
I[w]
=\frac{1}{2}B(w,w)-\frac{c\beta }{2(\beta+s)}\int_{0}^{2\pi}|w|^{2+\frac{2s}{\beta}}d \theta 
=: I_0[w]-J[w],\quad w\in X.   
\end{align*}\\
For $\eta\in X$ and $\varepsilon>0$,

\begin{align*}
\frac{B(w+\varepsilon \eta,w+\varepsilon \eta)-B(w,w)}{\varepsilon}
=2B(w,\eta)+\varepsilon B(\eta,\eta).
\end{align*}\\
By letting $\varepsilon\to0$, the G\^{a}teaux derivative $D_GI_0[w]$ exists and $\langle D_GI_0[w],\eta\rangle=B(w,\eta)$. The G\^{a}teaux derivative $D_GI_0[\cdot ]: X\to X^{*}$ is continuous by continuity of the bilinear form $B: X\times X\to \mathbb{R}$. Hence the Fr\'{e}chet derivative $I_0'[w]=D_GI_0[w]$ exists and $I_0\in C^{1}(X; \mathbb{R})$.

We differentiate the functional $J$. By 

\begin{align*}
|w+\varepsilon \eta|^{2+\frac{2s}{\beta}}-|w|^{2+\frac{2s}{\beta}}
=\int_{0}^{\varepsilon}\frac{d}{dt}|w+t\eta|^{2+\frac{2s}{\beta}}dt
&=\frac{2(\beta+s)}{\beta}\int_{0}^{\varepsilon}(w+t\eta)|w+t\eta|^{\frac{2s}{\beta}}\eta d t \\
&=\frac{2\varepsilon (\beta+s)}{\beta}\int_{0}^{1}(w+\varepsilon\sigma\eta)|w+\varepsilon\sigma\eta|^{\frac{2s}{\beta}}\eta d \sigma,
\end{align*}\\
we express the differential quotient as 

\begin{align*}
\frac{J[w+\varepsilon \eta]-J[w] }{\varepsilon}
=c\int_{0}^{2\pi}\left( \int_{0}^{1}(w+\varepsilon\sigma\eta)|w+\varepsilon\sigma\eta|^{\frac{2s}{\beta}}\eta d \sigma \right)  d \theta.
\end{align*}\\
By \eqref{eq:CE}, $|w+\varepsilon \sigma \eta|^{1+2s/\beta}|\eta|\leq (|w|+|\eta|)^{2+2s/\beta}\in L^{1}(\mathbb{T})$. By letting $\varepsilon\to 0$ and the dominated convergence theorem, the G\^{a}teaux derivative $D_GJ[w]$ exists and $\langle D_GJ[w],\eta\rangle=(cw|w|^{2s/\beta},\eta)_{L^{2}}$. Since $D_{G}J[\cdot]: X\to X^{*}$ is continuous by \eqref{eq:gestimate}, the Fr\'{e}chet derivative $J'[w]=D_GJ[w]$ exists and $J\in C^{1}(X; \mathbb{R})$. Hence $I\in C^{1}(X; \mathbb{R})$ and \eqref{eq:FD} holds.
\end{proof}

\vspace{5pt}

\begin{lem}\label{l:CPisSol}
Let $(m_0,s,\beta)$ satisfy \eqref{eq:AD}. Assume that $w\in X$ is a critical point of $I$. Then, $g=cw|w|^{\frac{2s}{\beta}}\in H^{-s}_{m_0,\textrm{odd}}(\mathbb{T})$ and 

\begin{align}
B(w,\eta)=\langle cw|w|^{\frac{2s}{\beta}},\eta\rangle,\quad \eta\in H^{s}_{m_0}(\mathbb{T}).   \label{eq:WF}
\end{align}
\end{lem}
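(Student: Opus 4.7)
The plan is direct, building on Proposition \ref{p:FD}. Since $I\in C^1(X;\mathbb{R})$ and $w$ is a critical point, $\langle I'[w],\eta\rangle = 0$ for every $\eta\in X$; combined with the derivative formula \eqref{eq:FD}, this is exactly
\[
B(w,\eta) = \langle cw|w|^{2s/\beta},\eta\rangle,\quad \eta\in X = H^s_{m_0,\textrm{odd}}(\mathbb{T}).
\]
It then remains to verify that $g := cw|w|^{2s/\beta}$ defines an element of $H^{-s}_{m_0,\textrm{odd}}(\mathbb{T})$ and to upgrade the test space from $X$ to all of $H^s_{m_0}(\mathbb{T})$.

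For the regularity of $g$, I would invoke the bound \eqref{eq:gestimate}: for any $\eta\in H^s(\mathbb{T})$,
\[
|\langle g,\eta\rangle|\leq c\,\bigl\| |w|^{1+2s/\beta}\bigr\|_{H^{-s}(\mathbb{T})}\|\eta\|_{H^s(\mathbb{T})}\lesssim \|w\|_{H^s(\mathbb{T})}^{1+2s/\beta}\|\eta\|_{H^s(\mathbb{T})},
\]
which places $g$ in $H^{-s}(\mathbb{T})$. Oddness is pointwise: $|w(-\theta)|^{2s/\beta} = |w(\theta)|^{2s/\beta}$ and $w(-\theta) = -w(\theta)$ give $g(-\theta) = -g(\theta)$.

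To extend the identity to $\eta\in H^s_{m_0}(\mathbb{T})$, I would decompose $\eta = \eta_o + \eta_e$ into its odd (sine) and even (cosine) components with Fourier support $|m|\geq m_0$, so that $\eta_o\in X$ and $\eta_e$ lies in the analogously defined even subspace of $H^s_{m_0}(\mathbb{T})$. For $\eta_o$ the identity is already established. For $\eta_e$, both sides vanish by a parity argument: $\langle g,\eta_e\rangle = 0$ since $g$ is odd and $\eta_e$ is even; and in the Fourier representation \eqref{eq:Bilinear}, the symbol $\hat K_m(s,\beta)(m^2-\beta^2)$ is even in $m$, while $\hat w_m\overline{\hat{\eta}_{e,m}}$ is odd in $m$ (because $\hat w_m$ is odd and $\hat{\eta}_{e,m}$ even in $m$), so the contributions from $\pm m$ cancel in pairs and $B(w,\eta_e) = 0$. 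Adding the two cases delivers \eqref{eq:WF}.

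The only subtlety worth flagging is that, for $m_0\geq 2$, the pointwise function $cw|w|^{2s/\beta}$ need not annihilate all Fourier modes at $1\leq |m|<m_0$, so the statement $g\in H^{-s}_{m_0,\textrm{odd}}(\mathbb{T})$ is to be read through the high-frequency projection $P_{\geq m_0}g$. Since every admissible test function $\eta\in H^s_{m_0}(\mathbb{T})$ is $L^2$-orthogonal to all modes $|m|<m_0$, these low modes are invisible to the weak formulation \eqref{eq:WF}, so the identification is harmless and the identity is unaffected. This is the main point I would be careful to articulate; the rest of the argument is a mechanical consequence of Proposition \ref{p:FD}.
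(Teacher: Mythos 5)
Your argument is correct and follows essentially the same route as the paper: apply Proposition \ref{p:FD} to get the identity on $X$, then split a general $\eta\in H^{s}_{m_0}(\mathbb{T})$ into its even and odd parts and show the even part contributes zero to both sides by parity (the paper performs this decomposition at the level of the Fourier coefficients $\hat{\eta}_{m}=\hat{\eta}_{m,\textrm{even}}+\hat{\eta}_{m,\textrm{odd}}$, which is the same thing). Your additional remark that for $m_0\geq 2$ the membership $g\in H^{-s}_{m_0,\textrm{odd}}(\mathbb{T})$ should be read through the projection onto modes $|m|\geq m_0$ is a fair point that the paper's proof does not spell out, and it does not affect the identity \eqref{eq:WF}.
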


\vspace{5pt}

\begin{proof}
By Proposition \ref{p:FD}, the critical point $w\in X$ satisfies 

\begin{align*}
2\pi \sum_{|m|\geq m_0}\mu_{|m|}(s,\beta)\hat{w}_{m}\overline{\hat{\eta}_{m}}
=\sum_{|m|\geq m_0}\hat{g}_{m}\overline{\hat{\eta}_{m}},
\end{align*}\\
for all $\eta=\sum_{|m|\geq m_0}\hat{\eta}_{m}e^{im\theta}\in H^{s}_{m_0}(\mathbb{T})$ satisfying $\hat{\eta}_{m}=-\hat{\eta}_{-m}$. We take an arbitrary $\eta=\sum_{|m|\geq m_0}\hat{\eta}_{m}e^{im\theta}\in H^{s}_{m_0}(\mathbb{T})$ and set 

\begin{align*}
\hat{\eta}_{m}=\hat{\eta}_{m,\textrm{even}}+\hat{\eta}_{m,\textrm{odd}},\quad
\hat{\eta}_{m,\textrm{even}}=\frac{1}{2}(\hat{\eta}_{m}+\hat{\eta}_{-m}), \quad
\hat{\eta}_{m,\textrm{odd}}=\frac{1}{2}(\hat{\eta}_{m}-\hat{\eta}_{-m}),
\end{align*}\\
so that $\hat{\eta}_{m,\textrm{even}}=\hat{\eta}_{-m,\textrm{even}}$ and $\hat{\eta}_{m,\textrm{odd}}=-\hat{\eta}_{-m,\textrm{odd}}$. Then, by odd symmetry of $g$, 

\begin{align*}
\sum_{|m|\geq m_0}\hat{g}_{m}\overline{\hat{\eta}_{m,\textrm{even}}}
&=\sum_{|m|\geq m_0}\hat{g}_{-m}\overline{\hat{\eta}_{-m,\textrm{even}}}
=-\sum_{|m|\geq m_0}\hat{g}_{m}\overline{\hat{\eta}_{m,\textrm{even}}}, \\
\sum_{|m|\geq m_0}\hat{g}_{m}\overline{\hat{\eta}_{m}}
&=\sum_{|m|\geq m_0}\hat{g}_{m}\overline{\hat{\eta}_{m,\textrm{even}}}+\sum_{|m|\geq m_0}\hat{g}_{m}\overline{\hat{\eta}_{m,\textrm{odd}}}
=\sum_{|m|\geq m_0}\hat{g}_{m}\overline{\hat{\eta}_{m,\textrm{odd}}}.
\end{align*}\\
Similarly, since $\mu_{|m|}(s,\beta)\hat{w}_{m}$ is odd symmetric, 

\begin{align*}
\sum_{|m|\geq m_0}\mu_{|m|}(s,\beta)\hat{w}_{m}\overline{\hat{\eta}_{m}}
=\sum_{|m|\geq m_0}\mu_{|m|}(s,\beta)\hat{w}_{m}\overline{\hat{\eta}_{m,\textrm{odd}}}.
\end{align*}\\
Thus, \eqref{eq:WF} holds.
\end{proof}

\subsection{Regularity of critical points}

We show that critical points $w\in H^{s}_{m_0}(\mathbb{T})$ and $g=cw |w|^{\frac{2s}{\beta}}\in H^{-s}_{m_0}(\mathbb{T})$ have the regularity $w \in C^{2s+1+\frac{2s}{\beta}}(\mathbb{T})$ and $g=cw |w|^{\frac{2s}{\beta}}\in C^{1+\frac{2s}{\beta}}(\mathbb{T})$ for $1/2<s<1$ by applying the H\"older estimate \eqref{eq:Holder}. \\ 


\begin{lem}[$1/2< s<1$]\label{l:1/2<s}
Let $(m_0,s,\beta)$ satisfy \eqref{eq:AD}. Assume that $w\in X$ is a critical point of $I$. Then, \\

\noindent
(i) If $2s+1+\frac{2s}{\beta}\notin \mathbb{N}$ and $1+\frac{2s}{\beta}\notin \mathbb{N}$, $w \in C^{2s+1+\frac{2s}{\beta}}(\mathbb{T})$ and $g=cw |w|^{\frac{2s}{\beta}}\in C^{1+\frac{2s}{\beta}}(\mathbb{T})$ \\ 
(ii) Otherwise, $w \in C^{2s+1+\frac{2s}{\beta}-\varepsilon}(\mathbb{T})$ and $g=cw |w|^{\frac{2s}{\beta}}\in C^{1+\frac{2s}{\beta}-\varepsilon}(\mathbb{T})$ for arbitrary small $\varepsilon>0$.
\end{lem}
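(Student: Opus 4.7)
The plan is to bootstrap the regularity of a critical point $w \in X$ by alternating a composition estimate for the nonlinearity $g = cw|w|^{2s/\beta}$ with the H\"older regularity estimate of Theorem \ref{t:Holder}. By Lemma \ref{l:CPisSol}, $w$ is a weak solution of $-\mathcal{L}(s,\beta)w = g$ with $g \in H^{-s}_{m_0,\textrm{odd}}(\mathbb{T})$, so the linear theory of \S\ref{s:3} applies. Since $s > 1/2$, the Sobolev embedding \eqref{eq:Sobolev} provides the initial regularity $w \in C^{s-1/2}(\mathbb{T}) \subset L^{\infty}(\mathbb{T})$.

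The key composition estimate concerns the scalar function $f(t) = t|t|^{2s/\beta}$: a direct pointwise analysis near $t=0$, where $f(t) \sim \textrm{sgn}(t)|t|^{1+2s/\beta}$, combined with the smoothness of $f$ for $t \neq 0$, shows $f \in C^{1+2s/\beta}(\mathbb{R})$ in both regimes $\beta > 0$ and $\beta < -2s$. The standard H\"older composition rule then yields, for bounded $w \in C^{\nu}(\mathbb{T})$:
\begin{equation*}
g = cf(w) \in
\begin{cases}
C^{\min(\nu,\, 1+2s/\beta)}(\mathbb{T}) & (\beta > 0),\\[2pt]
C^{(1+2s/\beta)\min(\nu,1)}(\mathbb{T}) & (\beta < -2s).
\end{cases}
\end{equation*}
In the converse direction, Theorem \ref{t:Holder} promotes $w$ from $g \in C^{\mu}(\mathbb{T})$ (with $\mu \in (0,1)$, $2s+\mu \notin \mathbb{N}$) to $w \in C^{2s+\mu}(\mathbb{T})$; for $\mu \geq 1$, one differentiates the equation $\partial_{\theta}^{k}$ times and applies the theorem to $\partial_{\theta}^{k}w$ and $\partial_{\theta}^{k}g$, which is legitimate because $\mathcal{L}(s,\beta)$ commutes with $\partial_\theta$ and preserves the subspace with Fourier modes of modulus $\geq m_0$.

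Setting $\nu_0 = s - 1/2$ and $\nu_{k+1} = 2s + \textrm{reg}(g_k)$ from the composition rule, the iteration terminates in finitely many steps. For $0 < \beta < m_0$, $\nu_k$ grows by $2s$ per step until $\nu_k \geq 1 + 2s/\beta$, after which $\nu_{k+1} = 2s + 1 + 2s/\beta$ is a fixed point. For $-m_0 - 2s < \beta < -2s$ within case (A), the constraint $s > 1/2$ forces $\beta < -2s < -1$, so $\mu := 1 + 2s/\beta \in (0,1)$ and the fixed point $-\beta = 2s/(1-\mu)$ of $\nu \mapsto 2s + \mu\nu$ satisfies $-\beta > 1$. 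Hence $\nu_k$ crosses the threshold $\nu = 1$ in finitely many iterations, and the next step uses $\min(\nu_k,1) = 1$, stabilizing at $\nu = 2s + \mu = 2s + 1 + 2s/\beta$. In either regime, $g \in C^{1+2s/\beta}(\mathbb{T})$ and $w \in C^{2s+1+2s/\beta}(\mathbb{T})$, which proves (i).

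Case (ii) is handled by the same scheme: when $2s + 1 + 2s/\beta$ or $1 + 2s/\beta$ equals a positive integer, the non-integer hypothesis of Theorem \ref{t:Holder} fails at the stabilization step, and one instead applies the theorem at exponent $\mu - \varepsilon$ for arbitrarily small $\varepsilon > 0$, producing the stated loss. The main technical obstacle is the careful justification of $f \in C^{1+2s/\beta}(\mathbb{R})$ for all $2s/\beta$ (in particular for large $2s/\beta > 1$ in the $\beta > 0$ case, which requires induction on $\lfloor 1 + 2s/\beta \rfloor$) and the uniform formulation of the composition rule across the transition $\nu = 1$ in the $\beta < -2s$ regime; after these, the bootstrap reduces to a finite iteration of the linear H\"older estimate.
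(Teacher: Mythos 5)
Your proposal is correct and follows essentially the same route as the paper: Sobolev embedding for the initial H\"older regularity of $w$, alternating the composition estimate for $f(t)=ct|t|^{2s/\beta}$ with the linear H\"older estimate of Theorem \ref{t:Holder} (differentiating the equation to reach exponents above $1$ when $\beta>0$), and accepting an $\varepsilon$-loss at integer exponents. The only cosmetic difference is that you organize the bootstrap as a general iteration with a fixed-point analysis, whereas the paper exploits $2s>1$ to finish in two or three explicit steps; also note that when $1+2s/\beta\in\mathbb{N}$ the $\varepsilon$-loss originates already in the composition step (e.g.\ $t|t|$ is not $C^{2}$), not only in the non-integer hypothesis of Theorem \ref{t:Holder}, but this does not affect the conclusion.
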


\vspace{5pt}

\begin{proof}
We set $f(t)=ct|t|^{\frac{2s}{\beta}}\in C^{1+\frac{2s}{\beta}}[0,\infty)$. By the Sobolev embedding \eqref{eq:Sobolev}, $w\in H^{s}(\mathbb{T})\subset C^{s-1/2}(\mathbb{T})$. Since $w$ and $f$ are H\"older continuous, $g=f(w)\in C^{\nu_0}(\mathbb{T})$ for some $\nu_0\in (0,1)$. By Theorem \ref{t:Holder}, $w\in C^{2s+\nu_0}(\mathbb{T})$. In particular, $w\in C^{1+\nu_0}(\mathbb{T})$. 

For $\beta<-2s$, $f(t)$ is H\"older continuous of exponent $0<1+2s/\beta<1$ and hence $g=f(w)\in C^{1+\frac{2s}{\beta}}(\mathbb{T})$. By Theorem \ref{t:Holder}, $w\in C^{2s+1+\frac{2s}{\beta}}(\mathbb{T})$ if $2s+1+\frac{2s}{\beta}\notin \mathbb{N}$. Otherwise, $w\in C^{2s+1+\frac{2s}{\beta}-\varepsilon}(\mathbb{T})$ for $\varepsilon>0$.

For $0<\beta$, $f(t)$ is a $C^{1}$-function. For  $0<2s/\beta<1$, $g=f(w)\in C^{1+\nu_1}(\mathbb{T})$ for some $\nu_1\in (0,1)$. By differentiating the second equation of \eqref{eq:nonlinear2}, $\mathcal{L}(s,\beta)\partial_{\theta}w=\partial_{\theta}g$. By Theorem \ref{t:Holder}, $w\in C^{1+2s+v_1}(\mathbb{T})$. In particular, $w\in C^{2+v_1}(\mathbb{T})$. By $g=f(w)\in C^{1+\frac{2s}{\beta}}(\mathbb{T})$. By Theorem \ref{t:Holder}, we obtain $w\in C^{2s+1+\frac{2s}{\beta}}(\mathbb{T})$ if $2s+1+\frac{2s}{\beta}\notin \mathbb{N}$. Otherwise, $w\in C^{2s+1+\frac{2s}{\beta}-\varepsilon}(\mathbb{T})$ for $\varepsilon>0$.

For $2s/\beta=1$, we obtain $g=f(w)\in C^{1+\frac{2s}{\beta}-\varepsilon}(\mathbb{T})$ and $w\in C^{2s+1+\frac{2s}{\beta}-\varepsilon}(\mathbb{T})$ for arbitrary small $\varepsilon>0$. The case $1\leq 2s/\beta$ is similar.
\end{proof}

\vspace{5pt}
We obtain the same regularity for $s=1/2$ by using the $L^{p}$-estimate \eqref{eq:LP}.
\vspace{5pt}

\begin{lem}[$s=1/2$]\label{l:s=1/2}
Let $(m_0,s,\beta)$ satisfy \eqref{eq:AD}. Assume that $w\in X$ is a critical point of $I$. Then,\\

\noindent
(i) For $\frac{1}{\beta}\notin \mathbb{N}$, $w \in C^{2+\frac{1}{\beta}}(\mathbb{T})$ and $g=cw |w|^{\frac{1}{\beta}}\in C^{1+\frac{1}{\beta}}(\mathbb{T})$. \\
(ii) For $\frac{1}{\beta}\in \mathbb{N}$, $w \in C^{2+\frac{1}{\beta}-\varepsilon}(\mathbb{T})$ and $g=cw |w|^{\frac{1}{\beta}}\in C^{1+\frac{1}{\beta}-\varepsilon}(\mathbb{T})$ for arbitrary small $\varepsilon>0$.
\end{lem}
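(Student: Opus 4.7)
The plan is to follow the iteration scheme of Lemma \ref{l:1/2<s}, but to precede it by an $L^p$-bootstrap step that compensates for the failure of the critical embedding $H^{1/2}(\mathbb{T}) \subset L^\infty(\mathbb{T})$.

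First, I would use the Sobolev embedding \eqref{eq:Sobolev} for $s=1/2$ to note that $w \in H^{1/2}_{m_0,\mathrm{odd}}(\mathbb{T}) \subset L^q(\mathbb{T})$ for every $q < \infty$. Since $1 + 1/\beta > 0$ in both ranges of $\beta$ under consideration (that is, $-m_0-1 < \beta < -1$ or $0 < \beta < m_0$), it follows that $g = cw|w|^{1/\beta} \in L^p(\mathbb{T})$ for every $p < \infty$, because $|g| = c|w|^{1+1/\beta}$. Applying Theorem \ref{t:LP} with $2s=1$ then yields $w \in H^{1,p}_{m_0}(\mathbb{T})$ for every $p < \infty$, and the standard embedding $W^{1,p}(\mathbb{T}) \hookrightarrow C^{1-1/p}(\mathbb{T})$ gives $w \in L^\infty(\mathbb{T}) \cap C^\nu(\mathbb{T})$ for every $\nu \in (0,1)$.

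From here, I would replay the argument of Lemma \ref{l:1/2<s} with $2s=1$. For $\beta < -1$, $f(t) = ct|t|^{1/\beta}$ is H\"older of exponent $1 + 1/\beta \in (0,1)$ on bounded sets, so combined with $w \in C^\nu$ for $\nu$ close to $1$, composition gives $g = f(w) \in C^{(1+1/\beta)\nu}$; Theorem \ref{t:Holder} upgrades $w$ to $C^{1 + (1+1/\beta)\nu}$, which becomes Lipschitz by choosing $\nu$ close enough to $1$. A second pass then produces $g = f(w) \in C^{1+1/\beta}$, and since $2 + 1/\beta \in (1,2)$ is never an integer in this range, Theorem \ref{t:Holder} gives $w \in C^{2+1/\beta}(\mathbb{T})$. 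For $0 < \beta$, $f$ is $C^1$ on bounded sets, so $g = f(w) \in C^\nu$ and $w \in C^{1+\nu}$ for every $\nu < 1$ by Theorem \ref{t:Holder}. Differentiating $-\mathcal{L}(1/2,\beta)w = g$ and applying Theorem \ref{t:Holder} to $\partial_\theta w$ (as in the $0<\beta$ case of Lemma \ref{l:1/2<s}) then improves this to $w \in C^{2+\nu_1}(\mathbb{T})$ for some $\nu_1 > 0$; at that point $g = f(w) \in C^{1 + 1/\beta}$, and Theorem \ref{t:Holder} yields $w \in C^{2+1/\beta}(\mathbb{T})$ when $1/\beta \notin \mathbb{N}$, and $w \in C^{2+1/\beta - \varepsilon}(\mathbb{T})$ otherwise for arbitrary small $\varepsilon > 0$. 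The asserted regularity of $g$ then follows immediately from $g = cw|w|^{1/\beta}$ and the regularity of $w$.

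The main obstacle is precisely the loss of the H\"older embedding at the critical exponent $s=1/2$, which is what the $L^p$-bootstrap via Theorem \ref{t:LP} is designed to circumvent. The success of that step relies on the structural positivity $1 + 1/\beta > 0$, which ensures that the nonlinearity $w|w|^{1/\beta}$ preserves integrability at every level $L^p$. Once $w$ is placed in $L^\infty \cap C^\nu$, the remainder of the argument parallels the $1/2 < s < 1$ case of Lemma \ref{l:1/2<s}.
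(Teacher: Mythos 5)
Your proposal is correct and follows essentially the same route as the paper: the Sobolev embedding $H^{1/2}(\mathbb{T})\subset L^{q}(\mathbb{T})$ for all $q<\infty$ places $g$ in every $L^{p}$, Theorem \ref{t:LP} then gives $w\in H^{1,p}(\mathbb{T})$ and hence $w\in C^{\nu}(\mathbb{T})$ for all $\nu<1$, after which the iteration of Lemma \ref{l:1/2<s} (H\"older composition plus Theorem \ref{t:Holder}, differentiating the equation in the case $\beta>0$) yields the stated regularity. The only cosmetic difference is that you insert an extra intermediate H\"older bootstrap in the case $\beta<-1$ where the paper passes directly from $w\in C^{1+\nu_3}(\mathbb{T})$ to $g\in C^{1+1/\beta}(\mathbb{T})$; both are valid.
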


\vspace{5pt}

\begin{proof}
By the Sobolev embedding \eqref{eq:Sobolev}, $w\in H^{\frac{1}{2}}(\mathbb{T})\subset L^{q}(\mathbb{T})$ for all $1\leq q<\infty$. Thus, $g\in L^{p}(\mathbb{T})$ for all $1< p<\infty$. By Theorem \ref{t:LP}, $w\in H^{1,p}(\mathbb{T})$. By the Sobolev embedding \eqref{eq:Sobolev}, $w\in C^{\nu_{2}}(\mathbb{T})$ for all $\nu_2\in (0,1)$. Since $w$ and $f$ are H\"older continuous, $g=f(w)\in C^{\nu_3}(\mathbb{T})$ for some $\nu_3\in (0,1)$. By Theorem \ref{t:Holder}, $w\in C^{1+\nu_{3}}(\mathbb{T})$. 

For $\beta<-1$, $g=f(w)\in C^{1+\frac{1}{\beta}}(\mathbb{T})$. By Theorem \ref{t:Holder}, $w\in C^{2+\frac{1}{\beta}}(\mathbb{T})$. For $0<\beta$, $f(t)$ is a $C^{1+\frac{1}{\beta}}$-function. By applying the same argument as the proof of Lemma \ref{l:1/2<s}, we obtain the desired regularity results. 
\end{proof}

\vspace{5pt}

For $0<s<1/2$, we first show H\"older continuity of critical points by an iteration argument using the $L^{p}$-estimate \eqref{eq:LP}.

\vspace{5pt}

\begin{prop}[$0<s<1/2$] \label{p:LPtoHolder}
Let $(m_0,s,\beta)$ satisfy \eqref{eq:AD}. Assume that $w\in X$ is a critical point of $I$. Then, $w\in C^{\gamma}(\mathbb{T})$ for some $\gamma\in (0,1)$.
\end{prop}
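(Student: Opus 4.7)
The plan is a bootstrap in $L^p$ regularity, alternating between the linear estimate of Theorem~\ref{t:LP} and the Sobolev embedding on the torus. By Lemma~\ref{l:CPisSol}, the critical point $w$ satisfies $-\mathcal{L}(s,\beta)w = \tilde g$ in the weak sense, where $\tilde g$ is the projection of $g := cw|w|^{2s/\beta}$ onto Fourier modes $|m|\ge m_0$. Since this projection is bounded on $L^p(\mathbb{T})$ for every $1<p<\infty$, any $L^p$ bound on $g$ transfers to $\tilde g$, allowing direct application of Theorem~\ref{t:LP}.

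Starting from $w\in X\subset H^s(\mathbb{T})$, the Sobolev embedding \eqref{eq:Sobolev} places $w\in L^{q_0}(\mathbb{T})$ with $1/q_0 = 1/2 - s$. Since $|g|=c|w|^{1+2s/\beta}$ and $1+2s/\beta>0$ in both admissible ranges in \eqref{eq:AD}, we obtain $g\in L^{p_1}(\mathbb{T})$ with $p_1 = q_0/(1+2s/\beta)$. Theorem~\ref{t:LP} then gives $w\in H^{2s,p_1}(\mathbb{T})$. If $2sp_1>1$, the embedding $H^{2s,p_1}(\mathbb{T})\hookrightarrow C^{2s-1/p_1}(\mathbb{T})$ finishes the proof; otherwise $w\in L^{q_1}(\mathbb{T})$ with $1/q_1 = 1/p_1 - 2s>0$, and we iterate.

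Setting $a_k = 1/q_k$, the resulting recursion $a_{k+1} = \lambda a_k - 2s$ with $\lambda := 1+2s/\beta$ is linear with fixed point $a^* = \beta$. For $-m_0-2s<\beta<-2s$ one has $\lambda\in(0,1)$ and $a^* = \beta<0$, so $a_k$ decreases geometrically toward $\beta$. For $1/2-s<\beta<m_0$, $\lambda>1$, but the hypothesis translates to $a_0 = (1-2s)/2 < \beta = a^*$, hence $a_k - a^* = \lambda^k(a_0-a^*)\to -\infty$. In either case, $a_k$ drops below the threshold making $p_{k+1}>1/(2s)$ in finitely many iterations, at which point the Sobolev embedding yields $w\in C^\gamma(\mathbb{T})$ for some $\gamma\in(0,1)$.

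The main obstacle is ensuring the iteration actually gains integrability at each step. For positive $\beta$ the critical inequality is $\beta > 1/2-s$: without it one would have $a_0>a^*$ and the iteration would drive $a_k \to +\infty$, i.e.\ progressively worse $L^p$ integrability. This threshold is precisely what also guarantees the compact embedding \eqref{eq:CE} used in the variational setup, so its appearance here is consistent with the sharp lower bound on $\beta$ in case (C) of Theorem~\ref{t:thm}.
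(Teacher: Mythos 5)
Your proof is correct and takes essentially the same route as the paper: a bootstrap alternating Theorem~\ref{t:LP} with the Sobolev embedding, reduced to an affine recursion for the integrability exponent whose fixed point ($\beta$ in your normalization of $1/q_k$; $\beta+2s$ in the paper's, which tracks $1/p_n$) lies below the starting value $1/2-s$ exactly when $\beta<-2s$ or $\beta>1/2-s$. Your explicit observation that the right-hand side must be replaced by its projection onto modes $|m|\ge m_0$, which is bounded on $L^p(\mathbb{T})$ for $1<p<\infty$, is a small point the paper leaves implicit.
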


\vspace{5pt}

\begin{proof}
By the Sobolev embedding $H^{s}(\mathbb{T})\subset L^{q_1}(\mathbb{T})$ for $1/q_1=1/2-s$, $g(w)=cw|w|^{\frac{2s}{\beta}}\in L^{p_1}(\mathbb{T})$ for $p_1=q_1/a$ and $a=1+2s/\beta>0$. For $\beta<-2s$, $0<a<1$ and $p_1>q_1>2$. For $1/2-s< \beta$, $a<1+2s/(1/2-s)$ and $p_1=q_1/a>2/(1+2s)>1$. By Theorem \ref{t:LP}, $w\in H^{2s,p_1}(\mathbb{T})$. If $1/p_1-2s<0$, $w$ is H\"older continuous by the Sobolev embedding. We may assume that $1/p_1-2s>0$.   

By the Sobolev embedding $H^{2s,p_1}(\mathbb{T})\subset L^{q_2}(\mathbb{T})$ for $1/q_2=1/p_1-2s$. By the same argument as above, $g(w)\in L^{p_2}(\mathbb{T})$ for $p_2=q_2/a$ and $w\in H^{2s,p_2}(\mathbb{T})$. If $1/p_2-2s<0$, $w$ is H\"older continuous by the Sobolev embedding.

By repeating this argument, we obtain a sequence $\{p_n\}\subset (1,\infty)$ such that 

\begin{align*}
w\in H^{2s,p_n}(\mathbb{T}),\quad \frac{1}{p_n}-2s>0,\quad \frac{1}{p_{n+1}}=a\left(\frac{1}{p_{n}}-2s \right), \quad n=1,2,\cdots.
\end{align*}\\
By solving the geometric sequence,

\begin{align*}
\frac{1}{p_{n}}=a^{n-1}\left(\frac{1}{p_{1}}-\alpha\right)+\alpha,\quad \alpha=\beta+2s.
\end{align*}\\
For $\beta<-2s$, $a<1$ and $\alpha<0$. Thus, $1/p_N<2s$ for some $N\geq 1$ and $w$ is H\"older continuous by the Sobolev embedding.

For $1/2-s< \beta$, $a=1+2s/\beta>1$ and 

\begin{align*}
\frac{1}{p_1}-\alpha=\frac{a}{q_1}-\frac{2as}{a-1}
=a\left(\frac{1}{q_1}- \frac{2s}{a-1}\right)=a\left(\frac{1}{2}-s- \beta\right)<0.
\end{align*}\\
Thus, $1/p_N<2s$ for some $N\geq 1$ and $w$ is H\"older continuous by the Sobolev embedding. 
\end{proof}

\vspace{5pt}

We next apply an iteration argument using the H\"older estimate \eqref{eq:Holder} and obtain the regularity $w\in C^{-\beta-\varepsilon}(\mathbb{T})$. If $\beta<-1$, $w$ is a $C^{1}$-function and we obtain the regularity $w\in C^{2s+1+\frac{2s}{\beta}}(\mathbb{T})$. If $-1\leq \beta<-2s$, $w$ merely lies in $C^{-\beta-\varepsilon}(\mathbb{T})$ with $g=cw|w|^{\frac{2s}{\beta}}\in C^{-\beta-2s-\varepsilon}(\mathbb{T})$.

\vspace{5pt}

\begin{lem}[$0<s<1/2$]\label{l:s<1/2} 
Let $(m_0,s,\beta)$ satisfy \eqref{eq:AD}. Assume that $w\in X$ is a critical point of $I$. Then, \\

\noindent
(I) For $\beta<-1$ and $1/2-s< \beta$,\\
(i) If $2s+1+\frac{2s}{\beta}\notin \mathbb{N}$ and $1+\frac{2s}{\beta}\notin \mathbb{N}$, $w \in C^{2s+1+\frac{2s}{\beta}}(\mathbb{T})$ and $g=cw |w|^{\frac{2s}{\beta}}\in C^{1+\frac{2s}{\beta}}(\mathbb{T})$.\\
(ii) Otherwise, $w \in C^{2s+1+\frac{2s}{\beta}-\varepsilon}(\mathbb{T})$ and $g=cw |w|^{\frac{2s}{\beta}}\in C^{1+\frac{2s}{\beta}-\varepsilon}(\mathbb{T})$ for arbitrary small $\varepsilon>0$.

\noindent
(II) For $-1\leq  \beta<-2s$, $w \in C^{-\beta-\varepsilon}(\mathbb{T})$ and $g=cw  |w|^{\frac{2s}{\beta}}\in C^{-\beta-2s-\varepsilon}(\mathbb{T})$ for arbitrary small $\varepsilon>0$.
\end{lem}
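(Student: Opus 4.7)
The strategy mirrors Lemma \ref{l:1/2<s}, with Proposition \ref{p:LPtoHolder} furnishing the initial H\"older foothold that Sobolev embedding no longer provides when $0<s<1/2$. The plan is to set $\alpha = 1 + 2s/\beta$ and $f(t) = ct|t|^{2s/\beta}$, so that $g = f(w)$, and then bootstrap via Theorem \ref{t:Holder}. In the negative range $-m_0-2s < \beta < -2s$ I will exploit that $\alpha \in (0,1)$ and $f \in C^\alpha(\mathbb{R})$; in the positive range $1/2-s < \beta < m_0$, $f \in C^1(\mathbb{R})$ with $f' \in C^{\min(1, 2s/\beta)}$ near the origin.

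In the negative range the bootstrap proceeds as follows. Proposition \ref{p:LPtoHolder} supplies $w \in C^{\gamma_0}(\mathbb{T})$ for some $\gamma_0 \in (0,1)$. The elementary composition estimate
\begin{align*}
|f(w(\theta))-f(w(\theta'))| \leq [f]_{C^\alpha}\,[w]_{C^{\gamma}}^{\alpha}\,|\theta-\theta'|^{\alpha \gamma}
\end{align*}
gives $g \in C^{\alpha\gamma}(\mathbb{T})$, and Theorem \ref{t:Holder} promotes $w$ to $C^{2s+\alpha\gamma}(\mathbb{T})$. The affine map $T(\gamma)=2s + \alpha\gamma$ is a strict contraction with fixed point $\gamma^* = 2s/(1-\alpha) = -\beta$, so iteration yields $w \in C^\gamma(\mathbb{T})$ for every $\gamma < -\beta$.

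For case (I) with $\beta < -1$, one has $-\beta > 1$, so a finite number of iterations promotes $w$ to $C^1(\mathbb{T})$. Using the Lipschitz estimate for $w$, the composition bound sharpens to $|g(\theta)-g(\theta')| \leq [f]_{C^\alpha}\|\partial_\theta w\|_\infty^\alpha|\theta-\theta'|^\alpha$, so $g \in C^\alpha = C^{1+2s/\beta}(\mathbb{T})$, and a final application of Theorem \ref{t:Holder} yields $w \in C^{2s+1+2s/\beta}(\mathbb{T})$, with the standard $\varepsilon$ loss when the exponent lies in $\mathbb{N}$. For case (II), $-1 \leq \beta < -2s$, the fixed point $\gamma^* = -\beta$ is at most $1$, so the iteration saturates below $C^1$; one concludes $w \in C^{-\beta-\varepsilon'}(\mathbb{T})$ for every $\varepsilon' > 0$, and the composition with the H\"older $f$ then gives $g \in C^{\alpha(-\beta-\varepsilon')}(\mathbb{T}) = C^{-\beta-2s-O(\varepsilon')}(\mathbb{T})$, i.e., the claimed $C^{-\beta-2s-\varepsilon}$ regularity. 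The positive subcase of (I) is handled by the bootstrap of Lemma \ref{l:1/2<s} verbatim: the H\"older starting regularity from Proposition \ref{p:LPtoHolder} replaces the Sobolev embedding step, while the remaining arguments use only $f \in C^1$ together with the differentiated equation $-\mathcal{L}(s,\beta)\partial_\theta w = \partial_\theta g$.

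The main obstacle is the sharp qualitative transition at $\beta = -1$: the cap $\gamma^* = -\beta$ crosses the threshold of $C^1$ regularity exactly at that value. What permits the higher regularity in part (I) is the ability to promote $w$ to $C^1$ and thereby bypass the contraction factor $\alpha$ in the composition bound; for $-1 \leq \beta < -2s$ no such promotion is available, and the regularity of both $w$ and $g$ is irreducibly limited below $C^1$. Verifying sharply that the iteration indeed saturates at $-\beta$ (rather than improving by some more refined composition trick) is the most delicate point.
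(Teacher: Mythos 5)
Your proposal follows the paper's own argument essentially verbatim: the same bootstrap $\gamma_{n+1}=a\gamma_n+2s$ (which the paper solves as a geometric sequence rather than phrasing as a contraction, but with the identical fixed point $-\beta$), the same dichotomy at $\beta=-1$ between reaching $C^1$ and saturating at $C^{-\beta-\varepsilon}$, and the same reduction of the positive range to the bootstrap of Lemma \ref{l:1/2<s} once Proposition \ref{p:LPtoHolder} supplies the initial H\"older regularity. The proof is correct and no further comparison is needed.
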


\vspace{5pt}

\begin{proof}
We first consider negative $\beta<-2s$. Observe that $f(t)\in C^{a}[0,\infty)$ for $0<a=1+2s/\beta<1$. By Proposition \ref{p:LPtoHolder}, $w\in C^{\gamma_1}(\mathbb{T})$ for some $\gamma_1\in (0,1)$ and $g(w)\in C^{a\gamma_1}(\mathbb{T})$. By Theorem \ref{t:Holder}, $w\in C^{2s+a\gamma_1}(\mathbb{T})$. If $\gamma_2=2s+a\gamma_1>1$, $w$ is a $C^{1}$-function. We may assume that $\gamma_2=2s+a\gamma_1<1$. By repeating this argument, we obtain a sequence $\{\gamma_n\}$ such that 

\begin{align*}
w\in C^{\gamma_n}(\mathbb{T}),\quad \gamma_{n+1}=a\gamma_n+2s,\quad n=1,2,\cdots.
\end{align*}\\
By solving the geometric sequence,

\begin{align*}
\gamma_n=a^{n-1}(\gamma_1+\beta)-\beta,
\end{align*}\\
and $\gamma_n$ approaches $-\beta$. If $\beta<-1$, $\gamma_N>1$ for some $N\geq 1$ and $w$ is a $C^{1}$-function. Then, $g(w)\in C^{a}(\mathbb{T})$ and $w\in C^{2s+a}(\mathbb{T})$ if $2s+a\notin \mathbb{N}$ and $w\in C^{2s+a-\varepsilon}(\mathbb{T})$ for $\varepsilon>0$ if $2s+a\in \mathbb{N}$ by Theorem \ref{t:Holder}. If $-1\leq \beta<-2s$, $w\in C^{-\beta-\varepsilon}(\mathbb{T})$ and $g(w)\in C^{-\beta-2s-\varepsilon}(\mathbb{T})$ for arbitrary small $\varepsilon>0$.

In the case $1/2-s<\beta$, $f(t)\in C^{1}[0,\infty)$ and $w\in C^{\gamma_1}(\mathbb{T})$ implies that $g(w)\in C^{\gamma_1}(\mathbb{T})$. By Theorem \ref{t:Holder}, $w\in C^{2s+\gamma_1}(\mathbb{T})$. By repeating this argument, $w\in C^{1+\nu_0}(\mathbb{T})$ for some $\nu_0\in (0,1)$. Since $f(t)\in C^{1+\frac{2s}{\beta}}[0,\infty)$, the desired regularity follows from the same argument as in the proof of Lemma \ref{l:1/2<s}. 
\end{proof}

\vspace{5pt}
\subsection{Variational principles}

In the rest of this paper, we find critical points of the functional \eqref{eq:F} by variational principles. For a functional $I\in C^{1}(X;\mathbb{R})$ on a Banach space $X$, we say that $w\in X$ is a critical point of $I$ if 

\begin{align*}
\langle I'[w],\eta\rangle=0\quad \textrm{for all}\ \eta\in X.
\end{align*}\\
The constant $c\in \mathbb{R}$ is a critical value if a critical point $w\in X$ exists at $c=I[w]$. We say that a sequence $\{w_n\}\subset X$ is a Palais--Smale sequence at level $c\in \mathbb{R}$ if 

\begin{align*}
I[w_n]&\to c,\\ 
I'[w_n]&\to 0\quad \textrm{in}\ X^{*}.
\end{align*}\\
We say that $I$ satisfies the $(\textrm{PS})_c$ condition if any Palais--Smale sequence at level $c\in \mathbb{R}$ has a convergent subsequence in $X$. We apply the following three variational principles for the functional $I\in C^{1}(X; \mathbb{R})$ \cite[Corollary 2.5, Theorems 2.10, 2.11, 2.12]{Willem}:\\

\begin{lem}[Minimizing method]\label{l:Min}
Assume that $I$ is bounded from below and satisfies the $(\textrm{PS})_c$ condition with $c=\inf_{X}I$. Then, there exists a minimizer for $I$.
\end{lem}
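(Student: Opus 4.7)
The plan is to produce a minimizer as the limit of an Ekeland-improved minimizing sequence, which under the Palais--Smale hypothesis must converge. Here $I$ is assumed bounded from below so $c=\inf_{X}I\in\mathbb{R}$; the task is to upgrade an arbitrary minimizing sequence, which a priori only satisfies $I[w_n]\to c$, to a genuine Palais--Smale sequence at the level $c$ so that the $(\textrm{PS})_c$ hypothesis can be invoked.

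First I would pick any sequence $\{w_n\}\subset X$ with $I[w_n]\to c$ and $I[w_n]\le c+1/n$. By Ekeland's variational principle applied to the complete metric space $X$ with the $C^1$ functional $I$ bounded from below (both hypotheses are available), for each $n$ there exists $v_n\in X$ satisfying
\begin{align*}
I[v_n]\le I[w_n],\qquad \|v_n-w_n\|_X\le \tfrac{1}{\sqrt{n}},\qquad \|I'[v_n]\|_{X^{*}}\le \tfrac{1}{\sqrt{n}}.
\end{align*}
In particular $I[v_n]\to c$ (since $c\le I[v_n]\le I[w_n]\to c$) and $I'[v_n]\to 0$ in $X^{*}$, so $\{v_n\}$ is a Palais--Smale sequence at level $c$.

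Next I would invoke the hypothesis: by the $(\textrm{PS})_c$ condition, $\{v_n\}$ admits a subsequence $\{v_{n_k}\}$ converging strongly in $X$ to some $w\in X$. Since $I\in C^{1}(X;\mathbb{R})$ (in particular continuous), $I[w]=\lim_k I[v_{n_k}]=c$, so $w$ attains the infimum and is the desired minimizer. (As a byproduct $I'[w]=0$, but this is not needed for the statement.)

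The only non-routine point is the application of Ekeland's variational principle to pass from a pure minimizing sequence to a Palais--Smale sequence; this is a standard black-box fact, and apart from invoking it the argument is a one-line continuity and subsequence extraction. No further obstacle is expected beyond correctly verifying the hypotheses (completeness of the Hilbert space $X=H^{s}_{m_0,\textrm{odd}}(\mathbb{T})$, lower boundedness of $I$, and the $C^{1}$-regularity already established in Proposition \ref{p:FD}).
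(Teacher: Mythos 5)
Your proof is correct. Note, however, that the paper does not prove this lemma at all: it is quoted verbatim as a standard variational principle with the citation \cite[Corollary 2.5]{Willem}, so there is no in-paper argument to compare against. Your Ekeland-based derivation is the classical proof of that cited fact: Ekeland's variational principle upgrades an arbitrary minimizing sequence to a Palais--Smale sequence at the level $c=\inf_X I$ (this is where completeness of $X=H^{s}_{m_0,\textrm{odd}}(\mathbb{T})$ and lower semicontinuity of $I$, both available here since $X$ is a closed subspace of a Hilbert space and $I\in C^{1}(X;\mathbb{R})$ by Proposition \ref{p:FD}, are used), and the $(\textrm{PS})_c$ hypothesis then yields a strongly convergent subsequence whose limit attains the infimum by continuity. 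Willem's own treatment reaches the same conclusion via a quantitative deformation lemma rather than Ekeland, but the two routes are interchangeable here and equally standard; your version is self-contained modulo the single black-box invocation of Ekeland, which is appropriate for a lemma the authors themselves treat as quoted background.
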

 
\begin{lem}[Mountain pass theorem]\label{l:MP}
Assume that there exist $w_0\in X$ and $r_0>0$ such that $||w_0||_{X}>r_0$ and 

\begin{align*}
\inf_{||w||_{X}=r_0}I[w]>I[0]\geq I[w_0]. 
\end{align*}\\
Assume that $I$ satisfies the $(\textrm{PS})_c$ condition with 

\begin{align*}
c&=\inf_{\gamma\in \Lambda}\max_{0\leq t\leq 1}I[\gamma(t)], \\
\Lambda&=\left\{\gamma\in C([0,1]; X )\ \middle|\ \gamma(0)=0,\ \gamma(1)=w_0\  \right\}.
\end{align*}\\
Then, $c$ is a critical value of $I$.
\end{lem}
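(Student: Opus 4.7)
The plan is to argue by contradiction via a deformation-lemma argument, the standard route for the Mountain Pass Theorem. The crucial preliminary point is that $c$ is finite and, indeed, $c \ge \inf_{\|w\|_{X}=r_0} I[w]$. This follows because every path $\gamma \in \Lambda$ starts at $0$ and ends at $w_0$ with $\|w_0\|_X > r_0$, so by continuity of $t \mapsto \|\gamma(t)\|_X$ there must exist $t_*\in (0,1)$ with $\|\gamma(t_*)\|_X = r_0$. Combining this with the hypothesis $\inf_{\|w\|_X=r_0}I[w] > I[0] \ge I[w_0]$ yields
\begin{equation*}
c \ge \inf_{\|w\|_X = r_0}I[w] > \max\bigl(I[0],\,I[w_0]\bigr).
\end{equation*}

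Next, assume for contradiction that $c$ is not a critical value of $I$. Combining this with the $(\mathrm{PS})_c$ condition, a standard argument shows that there exist $\varepsilon_0 > 0$ and $\delta > 0$ so that
\begin{equation*}
\|I'[w]\|_{X^*} \ge \delta \quad \text{whenever} \quad |I[w] - c| \le 2\varepsilon_0.
\end{equation*}
Indeed, if no such $\varepsilon_0, \delta$ existed one could extract a sequence $\{w_n\}$ with $I[w_n] \to c$ and $I'[w_n]\to 0$, which by $(\mathrm{PS})_c$ would converge (along a subsequence) to a critical point at level $c$.

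The next step is to invoke the deformation lemma: using the pseudo-gradient vector field of $I$ on the region $\{|I - c| \le 2\varepsilon_0\}$ cut off smoothly, one constructs a continuous map $\eta: [0,1] \times X \to X$ satisfying
\begin{itemize}
\item $\eta(0,w) = w$ for every $w \in X$;
\item $\eta(t, w) = w$ whenever $|I[w] - c| \ge 2\varepsilon_0$;
\item $t \mapsto I[\eta(t, w)]$ is non-increasing;
\item $\eta(1, \cdot)$ maps the sublevel set $\{I \le c + \varepsilon\}$ into $\{I \le c - \varepsilon\}$ for some $0 < \varepsilon < \varepsilon_0$.
\end{itemize}
Fixing such $\varepsilon$ small enough that $c - \varepsilon > \max(I[0], I[w_0])$, we choose a path $\gamma \in \Lambda$ with $\max_{t\in [0,1]} I[\gamma(t)] < c + \varepsilon$; such a path exists by the definition of $c$. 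Because $I[0], I[w_0] < c - 2\varepsilon_0$ after shrinking $\varepsilon_0$ if necessary, the deformation fixes the endpoints: $\eta(1, 0) = 0$ and $\eta(1, w_0) = w_0$. Thus $\tilde{\gamma}(t) := \eta(1, \gamma(t))$ still lies in $\Lambda$. However, by the fourth bullet above, $\max_t I[\tilde\gamma(t)] \le c - \varepsilon < c$, contradicting the definition of $c$ as the infimum.

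The main technical obstacle is the construction of the pseudo-gradient vector field and the corresponding deformation $\eta$ on a general Banach space $X$ (not merely Hilbert), since the naive flow along $-I'$ is not well-defined in $X^*$. This is overcome via Palais' lemma, which provides a locally Lipschitz vector field $V: \{I' \ne 0\} \to X$ with $\|V(w)\|_X \le 2\|I'[w]\|_{X^*}^{-1}$ and $\langle I'[w], V(w)\rangle \ge 1$; the deformation $\eta$ is then obtained as the time-1 flow of a suitably cut-off $-V$. Once this is in place, the verification of the four properties above and the concluding contradiction are routine.
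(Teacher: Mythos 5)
The paper does not prove this lemma at all: it is quoted verbatim as a known variational principle, with a citation to Willem's \emph{Minimax Theorems} (Theorem 2.10 there). Your proposal is the standard and correct deformation-lemma proof of the Mountain Pass Theorem --- the intermediate-value argument showing $c\ge\inf_{\|w\|_X=r_0}I[w]$, the uniform lower bound on $\|I'\|_{X^*}$ near level $c$ extracted from the $(\mathrm{PS})_c$ condition, and the pseudo-gradient deformation fixing the endpoints --- which is essentially the argument in the cited reference, so there is nothing to compare against within the paper itself.
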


We apply linking and saddle point theorems when $X$ admits a direct sum decomposition $X=Y\oplus Z$ with a finite-dimensional subspace $Y$ and a subspace $Z$.

\begin{lem}[Linking theorem]\label{l:LT}
For $\rho_0>r_0>0$ and $z_0\in Z$ such that $||z_0||_{X}=r_0$, set 

\begin{align*}
M&=\left\{w=y+\lambda z_0\in Y\oplus \mathbb{R}z_0\ |\ ||w||_{X}\leq \rho_0,\ \lambda\geq 0,\ y\in Y\right\},\\
M_0&=\left\{w=y+\lambda z_0\in Y\oplus \mathbb{R}z_0\ |\ \lambda=0\ \textrm{and}\ ||y||_{X}\leq \rho_0,\ \textrm{or}\ \lambda> 0\ \textrm{and}\ ||w||_{X}= \rho_0 \right\},\\
N&=\left\{z\in Z\ |\ ||z||_{X}=r_0\  \right\}.
\end{align*}\\
Assume that there exist $\rho_0>r_0>0$ and $z_0\in Z$ satisfying $||z_0||_{X}=r_0$ such that 

\begin{align*}
\inf_{N}I>\max_{M_0}I.
\end{align*}\\
Assume that $I$ satisfies the $(\textrm{PS})_c$ condition with 

\begin{align*}
c&=\inf_{\gamma\in \Lambda}\max_{w\in M}I[\gamma[w]],\\
\Lambda&=\left\{\gamma\in C(M; X)\ \middle|\ \gamma|_{M_0}=id\  \right\}.
\end{align*}\\
Then, $c$ is a critical value of $I$.
\end{lem}

\begin{lem}[Saddle-point theorem]\label{l:SP}
Let $X=Y\oplus Z$ for a finite-dimensional subspace $Y$ and a subspace $Z$ of $X$. For $\rho_0>0$, set 

\begin{align*}
M&=\left\{w\in Y\ \middle|\ ||w||_{X}\leq \rho_0\ \right\},\\
M_0&=\left\{w\in Y\ \middle|\ ||w||_{X}= \rho_0\ \right\},\\
N&=Z.
\end{align*}\\
Assume that there exists $\rho_0>0$ such that 

\begin{align*}
\inf_{N}I>\max_{M_0}I.
\end{align*}\\
Assume that $I$ satisfies the $(\textrm{PS})_c$ condition with 

\begin{align*}
c&=\inf_{\gamma\in \Lambda}\max_{w\in M}I[\gamma[w]],\\
\Lambda&=\left\{\gamma\in C(M; X)\ \middle|\ \gamma|_{M_0}=id\  \right\}.
\end{align*}\\
Then, $c$ is a critical value of $I$.
\end{lem}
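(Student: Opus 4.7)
The plan is to prove this classical saddle-point theorem (due to Rabinowitz) by combining a topological \emph{linking} argument, which forces $c$ to be bounded below by $\inf_N I$, with a quantitative \emph{deformation lemma} extracted from the $(\mathrm{PS})_c$ condition, which would produce a contradiction if $c$ were not a critical value of $I$.

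First I would verify that $c$ is finite and satisfies $c \geq \inf_N I > \max_{M_0} I$. Finiteness from above follows by testing with $\gamma = \mathrm{id}_M$, which lies in $\Lambda$: since $M$ is compact and $I$ is continuous, $c \leq \max_M I < \infty$. For the lower bound I would establish the linking property: for every $\gamma \in \Lambda$ one has $\gamma(M) \cap N \neq \emptyset$. Let $P : X \to Y$ denote the continuous projection along the splitting $X = Y \oplus Z$. Since $\gamma|_{M_0} = \mathrm{id}$ and $M_0 \subset Y$, the map $P \circ \gamma : M \to Y$ restricts to $\mathrm{id}_{M_0}$ on $M_0$. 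Interpreting $M$ as a closed ball in the finite-dimensional space $Y$, the Brouwer degree is invariant under such boundary-fixing homotopies, so $\deg(P\gamma, \mathrm{int}(M), 0) = \deg(\mathrm{id}, \mathrm{int}(M), 0) = 1$. Hence there exists $w_\ast \in \mathrm{int}(M)$ with $P\gamma(w_\ast) = 0$, i.e., $\gamma(w_\ast) \in Z = N$, and therefore $\max_{w \in M} I[\gamma[w]] \geq I[\gamma(w_\ast)] \geq \inf_N I$.

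Next I would argue by contradiction. Assume $c$ is not a critical value of $I$. By the $(\mathrm{PS})_c$ condition, there exist $\delta, \alpha > 0$ such that $\|I'[w]\|_{X^\ast} \geq \alpha$ on the strip $\{w \in X : |I[w] - c| \leq 2\delta\}$; shrinking $\delta$ if necessary, I may also arrange $\max_{M_0} I < c - 2\delta$. The quantitative deformation lemma (in the spirit of \cite[Lemma 2.3]{Willem}, obtained by flowing along a pseudo-gradient of $I$) then produces a continuous map $\eta : X \to X$ such that $I[\eta(w)] \leq c - \delta$ whenever $I[w] \leq c + \delta$, and $\eta(w) = w$ whenever $|I[w] - c| \geq 2\delta$. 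Select $\gamma_0 \in \Lambda$ with $\max_{w \in M} I[\gamma_0[w]] \leq c + \delta$ and set $\tilde\gamma := \eta \circ \gamma_0$. The bound $\max_{M_0} I < c - 2\delta$ guarantees that $\eta$ is the identity on $\gamma_0(M_0) = M_0$, so $\tilde\gamma \in \Lambda$; but $\max_{w \in M} I[\tilde\gamma[w]] \leq c - \delta$, contradicting the definition of $c$.

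The principal obstacle is the linking step: the entire theorem rests on the fact that a continuous image of the ball $M$ which fixes its boundary must still meet $N$. This topological content is exactly what the hypothesis $X = Y \oplus Z$ with $Y$ finite-dimensional buys, via Brouwer degree; without finite-dimensionality, neither degree nor the intersection $\gamma(M) \cap N \neq \emptyset$ would be available. The remainder, pushing a near-minimax path below level $c$ using $(\mathrm{PS})_c$, is a routine minimax maneuver once the deformation lemma is granted.
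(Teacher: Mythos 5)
Your proof is correct and is precisely the classical linking-plus-deformation argument (Brouwer degree for the intersection $\gamma(M)\cap N\neq\emptyset$, then the quantitative deformation lemma under $(\mathrm{PS})_c$) behind Rabinowitz's saddle-point theorem. The paper does not prove this lemma at all but simply cites it as \cite[Theorem 2.12]{Willem}, so your argument supplies exactly the proof that the cited reference contains; all steps, including the degree-theoretic linking and the choice of $\delta$ with $\max_{M_0}I<c-2\delta$, check out.
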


\subsection{The direct sum decomposition}

For $(m_0,s,\beta)$ satisfying \eqref{eq:AD} and the functional $I\in C^{1}(X;\mathbb{R})$ for $X=H^{s}_{m_0,\textrm{odd}}(\mathbb{T})$ in \eqref{eq:F}, we show the $(\textrm{PS})_c$ condition for all $c\in \mathbb{R}$ and functional estimates on proper subsets. We then apply four different variational principles for the following $\beta$:\\

\noindent
(i) $-m_0<\beta<-2s$; minimizing method (Lemma \ref{l:Min})\\
(ii) $0<\beta< m_0+2-2s$ (We assume $1/2-s<\beta< m_0+2-2s$ for $0<s<1/2$); mountain pass theorem (Lemma \ref{l:MP})\\
(iii) $m_0+2-2s\leq \beta< m_0+2$; linking theorem (Lemma \ref{l:LT})\\
(iv) $-m_0-2s<\beta\leq - m_0$; saddle point theorem (Lemma \ref{l:SP})\\

Those divisions are due to signs of eigenvalues of $\mathcal{L}(s,\beta)$ and sub/supernonlinearity in \eqref{eq:SEP}. We first prepare bilinear form estimates under odd symmetry.\\

\begin{lem}\label{l:DSD}
Let $m_0\in \mathbb{N}$ and $0<s<1$. Let $\beta$ satisfy \eqref{eq: finitebm_0}. The function  $e_m=\sin m\theta/\sqrt{\pi}$ for $m\geq m_0$ is an eigenfunction of $\mathcal{L}(s,\beta)$ with the eigenvalue $\mu_{m}(s,\beta)$ and $\{e_m\}_{m=m_0}^{\infty}$ are orthonormal basis on $L^{2}_{m_0,\textrm{odd}}(\mathbb{T})$. Moreover, the bilinear form \eqref{eq:Bilinear} is expressed as 

\begin{align}
B(w,\eta)=\sum_{m=m_0}^{\infty}\mu_m(s,\beta)(w,e_m)_{L^{2}}(\eta,e_m)_{L^{2}},\quad w,\eta\in H^{s}_{m_0,\textrm{odd}}(\mathbb{T}).  \label{eq:BilinearSine}
\end{align}\\
(i) For $\beta$ satisfying \eqref{eq: positivebm_0}, 

\begin{align}
B(w,w)\geq \mu_{m_0}(s,\beta)||w||_{L^{2}}^{2},\quad w\in L^{2}_{m_0,\textrm{odd}}(\mathbb{T}). \label{eq:LB1}
\end{align}\\
(ii) For $\beta$ not satisfying \eqref{eq: positivebm_0}, 

\begin{equation}
\begin{aligned}
H^{s}_{m_0,\textrm{odd}}(\mathbb{T})&=Y\oplus Z,\quad 
Y=\textrm{span}\ (e_{m_0},e_{m_0+l}),\quad 
Z=H^{s}_{m_0+l+1,\textrm{odd}}(\mathbb{T}),\\
B(w,w)&=B(y,y)+B(z,z),\quad B(y,z)=0, \\
\mu_{m_0+l}(s,\beta)||y||_{L^{2}}^{2} &\leq B(y,y)\leq \mu_{m_0}(s,\beta)||y||_{L^{2}}^{2},\\
B(z,z)&\geq \mu_{m_0+l+1}(s,\beta)||z||_{L^{2}}^{2},\quad w=y+z\in Y\oplus Z,
\end{aligned}
\label{eq:LB2}
\end{equation}
for some $l\in \{0,1\}$. 
\end{lem}

\vspace{5pt}

\begin{proof}
Since $2\sqrt{\pi}i\hat{w}_{m}=(w,e_m)_{L^{2}}$ for $w\in L^{2}_{m_0,\textrm{odd}}(\mathbb{T})$, 

\begin{align*}
B(w,\eta)=2\pi \sum_{|m|\geq m_0}^{\infty}\mu_m(s,\beta)\hat{w}_m\overline{\hat{\eta}_m}
=\sum_{m=m_0}^{\infty}\mu_m(s,\beta)(w,e_m)_{L^{2}}(\eta,e_m)_{L^{2}},
\end{align*}\\
and \eqref{eq:BilinearSine} holds. We show (i). For $-m_0<\beta<m_0+2-2s$, $\mu_{m}(s,\beta)$ is positive and increasing by Proposition \ref{p: Symbolsign}. Thus,  

\begin{align*}
B(w,w)\geq \mu_{m_0} (s,\beta)\sum_{m= m_0}^{\infty}\left|(w,e_n)_{L^{2}}\right|^{2}=\mu_{m_0}(s,\beta) ||w||_{L^{2}}^{2}.
\end{align*}\\
We show (ii). For $-m_0-2s<\beta\leq -m_0$ or $m_0+2-2s\leq \beta< m_0+2$, the first two $\mu_{m}(s,\beta)$ can be non-positive by Proposition \ref{p: Symbolsign}. We set non-positive eigenvalues by $\mu_{m_0}(s,\beta)$ and $\mu_{m_0+l}(s,\beta)$ and the subspace by $Y=\textrm{span}\ (e_{m_0}, e_{m_0+l})$ for some $l\in\{0,1\}$. We apply the direct sum decomposition $L^{2}_{m_0,\textrm{odd}}(\mathbb{T})=Y\oplus Y^{\perp}$ and decompose $w \in H^{s}_{m_0,\textrm{odd}}(\mathbb{T})$ into $w=y+z\in Y\oplus Y^{\perp}$. Since $y\in H^{s}_{m_0,\textrm{odd}}(\mathbb{T})\oplus H^{s}_{m_0+l,\textrm{odd}}(\mathbb{T})$, $z=w-y\in H^{s}_{m_0+l+1,\textrm{odd}}(\mathbb{T})=Z$. Thus, the direct sum decomposition of \eqref{eq:LB2} holds. Observe that 

\begin{align*}
B(e_n,e_k)=\sum_{m=m_0}^{\infty}\mu_m(s,\beta) (e_n,e_m)_{L^{2}}(e_k,e_m)_{L^{2}}
=\mu_n(s,\beta) \delta_{n,k},\quad n,k\geq m_0.
\end{align*}\\
By using this identity, for $y\in Y$ and $z\in Z$,

\begin{align*}
B(y,z)&=B\left(\sum_{m=m_0}^{m_0+l}(y,e_{m})_{L^{2}}e_{m} , \sum_{k=m_0+l+1}^{\infty}(z,e_k)_{L^{2}}e_k\right)=\sum_{m=m_0}^{m_0+l}\sum_{k=m_0+l+1}^{\infty}(y,e_{m})_{L^{2}}(z,e_k)_{L^{2}} B\left(e_{m} , e_k\right)=0,\\
B(y,y)&=B\left(  \sum_{m=m_0}^{m_0+l}(y,e_{m})_{L^{2}}e_{m}  , \sum_{k=m_0}^{m_0+l}(y,e_{k})_{L^{2}}e_{k}  \right)
=\sum_{m=m_0}^{m_0+l}|(y,e_{m})_{L^{2}}|^{2}B(e_{m},e_{m})
\leq \mu_{m_0}(s,\beta) ||y||_{L^{2}}^{2},\\
B(y,y)&=\sum_{m=m_0}^{m_0+l}|(y,e_{m})_{L^{2}}|^{2}B(e_{m},e_{m})
\geq \mu_{m_0+l}(s,\beta) ||y||_{L^{2}}^{2},\\
B(z,z)&= \sum_{m=m_0+l+1}^{\infty}\mu_m(s,\beta) |(z,e_m)_{L^{2}}|^{2}
\geq \mu_{m_0+l+1}(s,\beta) ||z||_{L^{2}}^{2}.
\end{align*}\\
Thus, \eqref{eq:LB2} holds. 
\end{proof}

\vspace{5pt}

We show a lower bound for the bilinear form on the subspace associated with positive eigenvalues in $H^{s}_{m_0,\textrm{odd}}(\mathbb{T})$.

\vspace{5pt}

\begin{lem}\label{SLB}
Let $0<s<1$ and $m_0\in \mathbb{N}$. Let $\beta$ satisfy \eqref{eq: finitebm_0}.\\
\noindent
(i) For $\beta$ satisfying \eqref{eq: positivebm_0}, there exists $\delta_0>0$ such that

\begin{align}
B(w,w)\geq \delta_0 ||w||_{H^{s}}^{2},\quad w\in H^{s}_{m_0,\textrm{odd}}(\mathbb{T}).  \label{eq:SLB1}
\end{align}\\
(ii) For $\beta$ not satisfying \eqref{eq: positivebm_0}, there exists $\delta_1>0$ such that

\begin{align}
B(z,z)\geq \delta_1 ||z||_{H^{s}}^{2},\quad z\in Z.  \label{eq:SLB2}
\end{align}
\end{lem}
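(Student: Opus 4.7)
The whole statement reduces to a pointwise lower bound on the eigenvalues $\mu_m(s,\beta)$ on the relevant range of indices. Indeed, from the expansion \eqref{eq:BilinearSine} and Parseval's identity for the orthonormal basis $\{e_m\}_{m\geq m_0}$ of $L^2_{m_0,\mathrm{odd}}(\mathbb T)$, one has
\[
B(w,w)=\sum_{m=m_0}^{\infty}\mu_m(s,\beta)\,|(w,e_m)_{L^2}|^2,\qquad
\|w\|_{H^s}^2\simeq \sum_{m=m_0}^{\infty}\langle m\rangle^{2s}\,|(w,e_m)_{L^2}|^2,
\]
so both \eqref{eq:SLB1} and \eqref{eq:SLB2} will follow at once once we show $\mu_m(s,\beta)\geq c\langle m\rangle^{2s}$ uniformly for all $m$ belonging to the relevant subspace.

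\textbf{Key step: asymptotic regime.} The symbol estimate proved via Proposition \ref{p:P1} (with the choices of $a_k$ made in the asymptotic lemma for $\hat K_t$) gives the two-sided bound $\hat K_m(s,\beta)\simeq \langle m\rangle^{-2(1-s)}$ for $m\geq m_0$. In case (i) we have $|\beta|<m_0$, hence for $m\geq 2m_0$ (say) one gets $m^2-\beta^2\geq m^2/2$, so
\[
\mu_m(s,\beta)=2\pi\hat K_m(s,\beta)(m^2-\beta^2)\gtrsim \langle m\rangle^{-2(1-s)}\cdot m^2\gtrsim \langle m\rangle^{2s}.
\]
In case (ii) the restriction to $Z$ ensures $m\geq m_0+1$ (when $0<s\leq 1/2$) or $m\geq m_0+2$ (when $1/2<s<1$); once $m\geq 2|\beta|$, the same computation gives $\mu_m\gtrsim \langle m\rangle^{2s}$.

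\textbf{Finitely many low modes.} This leaves only a finite set of modes between the floor of the index set and the cut-off $\sim 2|\beta|$. On this finite set, $\mu_m>0$ by the eigenvalue ordering \eqref{eq:EVL} (positivity is exactly what was secured for case (i) on all $m\geq m_0$ and for case (ii) on $m\geq m_0+1$ or $m\geq m_0+2$ depending on $s$). Since $\langle m\rangle^{2s}$ is bounded on a finite set, taking the minimum of the ratios $\mu_m/\langle m\rangle^{2s}$ gives a positive constant; combining with the asymptotic regime yields a uniform $\mu_m\geq c\langle m\rangle^{2s}$ on the appropriate range, and then summing in $m$ completes the proof of both \eqref{eq:SLB1} and \eqref{eq:SLB2}.

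\textbf{Main obstacle.} The only delicate point is the avoidance of vanishing eigenvalues: for $-m_0-2s<\beta\leq -m_0$ one or two of the modes $e_{m_0},e_{m_0+1}$ give non-positive values of $\mu_m$, which is exactly why these are placed in $Y$ and removed in Lemma \ref{l:DSD}(ii)--(iii). Once this splitting is taken into account, the argument is uniform and the lower bound is quantitative via the symbol asymptotics.
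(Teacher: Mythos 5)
Your proof is correct, but it takes a genuinely different route from the paper. You prove the quantitative pointwise bound $\mu_m(s,\beta)\geq c\langle m\rangle^{2s}$ on the modes of the relevant subspace, splitting into an asymptotic regime (where the two-sided symbol bound $\hat K_m(s,\beta)\simeq \langle m\rangle^{-2(1-s)}$ from Proposition \ref{p:P1} combines with $m^2-\beta^2\gtrsim m^2$) and a finite set of low modes (where strict positivity of $\mu_m$ from \eqref{eq:EVL} suffices), and then sum. The paper instead argues by compactness: it sets $\delta_0=\inf\{B(w,w):\|w\|_{H^s}=1\}$, takes a minimizing sequence, extracts a weak $H^s$-limit $w$ (strong in $L^2$), and distinguishes the cases $w\neq 0$ (where $\delta_0\geq \mu_{m_0}\|w\|_{L^2}^2>0$) and $w=0$ (where the lower-order term controlled by \eqref{eq:bilinear2} vanishes and \eqref{eq:bilinear1} forces $\delta_0\geq C>0$). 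Your argument is more elementary and in principle yields an explicit constant $\delta_0$, at the cost of invoking the lower symbol bound $\hat K_m\gtrsim\langle m\rangle^{-2(1-s)}$ directly (which is legitimately available from Proposition \ref{p:P1}, and is in any case already implicit in \eqref{eq:bilinear1}); the paper's soft argument only uses the norm equivalences \eqref{eq:bilinear1}--\eqref{eq:bilinear2}, positivity of the eigenvalues, and the compact embedding $H^s(\mathbb{T})\subset\subset L^2(\mathbb{T})$. Both correctly identify that the whole point of the decomposition $X=Y\oplus Z$ in case (ii) is to excise the modes with non-positive $\mu_m$.
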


\vspace{5pt}

\begin{proof}
We give a proof for \eqref{eq:SLB1}. The proof for \eqref{eq:SLB2} is similar. By \eqref{eq:LB1}, 

\begin{align*}
\delta_0=\inf\{B(w,w)\ |\ w\in H^{s}_{m_0,\textrm{odd}}(\mathbb{T}),\ ||w||_{H^{s}}=1\ \}\geq 0.
\end{align*}\\
We show that $\delta_0$ is positive. We take a sequence $\{w_n\}\subset H^{s}_{m_0,\textrm{odd}}(\mathbb{T})$ such that $||w_n||_{H^{s}}=1$ and $B(w_n,w_n)\to \delta_0$. By the compact embedding $H^{s}(\mathbb{T})\subset \subset H^{r}(\mathbb{T})$ for $r<s$, we take a subsequence (still denoted by $\{w_n\}$) such that $w_n\rightharpoonup w$ in $H^{s}_{m_0,\textrm{odd}}(\mathbb{T})$ and $w_n\to w$ in $H^{r}(\mathbb{T})$ for some $w\in H^{s}_{m_0,\textrm{odd}}(\mathbb{T})$. For $N\geq 1$, 

\begin{align*}
B(w_n,w_n)
=\sum_{m=m_0}^{\infty}\mu_m(s,\beta) |(w_n,e_m)_{L^{2}}|^{2}
\geq \sum_{m=m_0}^{N}\mu_m(s,\beta) |(w_n,e_m)_{L^{2}}|^{2}.
\end{align*}\\
By letting $n\to\infty$ and $N\to\infty$ and by \eqref{eq:LB1}, 

\begin{align*}
\delta_0=\lim_{n\to\infty} B(w_n,w_n)\geq B(w,w)\geq \mu_{m_0}(s,\beta)||w||_{L^{2}}^{2}.
\end{align*}\\
If $w\neq 0$, $\delta_0$ is positive. We may assume that $w=0$. By \eqref{eq:bilinear2} and letting $n\to\infty$,

\begin{align*}
\sum_{m=m_0}^{\infty}\kappa_m(s,\beta)(1+|m|) |(w_n,e_m)_{L^{2}}|^{2}
\lesssim  ||w_n||_{H^{-\frac{1}{2}+s}}^{2} \to 0.
\end{align*}\\
By \eqref{eq:bilinear1} and $||w_n||_{H^{s}}=1$,  

\begin{align*}
\delta_0=\lim_{n\to\infty}B(w_n,w_n)\geq C\lim_{n\to\infty}||w_n||_{H^{s}}
= C>0.
\end{align*}\\ 
We thus conclude.
\end{proof}

\vspace{5pt}

\subsection{The Palais--Smale condition}

We show the (PS$)_{c}$ condition for the functional \eqref{eq:F} for all $c\in \mathbb{R}$. We apply the lower bounds \eqref{eq:SLB1} for $\beta$ satisyfing \eqref{eq: positivebm_0} and \eqref{eq:SLB2} for $\beta$ not satisfying \eqref{eq: positivebm_0}, respectively.

\vspace{5pt}

\begin{prop}\label{p:superlinearPSS}
Let $(m_0,s,\beta)$ satisfy \eqref{eq:AD}. The functional $I\in C^{1}(X; \mathbb{R})$ in \eqref{eq:F} satisfies 

\begin{align}
I[w]-\sigma \langle I'[w],w\rangle=\left(\frac{1}{2}-\sigma \right)B(w,w)+c\left(\sigma-\frac{\beta}{2(\beta+s)} \right)\int_{\mathbb{T}}|w|^{2+\frac{2s}{\beta}}d\theta,\quad w\in H^{s}_{m_0,\textrm{odd}}(\mathbb{T}), \quad \sigma\in \mathbb{R}.  \label{eq:FI}
\end{align} \\
Assume that $\beta$ satisfies \eqref{eq: positivebm_0}. Then,  

\begin{align}
I[w]-\sigma \langle I'[w],w \rangle\ \geq \left(\frac{1}{2}-\sigma \right)\delta_0||w||_{H^{s}}^{2},\quad w\in H^{s}_{m_0,\textrm{odd}}(\mathbb{T}),\quad \frac{\beta}{2(\beta+s)}<\sigma<\frac{1}{2}.  \label{eq:Superlinear}
\end{align}
\end{prop}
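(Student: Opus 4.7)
The identity \eqref{eq:FI} is a direct algebraic computation, and the inequality \eqref{eq:Superlinear} then follows by sign analysis combined with the coercivity bound \eqref{eq:SLB1}.

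For \eqref{eq:FI}, the plan is to test the formula for $I'[w]$ from Proposition \ref{p:FD} against $\eta=w$:
\begin{align*}
\langle I'[w],w\rangle = B(w,w) - c\int_{\mathbb{T}} |w|^{2+\frac{2s}{\beta}}d\theta.
\end{align*}
Subtracting $\sigma$ times this from the definition \eqref{eq:F} of $I[w]$ and collecting the two terms gives exactly \eqref{eq:FI}, with the coefficient $c(\sigma - \beta/(2(\beta+s)))$ in front of the integral arising from $-c\beta/(2(\beta+s))+c\sigma$. No deeper structure is needed here; the only thing to double-check is that the power $2+2s/\beta$ is handled consistently, for which we rely on the embedding \eqref{eq:CE} so that the integral is finite on $H^{s}_{m_0,\textrm{odd}}(\mathbb{T})$.

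For \eqref{eq:Superlinear}, the key observation is that under the hypothesis $\beta/(2(\beta+s)) < \sigma < 1/2$ both numerical coefficients $1/2-\sigma$ and $c(\sigma-\beta/(2(\beta+s)))$ are strictly positive (recall $c>0$). The integrand $|w|^{2+2s/\beta}$ is pointwise nonnegative whenever the exponent makes sense, so the second term on the right-hand side of \eqref{eq:FI} is nonnegative and can be discarded, yielding
\begin{align*}
I[w]-\sigma\langle I'[w],w\rangle \;\geq\; \Bigl(\tfrac{1}{2}-\sigma\Bigr) B(w,w).
\end{align*}
Since $-m_0<\beta<m_0$, Lemma \ref{SLB}(i), i.e.\ \eqref{eq:SLB1}, applies and gives $B(w,w)\geq \delta_0 \|w\|_{H^s}^2$ on $H^{s}_{m_0,\textrm{odd}}(\mathbb{T})\subset H^{s}_{m_0}(\mathbb{T})$, which immediately yields \eqref{eq:Superlinear}.

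There is no real obstacle in this proof; it is essentially a bookkeeping step that isolates the two regimes in which one can obtain a coercive lower bound on $I[w]-\sigma\langle I'[w],w\rangle$ modulo a positive remainder. The one subtle point worth flagging is that the admissible range of $\sigma$ is only non-empty in the superlinear regime $\beta>0$ (since for $-m_0<\beta<-2s$ one computes $\beta/(2(\beta+s))>1/2$); this is consistent with the fact that \eqref{eq:Superlinear} will be used later to verify the $(\mathrm{PS})_c$ condition for the Mountain Pass argument in the case $0<\beta<m_0$, while the cases $\beta<-2s$ and $\beta\leq -m_0$ will require the separate bounds already prepared in Lemma \ref{l:DSD}.
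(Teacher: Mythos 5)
Your proof is correct and is exactly the computation the paper intends: the paper's own proof consists of the one-line remark that \eqref{eq:FI} follows from \eqref{eq:F} and \eqref{eq:FD} (i.e.\ testing $I'[w]$ against $\eta=w$ and collecting coefficients) and that \eqref{eq:Superlinear} follows from \eqref{eq:FI} by positivity of both coefficients together with \eqref{eq:SLB1}. Your side observation that the admissible $\sigma$-range is non-empty only for $\beta>0$ is also accurate and consistent with how \eqref{eq:Superlinear} is actually invoked later.
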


\vspace{5pt}

\begin{proof}
The identity \eqref{eq:FI} follows from \eqref{eq:F} and \eqref{eq:FD}. The estimate \eqref{eq:Superlinear} follows from \eqref{eq:FI}.
\end{proof}

\vspace{5pt}

\begin{prop}\label{p:PSbounded}
Let $(m_0,s,\beta)$ satisfy \eqref{eq:AD}. Then any sequence $\{w_n\}\subset H^{s}_{m_0,\textrm{odd}}(\mathbb{T})$ satisfying 

\begin{equation}
\begin{aligned}
&\sup_{n\geq 1}I[w_n]<\infty,\\
&I'[w_n]\to 0\quad \textrm{on}\ H^{-s}_{m_0,\textrm{odd}}(\mathbb{T}),
\end{aligned}
\label{eq:PSS}
\end{equation}\\
are bounded on $H^{s}_{m_0,\textrm{odd}}(\mathbb{T})$.
\end{prop}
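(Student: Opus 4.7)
The plan is to argue by contradiction via a rescaling argument. Suppose $\|w_n\|_{H^s(\mathbb{T})} \to \infty$ and set $\tilde w_n := w_n/\|w_n\|_{H^s}$, so $\|\tilde w_n\|_{H^s} = 1$. By reflexivity of $H^s_{m_0,\textrm{odd}}(\mathbb{T})$, extract a subsequence (not relabelled) with $\tilde w_n \rightharpoonup \tilde w$; by the compact embedding \eqref{eq:CE} together with $H^s(\mathbb{T})\subset\subset L^2(\mathbb{T})$, one has $\tilde w_n \to \tilde w$ strongly in $L^p(\mathbb{T})$ with $p = 2 + 2s/\beta$ and in $L^2(\mathbb{T})$. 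The goal is to show $\tilde w_n \to 0$ in $H^s$, contradicting $\|\tilde w_n\|_{H^s} = 1$.

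The computational core uses two consequences of \eqref{eq:FI}. Choosing $\sigma = 1/2$ cancels the quadratic-form term and produces the Pohozaev-type identity
\[
\frac{cs}{2(\beta+s)}\,\|w_n\|_{L^p}^{p} \;=\; I[w_n] - \tfrac{1}{2}\langle I'[w_n], w_n \rangle.
\]
Since $cs/(\beta + s)\neq 0$ in both ranges $\beta < -2s$ and $0 < \beta$ of \eqref{eq:AD}, the Palais--Smale hypothesis \eqref{eq:PSS} yields
\[
\|w_n\|_{L^p}^{p} \,\lesssim\, 1 + \epsilon_n \|w_n\|_{H^s}, \qquad \epsilon_n := \|I'[w_n]\|_{H^{-s}} \to 0.
\]
Substituting this into the identity $B(w_n,w_n) = c\|w_n\|_{L^p}^{p} + \langle I'[w_n], w_n \rangle$ gives $|B(w_n,w_n)| \lesssim 1 + \epsilon_n \|w_n\|_{H^s}$. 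Dividing these two bounds by $\|w_n\|_{H^s}^{p}$ and $\|w_n\|_{H^s}^{2}$ respectively, and using $p > 1$ throughout \eqref{eq:AD} (since either $\beta > 0$ or $\beta < -2s$ forces $2s/\beta > -1$), one finds $\|\tilde w_n\|_{L^p} \to 0$ and $B(\tilde w_n, \tilde w_n) \to 0$. In particular $\tilde w = 0$, so $\tilde w_n \to 0$ in both $L^p$ and $L^2$.

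It remains to upgrade the $L^2$-convergence to $H^s$-convergence. For $-m_0 < \beta < m_0$, the coercivity \eqref{eq:SLB1} gives $B(\tilde w_n, \tilde w_n) \geq \delta_0\|\tilde w_n\|_{H^s}^2 = \delta_0 > 0$, an immediate contradiction with $B(\tilde w_n,\tilde w_n)\to 0$. For the saddle-point range $-m_0 - 2s < \beta \leq -m_0$, decompose $\tilde w_n = \tilde y_n + \tilde z_n$ according to Lemma \ref{l:DSD}; since the orthogonal projection onto $Y$ is $L^2$-bounded and $\tilde w_n \to 0$ in $L^2$, $\tilde y_n \to 0$ in $L^2$ and hence also in $H^s$ by finite-dimensionality of $Y$. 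The positivity estimate \eqref{eq:LB2}/\eqref{eq:LB3} then gives
\[
\delta_1 \|\tilde z_n\|_{H^s}^{2} \leq B(\tilde w_n,\tilde w_n) - \mu_{m_0}(s,\beta)\|\tilde y_n\|_{L^2}^{2} \longrightarrow 0,
\]
so $\tilde z_n \to 0$ in $H^s$ and thus $\tilde w_n \to 0$ in $H^s$, as desired. The main subtlety lies precisely in this saddle case, where $B$ is non-positive on $Y$ and therefore does not control the full $H^s$-norm on its own; the argument succeeds because strong $L^2$-compactness alone suffices to make the $Y$-contribution vanish in the limit, after which \eqref{eq:SLB2} handles the $Z$-part.
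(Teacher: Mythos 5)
Your argument takes a genuinely different route from the paper's, and most of it is sound: the $\sigma=1/2$ Pohozaev identity, the normalization, the use of strong $L^{2}$-compactness to kill the finite-dimensional part $\tilde y_n$, and the coercivity \eqref{eq:SLB2} on $Z$ are all correct, and in the range $0<\beta<m_0$ the proof is complete, since there $\tfrac{cs}{2(\beta+s)}>0$ and the one-sided bound $\sup_n I[w_n]<\infty$ together with $|\langle I'[w_n],w_n\rangle|\leq \epsilon_n\|w_n\|_{H^s}$ really does yield $\|w_n\|_{L^p}^{p}\lesssim 1+\epsilon_n\|w_n\|_{H^s}$. For comparison, the paper disposes of the whole coercive range $-m_0<\beta<m_0$ in one line via \eqref{eq:Superlinear} with $\sigma$ strictly between $\tfrac{\beta}{2(\beta+s)}$ and $\tfrac12$, which gives boundedness directly with no contradiction or compactness argument; in the saddle range it tests $I'[w_n]$ against $\tilde\eta_n=-\tilde y_n+\tilde z_n$, which flips the sign of the $Y$-contribution so that $-B(\tilde y_n,\tilde y_n)$ and $B(\tilde z_n,\tilde z_n)$ are both non-negative, and crucially that argument never uses the value of $I[w_n]$ at all.

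The gap is in your very first estimate when $\beta<-2s$: there $\beta+s<0$, so the constant $\tfrac{cs}{2(\beta+s)}$ is \emph{negative}, and the identity reads $\|w_n\|_{L^p}^{p}=\tfrac{2(\beta+s)}{cs}\bigl(I[w_n]-\tfrac12\langle I'[w_n],w_n\rangle\bigr)$; an upper bound on $\|w_n\|_{L^p}^{p}$ therefore requires a \emph{lower} bound on $I[w_n]$, whereas hypothesis \eqref{eq:PSS} only supplies $\sup_n I[w_n]<\infty$. For $-m_0<\beta<-2s$ this is harmless because $\inf_X I>-\infty$ by Lemma \ref{FE}(i), so the missing lower bound is automatic. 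But in the saddle range $-m_0-2s<\beta\leq -m_0$ the functional is unbounded below on $Y$, so nothing in the stated hypotheses rules out $I[w_n]\to-\infty$, and the estimate $\|w_n\|_{L^p}^{p}\lesssim 1+\epsilon_n\|w_n\|_{H^s}$ — and everything downstream of it, including $B(\tilde w_n,\tilde w_n)\to 0$ and $\tilde w_n\to 0$ in $L^{2}$ — is not justified there. If you strengthen the hypothesis to $\sup_n|I[w_n]|<\infty$ (which is what actually holds for the Palais--Smale sequences in Lemma \ref{l:PSC}, where $I[w_n]\to c$), your proof closes as written; otherwise the saddle case needs the paper's test-function device, or some other step that avoids using the value of $I[w_n]$.
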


\vspace{5pt}

\begin{proof}
For $\beta$ satisfying \eqref{eq: positivebm_0}, the result follows from Proposition \ref{p:superlinearPSS}. We consider $\beta$ not satisfying \eqref{eq: positivebm_0}. Namely, $-m_0-2s<\beta\leq -m_0$ or $m_0+2-2s\leq \beta< m_0+2$. Suppose that $M_n=||w_n||_{H^{s}}$ diverges. Then, $\tilde{w}_n=w_n/M_n$ satisfies $||\tilde{w}_n||_{H^{s}}=1$. By the direct sum decomposition in Lemma \ref{l:DSD}, $\tilde{w}_n=\tilde{y}_n+\tilde{z}_n\in Y\oplus Z$ satisfies $||\tilde{w}_n||_{L^{2}}^{2}=||\tilde{y}_n||_{L^{2}}^{2}+||\tilde{z}_n||_{L^{2}}^{2}$. Since $Y$ is finite-dimensional, $\tilde{y}_n$ and $\tilde{z}_n$ are bounded in $H^{s}(\mathbb {T})$.

We first consider the superlinear case $m_0+2-2s\leq \beta< m_0+2$. We apply the identity \eqref{eq:FI} for $w_n$ and $\beta/(2\beta+2s)<\beta<1/2$ and divide it by $M_m^{2+2s/\beta}$. Then, $\tilde{w}_{n}\to 0$ in $L^{2+2s/\beta}(\mathbb{T})$. In particular, $\tilde{w}_{n}\to 0$ in $L^{2}(\mathbb{T})$. This implies that $\tilde{y}_n, \tilde{z}_n\to 0$ in $L^{2}(\mathbb{T})$. Since $Y$ is finite-dimensional, $\tilde{y}_n\to 0$ in $H^{s}(\mathbb{T})$. Since $\tilde{w}_n$ and $\tilde{z}_n$ are bounded in $H^{s}(\mathbb{T})$ and vanish on $L^{2}(\mathbb{T})$, $\tilde{w}_n$ and $\tilde{z}_n$ also vanish on $H^{r}(\mathbb{T})$ for $r<s$. By applying the bilinear estimates \eqref{eq:bilinear1} and \eqref{eq:bilinear2} for $\tilde{w}_n$, we obtain $\lim_{n\to\infty}B(\tilde{w}_n,\tilde{w}_n)\neq 0$. By using \eqref{eq:LB2}, $B(\tilde{w}_n,\tilde{w}_n)=B(\tilde{y}_n,\tilde{y}_n)+B(\tilde{z}_n,\tilde{z}_n)$ and $\lim_{n\to\infty}B(\tilde{z}_n,\tilde{z}_n)\neq 0$. By the bilinear estimates \eqref{eq:bilinear1}, $\lim_{n\to\infty}||\tilde{z}_n||_{H^{s}}^{2}\geq C$ for some constant $C>0$. By dividing \eqref{eq:FI} for $w_n$ by $M_n$ and using \eqref{eq:LB2} and \eqref{eq:SLB2},

\begin{align*}
\frac{1}{M_n^{2}}\left(I[w_n]-\sigma \langle I'[w_n],w_n\rangle\right)\geq \left(\frac{1}{2}-\sigma \right)B(\tilde{w}_n,\tilde{w}_n)\geq \left(\frac{1}{2}-\sigma \right)\left( -\mu_{m_0}||\tilde{y}_n||_{H^{s}}^{2}+\delta_1||\tilde{z}_n||_{H^{s}}^{2} \right).
\end{align*}\\
Letting $n\to\infty$ yields the contradiction $0\geq (1/2-\sigma)\delta_1 C>0$.

We next consider the sublinear case $-m_0-2s<\beta\leq -m_0$. We substitute $\tilde{\eta}_n=-\tilde{y}_n+\tilde{z}_n$ into \eqref{eq:FD} for $w_n$ and observe that 

\begin{align*}
\frac{1}{M_n}\langle I_n'[w_n],\tilde{\eta}_{n}\rangle
=B(\tilde{w}_n,\tilde{\eta}_n)
-\frac{1}{M_{n}^{-\frac{2s}{\beta}}}\langle c\tilde{w}_n |\tilde{w}_n|^{\frac{2s}{\beta}},\tilde{\eta}_{n}\rangle,
\end{align*}\\
and $\lim_{n\to\infty}B(\tilde{w}_n,\tilde{\eta}_n)=0$. By \eqref{eq:LB2}, there exists $C>0$ such that $-B(y,y)\geq C||y||_{L^{2}}^{2}$ for $y\in Y$. By \eqref{eq:SLB2},

\begin{align*}
0=\lim_{n\to\infty}B(\tilde{w}_n,\tilde{\eta}_n)
=\lim_{n\to\infty}\left(-B(\tilde{y}_n,\tilde{y}_n)+B(\tilde{z}_n,\tilde{z}_n)\right)
\geq \lim_{n\to\infty}\left(C||\tilde{y}_n||_{L^{2}}+\delta_1 ||\tilde{z}_n ||_{H^{s}}\right).
\end{align*}\\
This yields the contradiction $1=\lim_{n\to\infty}||\tilde{w}_n||_{H^{s}}=0$.

We thus conclude that $\{w_n\}$ are bounded on $H^{s}_{m_0,\textrm{odd}}(\mathbb{T})$ for $-m_0-2s<\beta\leq -m_0$ and $m_0+2-2s\leq \beta< m_0+2$.
\end{proof}

\vspace{5pt}

\begin{lem}\label{l:PSC}
Let $(m_0,s,\beta)$ satisfy \eqref{eq:AD}. The functional $I\in C^{1}(X; \mathbb{R})$ in \eqref{eq:F} satisfies (PS$)_c$ condition for any $c\in \mathbb{R}$.   
\end{lem}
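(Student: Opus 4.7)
The plan is to take a Palais--Smale sequence $\{w_n\}\subset X$ at level $c$, deduce boundedness in $X$ from Proposition \ref{p:PSbounded}, extract a weakly convergent subsequence, and upgrade it to strong convergence by combining the compactness in \eqref{eq:CE} with the coercivity of $B$ on a suitable subspace. First I would apply Proposition \ref{p:PSbounded} to obtain $\sup_n \|w_n\|_{H^s} < \infty$. By reflexivity, along a subsequence still denoted $\{w_n\}$, we have $w_n \rightharpoonup w$ in $X$ for some $w\in X$. The compact embedding \eqref{eq:CE} gives $w_n \to w$ strongly in $L^{2+2s/\beta}(\mathbb{T})$. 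Since the Nemytskii map $v \mapsto v|v|^{2s/\beta}$ sends $L^{2+2s/\beta}(\mathbb{T})$ continuously into its dual $L^{(2+2s/\beta)/(1+2s/\beta)}(\mathbb{T})$, and $\{w_n - w\}$ is bounded in $L^{2+2s/\beta}(\mathbb{T})$ and converges to $0$ there, H\"older's inequality yields
\[
\langle c w_n |w_n|^{2s/\beta}, w_n - w\rangle \to 0.
\]

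Next, evaluating \eqref{eq:FD} on $\eta = w_n - w$ and using $I'[w_n]\to 0$ in $X^*$ together with boundedness of $\|w_n - w\|_X$ yields $\langle I'[w_n], w_n - w\rangle \to 0$. Combining, we obtain
\[
B(w_n, w_n - w) = \langle I'[w_n], w_n - w\rangle + \langle c w_n |w_n|^{2s/\beta}, w_n - w\rangle \to 0.
\]
Moreover, $B(w, \cdot)$ is a continuous linear functional on $X$ by Lemma \ref{l:Bilinear}, so weak convergence gives $B(w, w_n - w) \to 0$. Subtracting the two displays yields $B(w_n - w, w_n - w) \to 0$.

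Finally, I would conclude strong convergence via the coercivity bounds in Lemma \ref{SLB}. For $-m_0 < \beta < m_0$, estimate \eqref{eq:SLB1} gives $\delta_0 \|w_n - w\|_{H^s}^2 \leq B(w_n - w, w_n - w) \to 0$. For $-m_0 - 2s < \beta \leq -m_0$, decompose $w_n - w = (y_n - y) + (z_n - z)$ using Lemma \ref{l:DSD}; since $Y$ is finite-dimensional, weak convergence in $X$ upgrades to strong convergence on the $Y$-component, hence $B(y_n - y, y_n - y) \to 0$. The $Y$--$Z$ orthogonality of $B$ from \eqref{eq:LB2} and \eqref{eq:LB3} then forces $B(z_n - z, z_n - z) \to 0$, and \eqref{eq:SLB2} yields $\delta_1\|z_n - z\|_{H^s}^2 \to 0$. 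This produces strong convergence of $w_n$ to $w$ in $X$, establishing the $(\mathrm{PS})_c$ condition for every $c\in\mathbb{R}$.

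The main obstacle is handling the nonlinear term: the compactness in \eqref{eq:CE} is essential, which is precisely why the hypothesis $1/2 - s < \beta$ must be imposed in the range $0 < s < 1/2$. The indefiniteness of $B$ in the saddle-point regime $-m_0 - 2s < \beta \leq -m_0$ is bypassed via the $Y \oplus Z$ splitting, converting weak convergence on the finite-dimensional $Y$ into strong convergence so that coercivity on $Z$ can be applied to the remaining component.
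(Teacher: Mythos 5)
Your proposal is correct, and its skeleton (boundedness from Proposition \ref{p:PSbounded}, weak convergence, killing the nonlinear pairing via the compact embedding \eqref{eq:CE}, deducing $B(w_n-w,w_n-w)\to 0$, then upgrading to strong $H^{s}$ convergence) matches the paper's proof. The only substantive difference is the final step. The paper concludes uniformly in $\beta$ from the norm equivalences \eqref{eq:bilinear1}--\eqref{eq:bilinear2}: one has $\|w_n-w\|_{H^{s}}^{2}\lesssim B(w_n-w,w_n-w)+C\|w_n-w\|_{H^{-1+s}}^{2}$, and the second term vanishes because $w_n\to w$ strongly in $L^{2}$, so no case distinction is needed and the indefiniteness of $B$ never enters. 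You instead split into the definite regime $-m_0<\beta<m_0$, where \eqref{eq:SLB1} gives coercivity directly, and the saddle-point regime, where you pass through the $Y\oplus Z$ decomposition of Lemma \ref{l:DSD}, upgrade weak to strong convergence on the finite-dimensional $Y$, and apply \eqref{eq:SLB2} on $Z$. Both routes are valid; the paper's is slightly cleaner in that it avoids the case analysis, while yours makes explicit where the indefiniteness of $B$ is absorbed (into the finite-dimensional negative part). Two minor points: you only need boundedness, not continuity, of the Nemytskii map $v\mapsto v|v|^{2s/\beta}$ into $L^{(2+2s/\beta)/(1+2s/\beta)}(\mathbb{T})$, since H\"older's inequality with $\|w_n-w\|_{L^{2+2s/\beta}}\to 0$ already suffices; and in the saddle-point step you should note that $(w_n,e_m)_{L^{2}}\to(w,e_m)_{L^{2}}$ for the finitely many modes spanning $Y$ follows from weak convergence, which is what turns the $Y$-component convergence into a strong one.
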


\vspace{5pt}

\begin{proof}
We take a Palais--Smale sequence $\{w_m\}\subset H^{s}_{m_0,\textrm{odd}}(\mathbb{T})$ at level $c\in \mathbb{R}$. By Propositions \ref{p:PSbounded}, $\{w_m\}$ is bounded in $H^{s}(\mathbb{T})$. By possibly choosing a subsequence, $w_n\rightharpoonup w$ in $H^{s}(\mathbb{T})$ and $w_n\to w$ in $L^{2}(\mathbb{T})$. By the weak convergence,

\begin{align*}
\lim_{n\to\infty} \langle I'[w],w-w_n\rangle =0.
\end{align*}\\
By the second condition of \eqref{eq:PSS},

\begin{align*}
\lim_{n\to\infty}\langle I'[w_n],w-w_n\rangle=0.
\end{align*}\\
By \eqref{eq:FD},

\begin{align*}
\langle I'[w]-I'[w_n],w-w_n\rangle =B(w-w_n,w-w_n)-c\left\langle w|w|^{\frac{2s}{\beta}}-w_n|w_n|^{\frac{2s}{\beta}},w-w_n\right\rangle.
\end{align*}\\
By the compact embedding \eqref{eq:CE}, the second term on the right-hand side vanishes as $n\to\infty$. By \eqref{eq:bilinear1} and \eqref{eq:bilinear2}, $w_n\to w$ in $H^{s}(\mathbb{T})$. Thus, $I$ satisfies the (PS$)_c$ condition.  
\end{proof}

\vspace{5pt}

\subsection{Functional estimates on subsets}

It remains to show estimates of the functional \eqref{eq:F} on proper subsets in the following three different regimes.\\

\begin{lem}\label{FE}
Let $(m_0,s,\beta)$ satisfy \eqref{eq:AD}. Then, the functional $I\in C^{1}(X; \mathbb{R})$ satisfies the following:

\noindent
(i) For $-m_0<\beta<-2s$, 

\begin{align}
\inf_{X}I>-\infty. \label{eq:FE1}
\end{align}\\
(ii) For $0<\beta<m_0+2-2s$, there exists $r_0>0$ and $w_0\in X$ such that $||w_0||_{X}>r_0$ and 

\begin{align}
\inf_{||w||_{X}=r_0 }I[w]> I[0]\geq I[w_0]. \label{eq:FE2}
\end{align}\\
(iii) For $m_0+2-2s\leq \beta<m_0+2$, there exists $\rho_0>r_0>0$ and $z_0\in Z$ such that 

\begin{equation}
\begin{aligned}
&\inf\left\{I[z]\ |\ ||z||_{H^{s}}=r_0,\ z\in Z\  \right\} \\
&>0\geq \max\left\{I[w]\ |\ w=y+\lambda z\in Y\oplus \mathbb{R}z_0,\ \lambda=0 \  \textrm{and}\ ||y||_{H^{s}}\leq \rho_0,\ \textrm{or}\ \lambda>0\ \textrm{and}\ ||w||_{H^{s}}=\rho_0\ \right\}
\end{aligned}
\label{eq: FE3}
\end{equation}\\
(iv) For $-m_0-2s<\beta\leq - m_0$, there exists $\rho_0>0$ such that 

\begin{align}
\inf_{Z}I>\max\{I[y]\ |\ ||y||_{H^{s}}=\rho_0,\ y\in Y\ \}.  \label{eq:FE4}
\end{align}
\end{lem}

\vspace{5pt}

\begin{proof}
We show (i). We take arbitrary $w\in H^{s}_{m_0,\textrm{odd}}(\mathbb{T})$. Since $1<2+2s/\beta<2$ for $-m_0<\beta<-2s$, we set $p=2/(2+2s/\beta)>1$ and apply H\"older's inequality to estimate 

\begin{align*}
\int_{\mathbb{T}}|w|^{2+\frac{2s}{\beta}}d\theta\leq ||w||_{L^{2}}^{\frac{2}{p}}(2\pi)^{\frac{1}{q}},
\end{align*}\\
where $1/p+1/q=1$. By Young's inequality, 

\begin{align*}
I[w]=\frac{1}{2}B(w,w)-\frac{c\beta}{2(\beta+s)}\int_{\mathbb{T}}|w|^{2+\frac{2s}{\beta}}d\theta
\geq \frac{1}{2}B(w,w)-\varepsilon||w||_{L^{2}}^{2}-\frac{C}{\varepsilon},
\end{align*}\\
for $\varepsilon>0$ and some constant $C>0$. Since  $|\beta|<m_0$, applying \eqref{eq:SLB1} implies that  

\begin{align*}
I[w]\geq \left(\frac{\delta_0}{2}-\varepsilon\right)||w||_{H^{s}}^{2}-\frac{C}{\varepsilon}.
\end{align*}\\
Thus, $\inf_{X}I$ is bounded from below. 

We show (ii). We take $r_0>0$ and $w\in H^{s}_{m_0,\textrm{odd}}(\mathbb{T})$ such that $||w||_{H^{s}}=r_0$. By the continuous embedding \eqref{eq:CE} and the lower bound of the bilinear form \eqref{eq:SLB2}, $||w||_{L^{2+2s/\beta}}\lesssim ||w||_{H^{s}}$ and 

\begin{align*}
I[w]=\frac{1}{2}B(w,w)-\frac{c\beta}{2(\beta+s)}\int_{\mathbb{T}}|w|^{2+\frac{2s}{\beta}}d\theta
\geq \frac{\delta_0}{2}||w||_{H^{s}}^{2}-C||w||_{H^{s}}^{2+\frac{2s}{\beta}}
=r_0^{2}\left(\frac{\delta_0}{2}-Cr_0^{\frac{2s}{\beta}}\right).
\end{align*}\\
By choosing sufficiently small $r_0>0$, $\inf_{||w||_{X}=r_0}I>0$. For $\tilde{w}_0\in H^{s}_{m_0,\textrm{odd}}(\mathbb{T})$ such that $||\tilde{w}_0||_{X}=r_0$,

\begin{align*}
I[\rho \tilde{w}_0]=\rho^{2}\left(\frac{1}{2}B(\tilde{w}_0,\tilde{w}_0)-\rho^{\frac{2s}{\beta}}\frac{c\beta}{2(\beta+s)}\int_{\mathbb{T}}|\tilde{w}_0|^{2+\frac{2s}{\beta}}d\theta\right)\to-\infty\quad \textrm{as}\ \rho\to\infty.
\end{align*}\\
Thus, there exists $\rho_0>1$ such that $0\geq I[w_0]$ for $w_0=\rho_0\tilde{w}_0$.

We show (iii). We take $r_0>0$ and $z\in Z$ such that $||z||_{H^{s}}=r_0$. In a similar way to (ii), we use the lower bound of the bilinear form \eqref{eq:SLB1} and estimate

\begin{align*}
I[z]=\frac{1}{2}B(z,z)-\frac{c\beta}{2(\beta+s)}\int_{\mathbb{T}}|z|^{2+\frac{2s}{\beta}}d\theta
\geq \frac{\delta_1}{2}||z||_{H^{s}}^{2}-C||z||_{H^{s}}^{2+\frac{2s}{\beta}}
=r_0^{2}\left(\frac{\delta_1}{2}-Cr_0^{\frac{2s}{\beta}}\right).
\end{align*}\\
By choosing sufficiently small $r_0>0$, $\inf\{ I[z]\ |\ z\in Z\ |\ ||z||_{H^{s}}=r_0  \}>0$. We set $z_0=r_0e_{m_0+l+1}/||e_{m_0+l+1}||_{H^{s}}$. For $y\in Y$ and $\lambda>0$, we set $w=y+\lambda z_0\in Y\oplus \mathbb{R}z_0$. Then, for $\rho>0$, $I[\rho w]\to -\infty$ as $\rho\to\infty$ since $\beta>0$ similarly as in the proof of (ii). Since $Y\oplus \mathbb{R}z_0$ is finite-dimensional, there exists $\rho_0>r_0$ such that 

\begin{align*}
\max\left\{I[w]\ |\ w=y+\lambda z\in Y\oplus \mathbb{R}z_0,\  \lambda>0\ \textrm{and}\ ||w||_{H^{s}}=\rho_0\ \right\}\leq 0.
\end{align*}\\
For $\lambda=0$, $B(y,y)\leq \mu_{m_0}(s,\beta)||y||_{L^{2}}^{2}\leq 0$ by \eqref{eq:LB2}. Thus,  

\begin{align*}
\max\left\{I[w]\ |\ w=y+\lambda z\in Y\oplus \mathbb{R}z_0,\ \lambda=0\ \textrm{and}\ ||w||_{H^{s}}\leq \rho_0\ \textrm{or}\   \lambda>0\ \textrm{and}\ ||w||_{H^{s}}=\rho_0\ \right\}\leq 0.
\end{align*}\\
It remains to show (iv). We take an arbitrary $z\in Z$. Since $2+2s/\beta<2$, we  apply Young's inequality for $p=2/(2+2s/\beta)$ and $\varepsilon>0$ to estimate

\begin{align*}
||z||_{H^{s}}^{2+\frac{2s}{\beta}}\leq \frac{1}{p}\varepsilon ||z||_{H^{s}}^{2}+\frac{1}{q\varepsilon^{\frac{q}{p}}},
\end{align*}\\
where $1/p+1/q=1$. By the continuous embedding \eqref{eq:CE} and \eqref{eq:SLB2}, 

\begin{align*}
I[z]=\frac{1}{2}B(z,z)-\frac{c\beta}{2(\beta+s)}\int_{\mathbb{T}}|z|^{2+\frac{2s}{\beta}}d\theta\geq \left(\frac{\delta_1}{2}-\varepsilon\right)||z||_{H^{s}}^{2} -C.
\end{align*}\\
Thus, $\inf_{Z}I>-\infty$. For an arbitrary $y\in Y$ and $\rho=||y||_{H^{s}}$, $B(y,y)\leq 0$ and  

\begin{align*}
I[y]=\frac{1}{2}B(y,y)-\frac{c\beta}{2(\beta+s)}\int_{\mathbb{T}}|y|^{2+\frac{2s}{\beta}}d\theta
\leq -\frac{c\beta}{2(\beta+s)}\rho^{2+\frac{2s}{\beta}}\inf\left\{||y||_{L^{2+\frac{2s}{\beta}}}^{2+\frac{2s}{\beta}}\ \middle|\ y\in Y,\ ||y||_{H^{s}}=1\   \right\}.
\end{align*}\\
Thus, there exists $\rho_0>0$ such that $\max\{I[y]\ |\ y\in Y,\ ||y||_{H^{s}}=\rho_0 \}<\inf_{Z}I$.
\end{proof}

\vspace{5pt}

\begin{proof}[Proof of Theorem \ref{t:thm} (Existence)]
For $(m_0,s,\beta)$ satisfying \eqref{eq:AD}, the functional $I\in C^{1}(X; \mathbb{R})$ satisfies (PS$)_c$ condition for any $c\in \mathbb{R}$ by Lemma \ref{l:PSC} and the inequalities \eqref{eq:FE1}-\eqref{eq:FE4} by Lemma \ref{FE}. We apply Lemmas \ref{l:Min}, \ref{l:MP}, \ref{l:LT}, and \ref{l:SP}, and deduce the existence of critical points of $I$. By Lemmas \ref{l:1/2<s}, \ref{l:s=1/2} and \ref{l:s<1/2}, we obtain the desired regularity of critical points. 
\end{proof}

\section{Numerical experiments}\label{s:7}

In this section we describe how to numerically compute some of the solutions proved in Theorem \ref{t:thm}.  See below for different depictions of the solutions.

Given the explicit expression of the multiplier, the computation has been reduced to solving \eqref{eq:HSgSQG} for any given $m_0$ in the subspace of functions with frequencies bounded by $M$. In order to account for potential cancellations between the Gamma functions, we use log-Gamma function evaluations instead of Gamma function ones. Previous expressions of the multiplier \eqref{eq:Formula} (which had a more cumbersome formula as an infinite series) were implemented using a Wynn $\varepsilon$-acceleration method.

To compute a solution for a fixed $(s,\beta,m_0)$, we used the Levenberg-Marquardt algorithm \cite{Levenberg:Levenberg-Marquardt,Marquardt:Levenberg-Marquardt}. 
Our discretization variables were the values of the Fourier coefficients of $w$. From those, we calculated the Fourier coefficients of $g$ , and we evaluated  equation \eqref{eq:HSgSQG} at  gridpoints $x_i = \frac{2\pi}{N}i, \, 0 \leq i \leq N-1$ via FFT with an additional normalization in $w$ to have $L^2$-norm equal to 1. The initial condition was taken to be mode $m_0$ equal to 1 and the rest of the modes equal to 0. We did not appreciate any convergence issues and the solver finished within a few iterations.

In Figure \ref{fig:fixed_beta} we fixed $\beta = -1.6$ and varied $s$, even going beyond the threshold $\beta = -2s$ that leads to the Bahouri-Chemin solution where $g$ is a step function. In the case where $-2s < \beta$ we see that $g$ is suggested to be unbounded. In all region $-2s \leq \beta$ the solver struggles with a Gibbs phenomenon due to the tails being unresolved and the $g$ part of the solution presents some oscillations. In Figure \ref{fig:fixed_s} (where we fix $s$ instead of $\beta$) we see the same phenomenon as in the previous figure. We remark that in both Figures \ref{fig:fixed_beta}-\ref{fig:fixed_s} both solutions have limited regularity due to the algebraic decay of the tails.

\begin{figure}[h]
\includegraphics[scale=0.3]{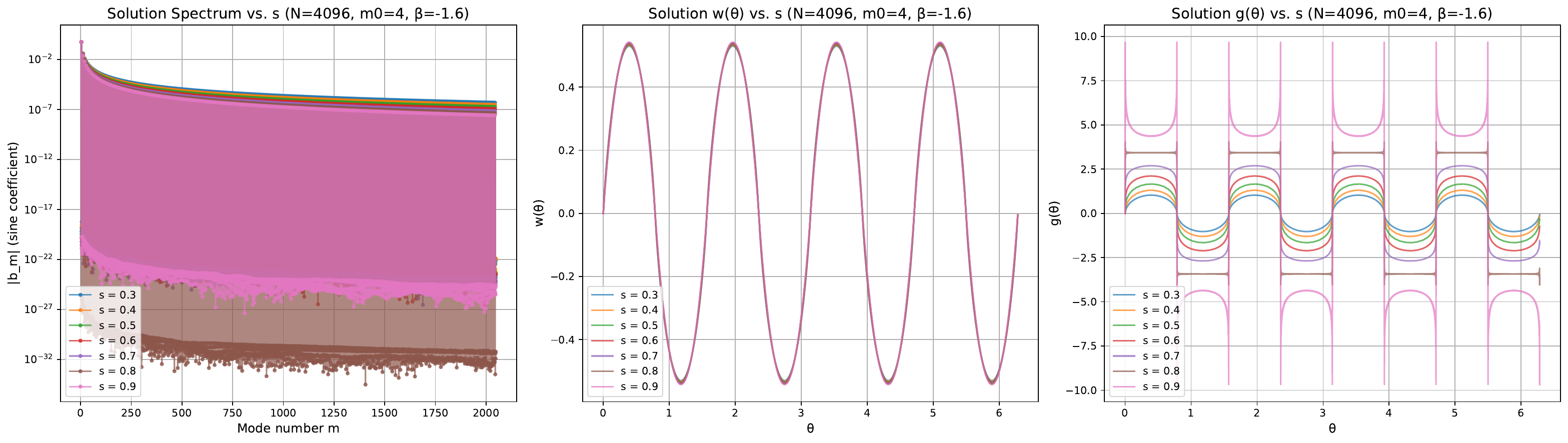}
\caption{Solution for $N = 4096$, $m_0 = 4$, $\beta = -1.6$ and $s \in \{0.3, 0.4, 0.5, 0.6, 0.7, 0.8, 0.9\}$. \\
Left: Fourier Spectrum of $w$, Middle: $w(\theta)$, Right: $g(\theta)$.}
\label{fig:fixed_beta}
\end{figure}

\begin{figure}[h]
\includegraphics[scale=0.3]{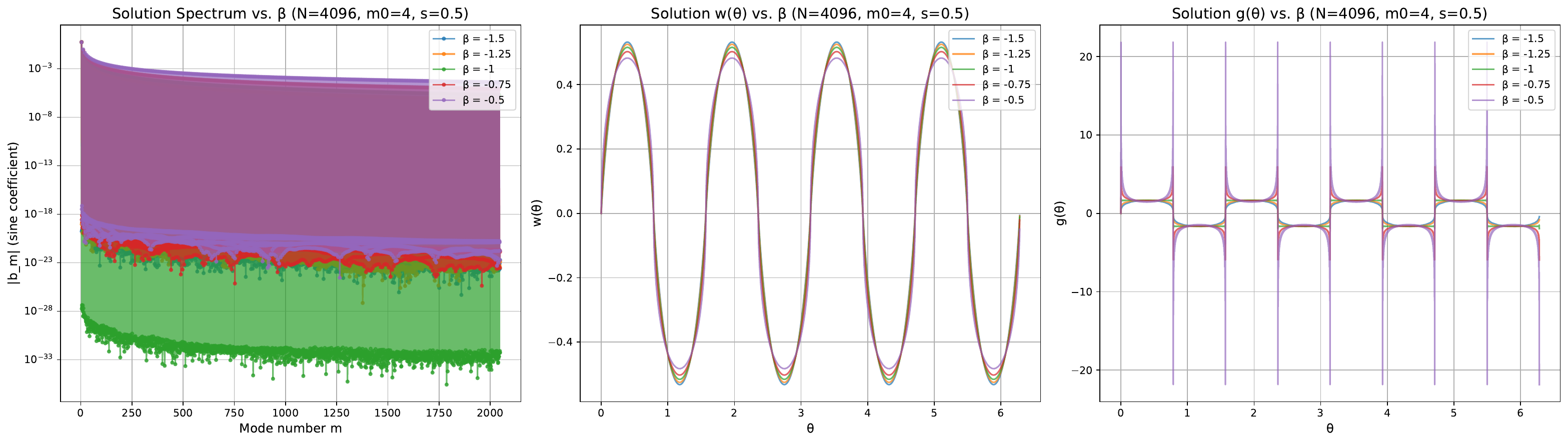}
\caption{Solution for $N = 4096$, $m_0 = 4$, $s = 0.5$ and $\beta \in \{-1.5, -1.25, -1, -0.75, -0.5\}$. \\
Left: Fourier Spectrum of $w$, Middle: $w(\theta)$, Right: $g(\theta)$.}
\label{fig:fixed_s}
\end{figure}

In Figure \ref{fig:fixed_beta_small} we tried to capture the limit $\beta \to 0$ that formally gives rise to $g(\theta)$ being a Dirac delta (or a sum of Dirac deltas). We fixed $N = 4096, m_0 = 4, s = 0.5$ and pushed $\beta \to 0$. We can visually see how $g$ converges to a Dirac delta while $w$ converges to some (potentially unbounded) limit.

\begin{figure}[h]
\includegraphics[scale=0.3]{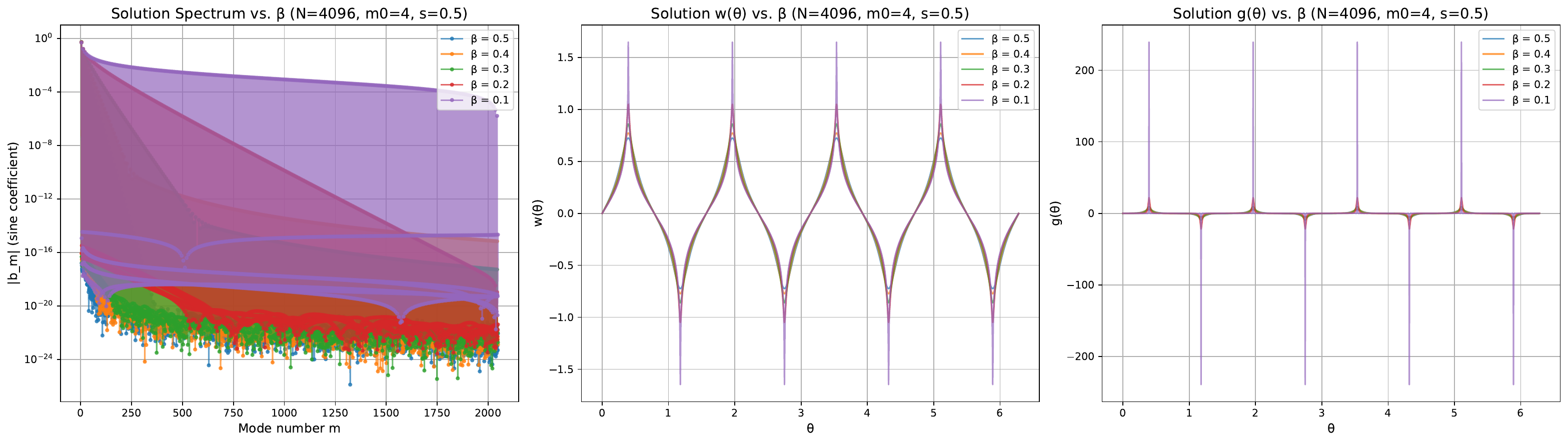}
\caption{Solution for $N = 4096$, $m_0 = 4$, $s = 0.5$ and $\beta \in \{0.1, 0.2, 0.3, 0.4, 0.5\}$. \\
Left: Fourier Spectrum of $w$, Middle: $w(\theta)$, Right: $g(\theta)$.}
\label{fig:fixed_beta_small}
\end{figure}

Finally, we performed a resolution study in $N$ by fixing $m_0 = 4, s = 0.2, \beta = -0.6$ and increasing $N$. We see a very accurate matching of the solutions (cf. Left panel) as well as a decay in the oscillations and convergence (Middle and Right panels). The leftmost panel suggests a limited amount of regularity due to the algebraic decay of the tails. In order to quantify that more accurately, we fit the Fourier coefficients to the function $C/m^\alpha$ using least squares. A side to side comparison can be found in Figure \ref{fig:spectrum}. We found that $\alpha = 1.589$, leading to a regularity of $w \in C^{0.589}$ which is very close to the expected $w \in C^{-\beta}$ (i.e. $w \in C^{0.6}$ in this case). These results suggest a possible pathway to improve the simulation by using other (non-Fourier) elements that can cancel out the algebraic decay of the Fourier coefficients (or similarly, the singular part of the solution). A good choice of complementary elements might be Clausen functions (see \cite{Dahne:highest-cusped-waves-fkdv,Dahne-GomezSerrano:highest-wave-burgershilbert} for instances where they have been successfully used) due to their well-suitedness with Fourier multipliers.

\begin{figure}[h]
\includegraphics[scale=0.3]{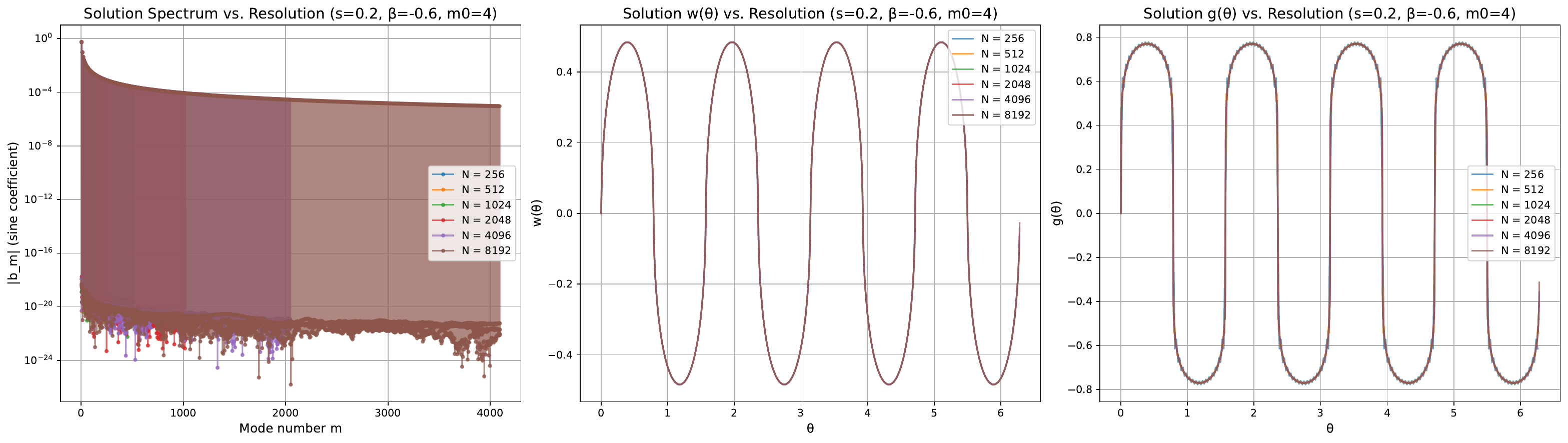}
\label{fig:fixed_sbeta_Nvariable}
\caption{Solution for $m_0 = 4$, $s = 0.2$, $\beta = -0.6$ and $N \in \{ 256, 512, 1024, 2048, 4096, 8192\}$. \\
Left: Fourier Spectrum of $w$, Middle: $w(\theta)$, Right: $g(\theta)$.}
\end{figure}

\begin{figure}[h]
\includegraphics[scale=0.5]{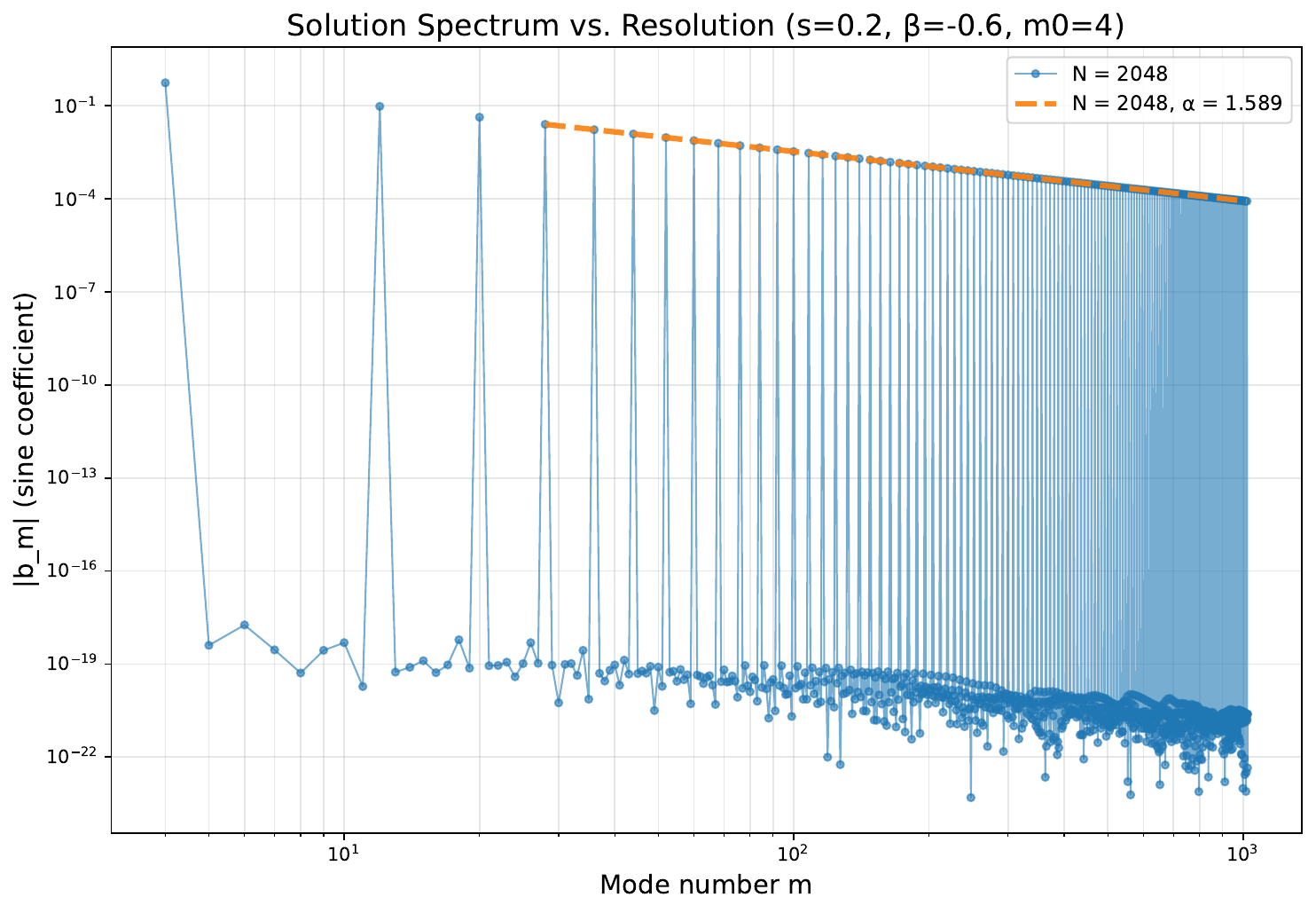}\caption{Solution for $m_0 = 4$, $s = 0.2$, $\beta = -0.6$ and $N = 2048$. Fit to a power law.}\label{fig:spectrum}
\end{figure}

\appendix

\section{Monotonicity of the ratio of the Gamma functions}\label{a:A}

We show monotonicity of the ratio of the Gamma functions used in the proof of Proposition \ref{p: monotonem} by using properties of the digamma function $\psi(x)$.

\begin{prop}\label{p: digammaestimate}
Let $0<s<1$. Then,

\begin{align}
0<\psi(r+1)-\psi(r+1-s)\leq \frac{s}{r},\quad r>0.  \label{eq: digammaesti}
\end{align}
\end{prop}

\begin{proof}
The left inequality follows from the monotonicity of the digamma function $\psi(x)$ for $x>0$. We use the representation formula \cite[6.3.22]{AbS},

\begin{align*}
\psi(x)=-\gamma+\int_{0}^{1}\frac{1-t^{x-1}}{1-t}dt,\quad x>0,
\end{align*}\\
with Euler's constant $\gamma=0.5772\cdots$. By using 

\begin{align*}
1-t^{s}=\int_{t}^{1}s\rho^{s-1}d\rho\leq s(1-t)t^{s-1},
\end{align*}\\
we obtain

\begin{align*}
0<\psi(r+1)-\psi(r+1-s)=\int_{0}^{1}\frac{t^{r-s}-t^{r}}{1-t}dt
=\int_{0}^{1}\frac{t^{r-s}(1-t^{s})}{1-t}dt\leq s\int_{0}^{1}t^{r-1}dt=\frac{s}{r}.
\end{align*}
\end{proof}

\begin{lem}\label{l: monotonegamma}
Let $0<s<1$. The function $f(t)=\Gamma(t+s)/\Gamma(t)$ is increasing for $t>-s$.
\end{lem}

\vspace{5pt}

\begin{proof}
The function $f(t)$ is positive for $t>0$. By differentiating $\log{f(t)}=\log{\Gamma(t+s)}-\log{\Gamma(t)}$, 

\begin{align*}
\frac{f'(t)}{f(t)}=\psi(t+s)-\psi(t).
\end{align*}\\
Since $\psi(t)$ is increasing for $t>0$, so is $f(t)$ for $t>0$. For $-s<t\leq 0$, we set $0<r=t+s\leq s$ and use the property $\Gamma(x+1)=x\Gamma(x)$ to observe that

\begin{align*}
f(t)=\frac{\Gamma(t+s)}{\Gamma(t)}=\frac{\Gamma(t+s+1)}{\Gamma(t+1)}\left(1-\frac{s}{t+s}\right)=\frac{\Gamma(r+1)}{\Gamma(r+1-1)}\left(1-\frac{s}{r}\right):=g(r).
\end{align*}\\
Since $h(r)=\Gamma(r+1)/\Gamma(r+1-s)$ is positive, we differentiate $\log{h(r)}=\log{\Gamma(r+1)}-\log{\Gamma(r+1-s)}$ and observe that

\begin{align*}
\frac{h'(r)}{h(r)}=\psi(r+1)-\psi(r+1-s).
\end{align*}\\
By applying the estimate \eqref{eq: digammaesti},

\begin{align*}
g'(r)=\left(\psi(r+1)-\psi(r+1-s) \right)h(r)\left(1-\frac{s}{r}\right)+h(r)\frac{s}{r^{2}} 
&=h(r)\left\{\left(\psi(r+1)-\psi(r+1-s) \right)\left(1-\frac{s}{r}\right)+\frac{s}{r^{2}}\right\}\\
&\geq sh(r)\left(\frac{1}{r}+\frac{1-s}{r^{2}}\right)>0.
\end{align*}\\
Thus, $f(t)$ is increasing for $-s<t\leq 0$.
\end{proof}

\begin{lem}\label{l: monotonegamma2}
Let $0<s<1$ and $\beta\leq -2s$. The function 

\begin{align}
f(t)f(t-\beta+1-s)=\frac{\Gamma(t+s)\Gamma(t-\beta+1)}{\Gamma(t)\Gamma(t-\beta+1-s)}   \label{eq: gammainc}
\end{align}\\
is increasing for $t>-s$.
\end{lem}

\vspace{5pt}

\begin{proof}
Since $f(t)$ and $f(t-\beta+1-s)$ are positive and increasing for $t>0$ by Lemma \ref{l: monotonegamma}, $f(t)f(t-\beta+1-s)$ is also increasing. We consider the case $-s<t\leq 0$. We set $0<r=t+s\leq s$ and observe that

\begin{align*}
f(t)f(t-\beta+1-s)=\frac{\Gamma(r+1)\Gamma(r-\beta+1-s)}{\Gamma(r-s+1)\Gamma(r-\beta+1-2s)}\left(1-\frac{s}{r}\right):=\tilde{g}(r).
\end{align*}\\
By differentiating the gamma functions,

\begin{align*}
&\frac{d}{dr}\left(\frac{\Gamma(r+1)\Gamma(r-\beta+1-s)}{\Gamma(r-s+1)\Gamma(r-\beta+1-2s)}\right)\\
&=(\psi(r+1)-\psi(r+1-s)+\psi(r-\beta+1-s)-\psi(r-\beta+1-2s) )\left(\frac{\Gamma(r+1)\Gamma(r-\beta+1-s)}{\Gamma(r-s+1)\Gamma(r-\beta+1-2s)}\right).
\end{align*}\\
By applying the estimate \eqref{eq: digammaesti} and using $-2s-\beta\geq 0$, 

\begin{align*}
&\tilde{g}'(r)\left(\frac{\Gamma(r+1)\Gamma(r-\beta+1-s)}{\Gamma(r-s+1)\Gamma(r-\beta+1-2s)}\right)^{-1} \\
&=(\psi(r+1)-\psi(r+1-s)+\psi(r-\beta+1-s)-\psi(r-\beta+1-2s) )\left(1-\frac{s}{r^{2}}\right)+\frac{s}{r^{2}} \\
&\geq \left(\frac{s}{r}+\frac{s}{r-\beta-s} \right) \left(1-\frac{s}{r^{2}}\right)+\frac{s}{r^{2}}
=\frac{s}{r}\left(\frac{1-s}{r}+\frac{(2r-2s-\beta)}{(r-\beta-s)} \right)>0.
\end{align*}\\
Thus, $f(t)f(t-\beta+1-s)$ is increasing for $-s<t\leq 0$.
\end{proof}

\vspace{5pt}

\section{Sobolev inequality on the torus}\label{a:B}

\begin{thm}[Sobolev inequality]\label{t:Sobolev}
Let $0<s<1$ and $1<p<\infty$.\\
\noindent  
(i) If $s<1/p$,

\begin{align}
H^{s,p}(\mathbb{T})&\subset L^{q}(\mathbb{T}),\quad \frac{1}{q}=\frac{1}{p}-s, \label{eq:S1}\\
H^{s,p}(\mathbb{T})&\subset\subset L^{r}(\mathbb{T}),\quad  1\leq r<q.  \label{eq:S2}
\end{align}
(ii) If $s>1/p$, 

\begin{align}
H^{s,p}(\mathbb{T})\subset C^{\gamma}(\mathbb{T}),\quad \gamma=s-\frac{1}{p}.  \label{eq:S3}
\end{align}
\end{thm}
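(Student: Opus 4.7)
The plan is to reduce both parts to pointwise and weak-type estimates for the Bessel-like kernel
\[ K_s(\theta) := \sum_{m \in \mathbb{Z}} \langle m \rangle^{-s} e^{im\theta} \]
on the torus. Any $f \in H^{s,p}(\mathbb{T})$ can be written as $f = K_s * g$ with $g := (\langle m\rangle^s \hat{f}_m)^{\vee} \in L^p(\mathbb{T})$ and $\|g\|_{L^p} = \|f\|_{H^{s,p}}$, so both conclusions will follow once I establish suitable bounds on $K_s$. The key kernel estimate I would prove is: $K_s \in C^\infty(\mathbb{T} \setminus \{0\})$, and near $\theta = 0$,
\[ |K_s(\theta)| \lesssim |\theta|^{s-1}, \qquad |K_s'(\theta)| \lesssim |\theta|^{s-2}. \]
I would derive these via Poisson summation from the Bessel potential $G_s$ on $\mathbb{R}$, whose Fourier transform is $(1+\xi^2)^{-s/2}$, which satisfies $G_s(x) \sim c_s |x|^{s-1}$ as $x \to 0$ (for $0 < s < 1$) and decays exponentially at infinity; writing $K_s(\theta) = \mathrm{const}\cdot \sum_{k \in \mathbb{Z}} G_s(\theta + 2\pi k)$, the $k = 0$ term supplies the prescribed singularity while the tail sum is smooth on all of $\mathbb{T}$. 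The derivative bound comes from the same periodization of $G_s'$.

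For part (i), the bound $|K_s(\theta)| \lesssim |\theta|^{s-1}$ places $K_s$ in the weak Lebesgue space $L^{1/(1-s),\infty}(\mathbb{T})$, and the Hardy--Littlewood--Sobolev convolution inequality in its weak-type formulation gives $\|f\|_{L^q} = \|K_s * g\|_{L^q} \lesssim \|g\|_{L^p}$ whenever $1/q = 1/p - s > 0$, which is \eqref{eq:S1}. For the compact embedding \eqref{eq:S2}, I would use a Fourier truncation argument: given a sequence $\{f_n\}$ bounded in $H^{s,p}$, write $f_n = P_N f_n + (I - P_N) f_n$ with $P_N$ the projection onto $|m| \le N$. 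The low-frequency projection $P_N f_n$ lives in a fixed finite-dimensional space with uniformly bounded Fourier coefficients, so a diagonal extraction produces a subsequence converging in every $L^r$. The tail satisfies $\|(I - P_N) f_n\|_{L^q} \to 0$ as $N \to \infty$ uniformly in $n$ by applying the weak-type HLS estimate with the truncated kernel $K_s \mathbf{1}_{|m|>N}$, and H\"older's inequality interpolates to $L^r$ for any $r < q$.

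For part (ii), when $s > 1/p$ one has $(s-1)p' > -1$ with $p' = p/(p-1)$, hence $K_s \in L^{p'}(\mathbb{T})$ and $\|f\|_{L^\infty} \lesssim \|K_s\|_{L^{p'}} \|g\|_{L^p}$ by H\"older. To control the H\"older seminorm, fix $\theta_1, \theta_2 \in \mathbb{T}$ with $h := |\theta_1 - \theta_2| \le 1$ and split
\[ f(\theta_1) - f(\theta_2) = \Bigl( \int_{|q - \theta_1| \le 2h} + \int_{|q - \theta_1| > 2h} \Bigr) \bigl[ K_s(\theta_1 - q) - K_s(\theta_2 - q) \bigr] g(q)\, dq. \]
On the near region, bound each of the two kernel values separately by $|\theta_i - q|^{s-1}$ and apply H\"older; the contribution is $\lesssim h^{s - 1/p} \|g\|_{L^p}$. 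On the far region, the mean value theorem combined with $|K_s'(\theta)| \lesssim |\theta|^{s-2}$ gives $|K_s(\theta_1 - q) - K_s(\theta_2 - q)| \lesssim h |\theta_1 - q|^{s-2}$, and H\"older again produces the same bound $\lesssim h^{s-1/p} \|g\|_{L^p}$, yielding $f \in C^{s-1/p}(\mathbb{T})$.

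The main technical step, and the only real work, will be the derivation of the pointwise kernel estimates for $K_s$ and $K_s'$: once these are in hand, the proofs of (i) and (ii) reduce to textbook potential-type inequalities transported from $\mathbb{R}$ to $\mathbb{T}$. Poisson summation makes the kernel bounds transparent, but a self-contained alternative is Abel summation applied directly to the Fourier series defining $K_s$, separating the mild high-frequency oscillations from the explicit singular behavior.
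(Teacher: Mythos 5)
Your kernel bounds for $K_s$ and $K_s'$, the weak-type Young/HLS deduction of \eqref{eq:S1}, and the near/far potential-estimate proof of \eqref{eq:S3} are all correct and essentially coincide with the paper's setup: your $K_s$ is the paper's periodized Bessel kernel $F_s^{\mathrm{per}}$ up to normalization, and the paper uses the same pointwise singularity estimates (it simply cites \eqref{eq:S1} from the literature rather than proving it).

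The genuine gap is in your compactness argument for \eqref{eq:S2}. You claim $\|(I-P_N)f_n\|_{L^q}\to 0$ as $N\to\infty$ uniformly over a bounded set of $H^{s,p}(\mathbb{T})$, ``by applying the weak-type HLS estimate with the truncated kernel $K_s\mathbf{1}_{|m|>N}$.'' This fails for two reasons. First, the mechanism cannot deliver it: the truncated kernel differs from $K_s$ by a trigonometric polynomial, so it retains the full singularity $\sim|\theta|^{s-1}$ at the origin and its $L^{1/(1-s),\infty}$ quasinorm does not tend to zero. Second, the conclusion itself is impossible: combined with the finite-dimensional compactness of $P_N$, uniform smallness of the tail in $L^q$ would make the \emph{critical} embedding $H^{s,p}(\mathbb{T})\subset L^{q}(\mathbb{T})$ compact, which is false by the concentration family $f_\lambda(\theta)=\lambda^{-1/q}\phi(\theta/\lambda)$: this is bounded in $H^{s,p}$, has $\|f_\lambda\|_{L^q}$ constant, and converges weakly to $0$, so no subsequence converges in $L^q$. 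The follow-up sentence ``H\"older's inequality interpolates to $L^r$'' does not repair this, since you are interpolating from a false endpoint (and if the $L^q$ claim were true you would not need to interpolate at all). The fix is to measure the tail in a strictly subcritical norm before interpolating: for instance $\|(I-P_N)f\|_{L^p}\le\|K_s^{>N}\|_{L^1}\|g\|_{L^p}$ with $\|K_s^{>N}\|_{L^1}\to 0$ (partial sums of $K_s$ converge in $L^\sigma$ for $1<\sigma<1/(1-s)$ because $K_s\in L^\sigma$), then interpolate against the uniform $L^q$ bound to reach $L^r$ for $r<q$. This is exactly the point where the paper's route is more careful: it replaces $P_N$ by the heat semigroup and proves $\|T(t)f-f\|_{L^r}\le\|T(t)F_s^{\mathrm{per}}-F_s^{\mathrm{per}}\|_{L^\sigma}\|f\|_{H^{s,p}}$ with $1/r=1/\sigma+1/p-1$, where the strict inequality $r<q$ enters precisely through the requirement $\sigma<1/(1-s)$ needed for $F_s^{\mathrm{per}}\in L^\sigma(\mathbb{T})$.
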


\vspace{5pt}

The continuous embedding \eqref{eq:S1} is due to \cite[Proposition 1.1]{BO13}. We give a proof for the compact embedding \eqref{eq:S2} and the embedding into the H\"older space \eqref{eq:S3} by using the heat kernel and Bessel potential kernel in $\mathbb{R}$:

\begin{align*}
G_t(\theta)=\frac{1}{\sqrt{4\pi t}}e^{-\frac{\theta^{2}}{4t}},\quad F_s(\theta)=\frac{1}{2^{\frac{s}{2}-1 }\Gamma(\frac{s}{2})\sqrt{2\pi}|\theta|^{\frac{1-s}{2}}}K_{\frac{1-s}{2}}(|\theta|),
\end{align*}\\
where $K_{\nu}(|\theta|)$ denotes the $\nu$-th order modified Bessel function of the second kind, e.g., \cite[V, (26)]{Stein70}. We set the heat semigroup and Bessel potential on the torus by using periodized kernels, e.g., \cite{RS16}:

\begin{align*}
T(t)f&=G_{t}^{\textrm{per}}*f,\quad G_{t}^{\textrm{per}}(\theta)
=\sum_{m\in \mathbb{Z}}G_t(\theta+2\pi m)
=\frac{1}{2\pi}\sum_{m\in \mathbb{Z}}e^{-t|m|^{2}}e^{im\theta},\quad t>0,\\
S(s)f&=F_{s}^{\textrm{per}}*f,\quad F_{s}^{\textrm{per}}(\theta)
=\sum_{m\in \mathbb{Z}}F_s(\theta+2\pi m)
=\frac{1}{2\pi}\sum_{m\in \mathbb{Z}}\langle m \rangle^{-s}e^{im\theta},\quad 0<s<1.
\end{align*}\\
The $L^{1}$-norm of the kernel $G_t^{\textrm{per}}(\theta)$ is one and $T(t)$ is bounded on $L^{\sigma}(\mathbb{T})$ for $1<\sigma<\infty$. Moreover, $T(t)$ is a $C_0$-semigroup on $L^{\sigma}(\mathbb{T})$ for $1<\sigma<\infty$ by the continuity on $L^{2}(\mathbb{T})$ and density of smooth functions on $L^{\sigma}(\mathbb{T})$. The kernel $F_{s}$ satisfies $|F_{s}(\theta)|\lesssim \theta^{-1+s}$ and $|F_{s}'(\theta)|\lesssim \theta^{-2+s}$ near the origin and the same estimates hold for the periodized kernel $F_{s}^{\textrm{per}}$ for all $\theta\in \mathbb{T}$. In particular, $F_{s}^{\textrm{per}}(\theta)$ is $L^{\sigma}$ integrable on $\mathbb{T}$ for $1<\sigma <1/(1-s)$. The operator $S(s)$ and its inverse are expressed as 

\begin{align*}
S(s)f=F_{s}^{\textrm{per}}*f=2\pi \sum_{m\in \mathbb{Z}}\hat{F}^{\textrm{per}}_{s}(m)\hat{f}_me^{im\theta}=\sum_{m\in \mathbb{Z}}<m>^{s}\hat{f}_me^{im\theta}=(<m>^{s}\hat{f}_m)^{\lor},
\end{align*}\\
and $S(s)^{-1}f=S(-s)f=(<m>^{-s}\hat{f}_m)^{\lor}$. The kernel estimates for $F_{s}^{\textrm{per}}(\theta)$ yield the H\"older estimate

\begin{align}
||S(s)g||_{C^{\gamma}}\lesssim ||g||_{L^{p}},\quad \gamma=s-\frac{1}{p}>0,\label{eq:S4}
\end{align}\\
and the embedding into the H\"older space \eqref{eq:S3} by choosing $g=S(-s)f$ and using $||S(-s)f||_{L^{p}}=||f||_{H^{s,p}}$. We show the compact embedding \eqref{eq:S2}.

\vspace{5pt}

\begin{prop}
Let $0<s<1$ and $1<p<\infty$ satisfy $1/q=1/p-s>0$. Then, the estimate   

\begin{align}
||T(t)f-f||_{L^{r}}
\leq \left\|T(t)F_{s}^{\textrm{per}}-F_{s}^{\textrm{per}}\right\|_{L^{\sigma}}||f||_{H^{s,p}},\quad f\in H^{s,p}(\mathbb{T}),   \label{eq: Convergence}
\end{align}\\ 
holds for $r<q$ and $\sigma$ satisfying $1/r=1/\sigma+1/p-1$.
\end{prop}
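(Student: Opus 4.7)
The plan is to reduce the estimate to Young's convolution inequality on $\mathbb{T}$ via the Bessel potential representation. The starting point is to write $f \in H^{s,p}(\mathbb{T})$ as $f = S(s)g$ with $g = S(-s)f$, so that $\|g\|_{L^p(\mathbb{T})} = \|f\|_{H^{s,p}(\mathbb{T})}$ by the very definition of the Bessel potential space and the operator $S(-s)$ recalled in the excerpt.

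Next I would exploit the fact that both $T(t)$ and $S(s)$ act as convolutions on $\mathbb{T}$ (with periodized kernels $G_t^{\textrm{per}}$ and $F_s^{\textrm{per}}$), so they commute. This gives
\[
T(t)f - f = T(t)\bigl(F_s^{\textrm{per}} * g\bigr) - F_s^{\textrm{per}} * g = \bigl(T(t)F_s^{\textrm{per}} - F_s^{\textrm{per}}\bigr) * g.
\]
Applying Young's convolution inequality on $\mathbb{T}$, with the exponent relation $1/r = 1/\sigma + 1/p - 1$, yields
\[
\|T(t)f - f\|_{L^r} \leq \|T(t)F_s^{\textrm{per}} - F_s^{\textrm{per}}\|_{L^\sigma}\, \|g\|_{L^p},
\]
which is exactly \eqref{eq: Convergence} after substituting $\|g\|_{L^p} = \|f\|_{H^{s,p}}$.

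The only consistency check needed is on the exponents: the hypothesis $r < q$ together with $1/q = 1/p - s$ translates to $1/\sigma > 1 - s$, i.e.\ $\sigma < 1/(1-s)$, which is precisely the range in which the periodized kernel $F_s^{\textrm{per}}$, and hence also $T(t)F_s^{\textrm{per}} - F_s^{\textrm{per}}$, lies in $L^\sigma(\mathbb{T})$, by the pointwise bound $|F_s^{\textrm{per}}(\theta)| \lesssim |\theta|^{-(1-s)}$ mentioned in the setup. There is essentially no hard step in the proof itself; the main point of the proposition is downstream, namely that the bound on the right-hand side tends to $0$ as $t \to 0^+$ (by the $C_0$-semigroup property of $T(t)$ on $L^\sigma(\mathbb{T})$), which together with a Fréchet--Kolmogorov style argument furnishes the compact embedding \eqref{eq:S2}.
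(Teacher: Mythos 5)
Your proposal is correct and follows essentially the same route as the paper's own proof: factor $T(t)f-f=(T(t)F_s^{\textrm{per}}-F_s^{\textrm{per}})*(S(-s)f)$, apply Young's convolution inequality with $1/r=1/\sigma+1/p-1$, and verify that $r<q$ forces $\sigma<1/(1-s)$ so that $F_s^{\textrm{per}}\in L^\sigma(\mathbb{T})$. The exponent check and the remark about the downstream use for the compact embedding match the paper's argument.
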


\vspace{5pt}

\begin{proof}
We express $T(t)f-f$ by the convolution 

\begin{align*}
T(t)f-f
= \left(T(t)S(s)-S(s)\right)S(-s)f
=(T(t)F_{s}^{\textrm{per}}-F_{s}^{\textrm{per}})*(S(-s)f),
\end{align*}\\
apply Young's convolution inequality for $r<q$ and $\sigma$ satisfying $1/r=1/\sigma+1/p-1$,

\begin{align*}
||T(t)f-f||_{L^{r}}
\leq \left\|T(t)F_{s}^{\textrm{per}}-F_{s}^{\textrm{per}}\right\|_{L^{\sigma}}||S(-s)f||_{L^{p}}
= \left\|T(t)F_{s}^{\textrm{per}}-F_{s}^{\textrm{per}}\right\|_{L^{\sigma}}||f||_{H^{s,p}}.
\end{align*}\\
By $s=1/p-1/q>1/p-1/r=1-1/\sigma$ and $F_{s}^{\textrm{per}}\in L^{\sigma}(\mathbb{T})$ for $1<\sigma <1/(1-s)$, the right-hand side is finite.
\end{proof}

\vspace{5pt}

\begin{proof}[Proof of Theorem \ref{t:Sobolev}]
For an arbitrary bounded sequence $\{f_n\}\subset H^{s,p}(\mathbb{T})$, we set $f^{N}_{n}=T(1/N)f_n$ with integers $N\geq 1$. By choosing a subsequence for each $N$, the sequence $\{f_n^{N}\}$ converges in $L^{r}(\mathbb{T})$. By a diagonal argument, we take a subsequence such that $\{f_n^{N}\}$ converges for any $N\geq 1$. By applying the inequality \eqref{eq: Convergence}, 

\begin{align*}
||f_n-f_l||_{L^{r}}
&\leq  \left\|f_n-f_n^{N}\right\|_{L^{r}}+\left\|f_n^{N}-f_l^{N} \right\|_{L^{r}}+\left\|f_l^{N}-f_l\right\|_{L^{r}}\\
&\lesssim \left\|T\left(\frac{1}{N}\right)F_{s}^{\textrm{per}}-F_{s}^{\textrm{per}}\right\|_{L^{\sigma}}\sup_{n\geq 1}||f_n||_{H^{s,p}}+\left\|f_n^{N}-f_l^{N} \right\|_{L^{r}}.
\end{align*}\\
Letting $n,l\to\infty$ and then $N\to\infty$ imply that $\{f_n\}$ is a convergent sequence in $L^{r}(\mathbb{T})$.
\end{proof}

\vspace{5pt}


\begin{thebibliography}{WLGSB23}

\bibitem[AA24]{AA24}
A.~Ai and O.-N. Avadanei.
\newblock Well-posedness for the surface quasi-geostrophic front equation.
\newblock {\em Nonlinearity}, 37(5):Paper No. 055022, 41, (2024).

\bibitem[ABC{\etalchar{+}}24]{ABCDGK}
D.~Albritton, E.~Bru{\'e}, M.~Colombo, C.~De Lellis, V.~Giri, M.~Janisch, and
  H.~Kwon.
\newblock {\em {I}nstability and nonuniqueness for the 2d {E}uler equations in
  vorticity form, after {M}. {V}ishik}, volume 219.
\newblock Annals of Mathematics Studies, Princeton University Press, 2024.

\bibitem[Abe24]{Abe11}
K.~Abe.
\newblock {E}xistence of homogeneous {E}uler flows of degree $-\alpha\notin
  [-2,0]$.
\newblock {\em Arch. Rational Mech. Anal.}, 248(30), (2024).

\bibitem[AGJ]{AGJ}
K.~Abe, D.~Ginsberg, and I.-J. Jeong.
\newblock {S}tationary self-similar profiles for the two-dimensional inviscid
  {B}oussinesq equations.
\newblock \href{https://arxiv.org/abs/2410.21765}{arXiv:2410.21765}.

\bibitem[Aly94]{Aly94}
J.~J. Aly.
\newblock Asymptotic formation of a current sheet in an indefinitely sheared
  force-free field: an analytical example.
\newblock {\em Astronomy and Astrophysics}, 288:p.1012--1020, (1994).

\bibitem[AS64]{AbS}
M.~Abramowitz and I.~A. Stegun.
\newblock {\em Handbook of mathematical functions with formulas, graphs, and
  mathematical tables}, volume No. 55 of {\em National Bureau of Standards
  Applied Mathematics Series}.
\newblock U. S. Government Printing Office, Washington, DC, 1964.

\bibitem[BC94]{BC94}
H.~Bahouri and J.-Y. Chemin.
\newblock \'{E}quations de transport relatives \'a\ des champs de vecteurs
  non-lipschitziens et m\'ecanique des fluides.
\newblock {\em Arch. Rational Mech. Anal.}, 127(2):159--181, (1994).

\bibitem[BL15a]{BL1}
J.~Bourgain and D.~Li.
\newblock Strong ill-posedness of the incompressible {E}uler equation in
  borderline {S}obolev spaces.
\newblock {\em Invent. Math.}, 201(1):97--157, (2015).

\bibitem[BL15b]{BL2}
J.~Bourgain and D.~Li.
\newblock Strong illposedness of the incompressible {E}uler equation in integer
  {$C^m$} spaces.
\newblock {\em Geom. Funct. Anal.}, 25(1):1--86, (2015).

\bibitem[BO13]{BO13}
\'A. B\'enyi and T.~Oh.
\newblock The {S}obolev inequality on the torus revisited.
\newblock {\em Publ. Math. Debrecen}, 83(3):359--374, (2013).

\bibitem[CC10]{CC10}
{\'A}.~Castro and D.~C{\'o}rdoba.
\newblock Infinite energy solutions of the surface quasi-geostrophic equation.
\newblock {\em Adv. Math.}, 225(4):1820--1829, (2010).

\bibitem[CCC{\etalchar{+}}12]{CCCGW}
D.~Chae, P.~Constantin, D.~C\'{o}rdoba, F.~Gancedo, and J.~Wu.
\newblock Generalized surface quasi-geostrophic equations with singular
  velocities.
\newblock {\em Comm. Pure Appl. Math.}, 65(8):1037--1066, (2012).

\bibitem[CCGS20]{CCG20}
{\'A}.~Castro, D.~C{\'o}rdoba, and J.~G{\'o}mez-Serrano.
\newblock Global smooth solutions for the inviscid sqg equation.
\newblock {\em Memoirs of the AMS}, 266(1292):89 pages, (2020).

\bibitem[CCW11]{CCW3}
D.~Chae, P.~Constantin, and J.~Wu.
\newblock Inviscid models generalizing the two-dimensional {E}uler and the
  surface quasi-geostrophic equations.
\newblock {\em Arch. Ration. Mech. Anal.}, 202(1):35--62, (2011).

\bibitem[CCZ21]{CCZ21}
\'{A}. Castro, D.~C\'{o}rdoba, and F.~Zheng.
\newblock The lifespan of classical solutions for the inviscid surface
  quasi-geostrophic equation.
\newblock {\em Ann. Inst. H. Poincar\'{e} C Anal. Non Lin\'{e}aire},
  38(5):1583--1603, (2021).

\bibitem[CFMR05]{CFMR}
D.~C\'{o}rdoba, M.~A. Fontelos, A.~M. Mancho, and J.~L. Rodrigo.
\newblock Evidence of singularities for a family of contour dynamics equations.
\newblock {\em Proc. Natl. Acad. Sci. USA}, 102(17):5949--5952, (2005).

\bibitem[CFMS]{CFMS}
A.~Castro, D.~Faraco, F.~Mengual, and M.~Solera.
\newblock Unstable vortices, sharp non-uniqueness with forcing, and global
  smooth solutions for the {S}{Q}{G} equation.
\newblock \href{https://arxiv.org/abs/2502.10274}{arXiv:2502.10274}.

\bibitem[Che21]{Chen21}
J.~Chen.
\newblock On the slightly perturbed {D}e {G}regorio model on {$S^1$}.
\newblock {\em Arch. Ration. Mech. Anal.}, 241(3):1843--1869, (2021).

\bibitem[Che25]{Chen25}
J.~Chen.
\newblock On the regularity of the de gregorio model for the 3d euler
  equations.
\newblock {\em J. Eur. Math. Soc.}, 27(no. 4):pp. 1619--1677, (2025).

\bibitem[CHH21]{CHH21}
J.~Chen, T.~Y. Hou, and De~Huang.
\newblock {O}n the finite time blowup of the {D}e {G}regorio model for the 3{D}
  {E}uler equations.
\newblock {\em {C}omm. {P}ure {A}ppl. {M}ath.}, 74(6):1282--1350, (2021).

\bibitem[Cho]{Cho}
M.~Cho.
\newblock Long-time behavior of logarithmic spiral vortex sheets with two
  branches.
\newblock \href{https://arxiv.org/abs/2312.02072}{arXiv:2312.02072}.

\bibitem[CJK25]{CJK25}
Y.-P. Choi, J.~Jung, and J.~Kim.
\newblock On well/ill-posedness for the generalized surface quasi-geostrophic
  equation in {H}\"{o}lder spaces.
\newblock {\em J. Differential Equations}, 443:Paper No. 113521, 36, (2025).

\bibitem[CJNO25]{CJNO}
D.~Chae, I.-J. Jeong, J.~Na, and S.-J. Oh.
\newblock Well-posedness for {O}hkitani model and long-time existence for
  surface quasi-geostrophic equations.
\newblock {\em Comm. Math. Phys.}, 406(4):Paper No. 75, 25, (2025).

\bibitem[CJO]{CJO1}
D.~Chae, I.-J. Jeong, and S.-J. Oh.
\newblock Illposedness via degenerate dispersion for generalized surface
  quasi-geostrophic equations with singular velocities.
\newblock \href{https://arxiv.org/abs/2308.02120}{arXiv:2308.02120}.

\bibitem[CKO24a]{CKO24b}
T.~Cie\'{s}lak, P.~Kokocki, and W.~S. O\.{z}a\'{n}ski.
\newblock Linear instability of symmetric logarithmic spiral vortex sheets.
\newblock {\em J. Math. Fluid Mech.}, 26(2):Paper No. 21, 27, (2024).

\bibitem[CKO24b]{CKO24a}
T.~Cie\'{s}lak, P.~Kokocki, and W.~S. O\.{z}a\'{n}ski.
\newblock Well-posedness of logarithmic spiral vortex sheets.
\newblock {\em J. Differential Equations}, 389:508--539, (2024).

\bibitem[CKO25]{CKO25}
T.~Cie\'{s}lak, P.~Kokocki, and W.~S. O\.{z}a\'{n}ski.
\newblock Existence of nonsymmetric logarithmic spiral vortex sheet solutions
  to the 2{D} {E}uler equations.
\newblock {\em Ann. Sc. Norm. Super. Pisa Cl. Sci. (5)}, 26(1):301--340,
  (2025).

\bibitem[CLM85]{CLM85}
P.~Constantin, P.D. Lax, and A.~Majda.
\newblock A simple one-dimensional model for the three-dimensional vorticity
  equation.
\newblock {\em Commun. Pure Appl. Math.}, 38(6):715--724, (1985).

\bibitem[CMT94a]{CMT94}
P.~Constantin, A.~J. Majda, and E.~Tabak.
\newblock Formation of strong fronts in the 2-d quasigeostrophic thermal active
  scalar.
\newblock {\em Nonlinearity}, 7(6):1495--1533, (1994).

\bibitem[CMT94b]{CMT1}
P.~Constantin, A.~J. Majda, and E.~G. Tabak.
\newblock Singular front formation in a model for quasigeostrophic flow.
\newblock {\em Phys. Fluids}, 6(1):9--11, (1994).

\bibitem[CMZ22]{CMZ1}
D.~C\'{o}rdoba and L.~Mart\'{\i}nez-Zoroa.
\newblock Non existence and strong ill-posedness in {$C^k$} and {S}obolev
  spaces for {SQG}.
\newblock {\em Adv. Math.}, 407:Paper No. 108570, 74, (2022).

\bibitem[CMZ24]{CMZ2}
D.~C\'{o}rdoba and L.~Mart\'{\i}nez-Zoroa.
\newblock Non-existence and strong ill-posedness in {$C^{k,\beta}$} for the
  generalized surface quasi-geostrophic equation.
\newblock {\em Comm. Math. Phys.}, 405(7):Paper No. 170, 53, (2024).

\bibitem[CMZO24]{CMZO}
D.~C\'{o}rdoba, L.~Mart\'{\i}nez-Zoroa, and W.~S. O\.{z}a\'{n}ski.
\newblock Instantaneous gap loss of {S}obolev regularity for the 2{D}
  incompressible {E}uler equations.
\newblock {\em Duke Math. J.}, 173(10):1931--1971, (2024).

\bibitem[CMZO25]{CMZO2}
D.~C\'{o}rdoba, L.~Mart\'{\i}nez-Zoroa, and W.~S. O\.{z}a\'{n}ski.
\newblock Instantaneous continuous loss of regularity for the {S}{Q}{G}
  equation.
\newblock {\em Adv. Math.}, 481:110553, (2025).

\bibitem[CW12]{ChWu}
D.~Chae and J.~Wu.
\newblock Logarithmically regularized inviscid models in borderline {S}obolev
  spaces.
\newblock {\em J. Math. Phys.}, 53(11):115601, 15, (2012).

\bibitem[Dah24]{Dahne:highest-cusped-waves-fkdv}
J.~Dahne.
\newblock Highest cusped waves for the fractional {K}d{V} equations.
\newblock {\em J. Differential Equations}, 401:550--670, (2024).

\bibitem[DE23]{DE}
T.~D. Drivas and T.~M. Elgindi.
\newblock Singularity formation in the incompressible {E}uler equation in
  finite and infinite time.
\newblock {\em EMS Surv. Math. Sci.}, 10(no. 1):pp. 1--100, (2023).

\bibitem[Deb07]{Deb}
L.~Debnath.
\newblock {\em Integral Transforms and Their Applications}.
\newblock Texts in Applied Mathematics. Chapman and Hall/CRC, Boca Raton, FL,
  3rd edition, 2007.
\newblock Third Edition.

\bibitem[Den09]{Den09}
S.~A. Denisov.
\newblock Infinite superlinear growth of the gradient for the two-dimensional
  {E}uler equation.
\newblock {\em Discrete Contin. Dyn. Syst.}, 23(3):755--764, (2009).

\bibitem[Den15a]{Den15b}
S.~A. Denisov.
\newblock The centrally symmetric {$V$}-states for active scalar equations.
  {T}wo-dimensional {E}uler with cut-off.
\newblock {\em Comm. Math. Phys.}, 337(2):955--1009, (2015).

\bibitem[Den15b]{Den15a}
S.~A. Denisov.
\newblock The sharp corner formation in 2{D} {E}uler dynamics of patches:
  infinite double exponential rate of merging.
\newblock {\em Arch. Ration. Mech. Anal.}, 215(2):675--705, (2015).

\bibitem[DG90]{DeG}
S.~De~Gregorio.
\newblock On a one-dimensional model for the three-dimensional vorticity
  equation.
\newblock {\em J. Stat. Phys.}, 59((5--6)):1251--1263, (1990).

\bibitem[DGS23]{Dahne-GomezSerrano:highest-wave-burgershilbert}
J.~Dahne and J.~G\'{o}mez-Serrano.
\newblock Highest cusped waves for the {B}urgers-{H}ilbert equation.
\newblock {\em Arch. Ration. Mech. Anal.}, 247(5):Paper No. 74, 55, (2023).

\bibitem[DM25]{DoMe}
M.~Dolce and G.~Mescolinii.
\newblock Self-similar instability and forced nonuniqueness: an application to
  the 2{D} {E}uler equations.
\newblock {\em J. Lond. Math. Soc.}, 112:e70274, (2025).

\bibitem[EG19]{EG}
V.~Elling and M.~V. Gnann.
\newblock Variety of unsymmetric multibranched logarithmic vortex spirals.
\newblock {\em European J. Appl. Math.}, 30:23--38, (2019).

\bibitem[EGM21]{EGM21}
T.~M. Elgindi, T.-E. Ghoul, and N.~Masmoudi.
\newblock Stable self-similar blow-up for a family of nonlocal transport
  equations.
\newblock {\em Anal. PDE}, 14(3):891--908, (2021).

\bibitem[EH25]{EH}
T.~M. Elgindi and Y.~Huang.
\newblock Regular and singular steady states of 2d incompressible euler
  equations near the bahouri-chemin patch.
\newblock {\em Arch. Rational Mech. Anal.}, 249(1):2, (2025).

\bibitem[EJ]{ElJo}
T.~M. Elgindi and M.~J. Jo.
\newblock {C}usp {F}ormation in {V}ortex {P}atches.
\newblock \href{https://arxiv.org/abs/2504.02705}{arXiv:2504.02705}.

\bibitem[EJ17]{EJ17}
T.~M. Elgindi and I.-J. Jeong.
\newblock Ill-posedness for the {I}ncompressible {E}uler {E}quations in
  {C}ritical {S}obolev {S}paces.
\newblock {\em Ann. PDE}, 3(1):3:7, (2017).

\bibitem[EJ20a]{EJSVP2}
T.~M. Elgindi and I.-J. Jeong.
\newblock On singular vortex patches, {II}: long-time dynamics.
\newblock {\em Trans. Amer. Math. Soc.}, 373(9):6757--6775, (2020).

\bibitem[EJ20b]{EJ20c}
T.~M. Elgindi and I.-J. Jeong.
\newblock On the effects of advection and vortex stretching.
\newblock {\em Arch. Ration. Mech. Anal.}, 235:1763--1817, (2020).

\bibitem[EJ20c]{EJ20b}
T.~M. Elgindi and I.-J. Jeong.
\newblock Symmetries and critical phenomena in fluids.
\newblock {\em Comm. Pure Appl. Math.}, 73:257--316, (2020).

\bibitem[EJ23]{EJSVP1}
T.~M. Elgindi and I.-J. Jeong.
\newblock On singular vortex patches, {I}: {W}ell-posedness issues.
\newblock {\em Mem. Amer. Math. Soc.}, 283(1400):v+89, (2023).

\bibitem[Ell16]{Elling}
V.~Elling.
\newblock Self-similar 2d {E}uler solutions with mixed-sign vorticity.
\newblock {\em Comm. Math. Phys.}, 348:27--68, (2016).

\bibitem[EM20]{Elgindi20}
T.~M. Elgindi and N.~Masmoudi.
\newblock {$L^\infty$} ill-posedness for a class of equations arising in
  hydrodynamics.
\newblock {\em Arch. Ration. Mech. Anal.}, 235:1979--2025, (2020).

\bibitem[EMS]{EMS}
T.~M. Elgindi, R.~W. Murray, and A.~R. Said.
\newblock Wellposedness and singularity formation beyond the {Y}udovich class.
\newblock \href{https://arxiv.org/abs/2312.17610}{arXiv:2312.17610v1}.

\bibitem[EMS25]{Elgindi2022}
T.~M. Elgindi, R.~W. Murray, and A.~R. Said.
\newblock On the long-time behavior of scale-invariant solutions to the 2d
  {E}uler equation and applications.
\newblock {\em Ann. Sci. \'Ec. Norm. Sup\'er.}, 58(4):943--970, (2025).

\bibitem[FRRO24]{Ros24}
X.~Fern\'andez-Real and X.~Ros-Oton.
\newblock {\em Integro-differential elliptic equations}, volume 350 of {\em
  Progress in Mathematics}.
\newblock Birkh\"auser/Springer, Cham, [2024] \copyright 2024.

\bibitem[GGS24]{GGS24}
C.~Garc{\'\i}a and J.~G{\'o}mez-Serrano.
\newblock {S}elf-similar spirals for the generalized surface quasi-geostrophic
  equations.
\newblock {\em J. Eur. Math. Soc.}, (2024).

\bibitem[GNP22]{GNP22}
F.~Gancedo, H.~Q. Nguyen, and N.~Patel.
\newblock Well-posedness for {SQG} sharp fronts with unbounded curvature.
\newblock {\em Math. Models Methods Appl. Sci.}, 32(13):2551--2599, (2022).

\bibitem[GP21]{GP21}
F.~Gancedo and N.~Patel.
\newblock On the local existence and blow-up for generalized sqg patches.
\newblock {\em Ann. PDE}, 7(1):Paper No. 4, 63, (2021).

\bibitem[Gra08]{Grafakos}
L.~Grafakos.
\newblock {\em Classical {F}ourier analysis}, volume 249 of {\em Graduate Texts
  in Mathematics}.
\newblock Springer, New York, second edition, 2008.

\bibitem[HQWW24]{HQWW}
D.~Huang, X.~Qin, X.~Wang, and D.~Wei.
\newblock Self-similar finite-time blowups with smooth profiles of the
  generalized constantin--lax--majda model.
\newblock {\em Arch. Rational Mech. Anal.}, 248(22), (2024).

\bibitem[HTW23]{HTW}
D.~Huang, J.~Tong, and D.~Wei.
\newblock On self-similar finite-time blowups of the de gregorio model on the
  real line.
\newblock {\em Commun. Math. Phys.}, 402:2791--2829, (2023).

\bibitem[Jeo21]{Jeong21}
I.-J. Jeong.
\newblock Loss of regularity for the 2{D} euler equations.
\newblock {\em J. Math. Fluid Mech.}, 23(no. 4):Paper No. 95, 11 pp, (2021).

\bibitem[JJ23]{JeonJeong}
J.~Jeon and I.-J. Jeong.
\newblock On evolution of corner-like g{SQG} patches.
\newblock {\em J. Math. Fluid Mech.}, 25(2):Paper No. 35, 10, (2023).

\bibitem[JKY25]{JKY25}
I.-J. Jeong, J.~Kim, and Y.~Yao.
\newblock On well-posedness of {$\alpha$}-{SQG} equations in the half-plane.
\newblock {\em Trans. Amer. Math. Soc.}, 378(1):421--446, (2025).

\bibitem[JS24]{JS24}
I.-J. Jeong and A.~R. Said.
\newblock Logarithmic spirals in $2$d perfect fluids.
\newblock {\em J. \'{E}c. polytech. Math.}, 11:655--682, (2024).

\bibitem[JSS19]{JSS19}
H.~Jia, S.~Stewart, and V.~Sverak.
\newblock On the de gregorio modification of the constantin--lax--majda model.
\newblock {\em Archive for Rational Mechanics and Analysis}, 231(2):1269--1304,
  (2019).

\bibitem[KJ24]{KJ24}
J.~Kim and I.-J. Jeong.
\newblock Strong illposedness for {S}{Q}{G} in critical {S}obolev spaces.
\newblock {\em Analysis \& PDE}, 17(1):133--170, (2024).

\bibitem[KL25]{KL25}
A.~Kiselev and X.~Luo.
\newblock The {$\alpha$}-{SQG} patch problem is illposed in {$C^{2,\beta}$} and
  {$W^{2,p}$}.
\newblock {\em Comm. Pure Appl. Math.}, 78(4):742--820, (2025).

\bibitem[KRYZ16]{KRYZ}
A.~Kiselev, L.~Ryzhik, Y.~Yao, and A.~Zlato\v{s}.
\newblock Finite time singularity for the modified {S}{Q}{G} patch equation.
\newblock {\em Ann. of Math. (2)}, 184(3):909--948, (2016).

\bibitem[Kv14]{KS}
A.~Kiselev and V.~\v{S}ver\'{a}k.
\newblock Small scale creation for solutions of the incompressible
  two-dimensional euler equation.
\newblock {\em Ann. of Math. (2)}, 180(3):1205--1220, (2014).

\bibitem[Kwo21]{Kwon}
H.~Kwon.
\newblock Strong ill-posedness of logarithmically regularized 2{D} {E}uler
  equations in the borderline {S}obolev space.
\newblock {\em J. Funct. Anal.}, 280(7):108822, (2021).

\bibitem[KYZ17]{KYZ17}
A.~Kiselev, Y.~Yao, and A.~Zlato{\v{s}}.
\newblock Local regularity for the modified {SQG} patch equation.
\newblock {\em Comm. Pure Appl. Math.}, 70(7):1253--1315, (2017).

\bibitem[LBB94]{LB94}
D.~Lynden-Bell and C.~Boily.
\newblock Self-similar solutions up to flashpoint in highly wound
  magnetostatics.
\newblock {\em Mon. Not. R. Astron. Soc.}, 267:146--152, (1994).

\bibitem[Lev44]{Levenberg:Levenberg-Marquardt}
K.~Levenberg.
\newblock A method for the solution of certain non-linear problems in least
  squares.
\newblock {\em Quart. Appl. Math.}, 2:164--168, (1944).

\bibitem[LLR20]{LLR}
Z.~Lei, J.~Liu, and X.~Ren.
\newblock On the constantin--lax--majda model with convection.
\newblock {\em Communications in Mathematical Physics}, 375(1):765--783,
  (2020).

\bibitem[LSS21]{LSS}
P.~M. Lushnikov, D.~A. Silantyev, and M.~Siegel.
\newblock Collapse versus blow-up and global existence in the generalized
  constantin--lax--majda equation.
\newblock {\em J. Nonlinear Sci.}, 31(5):1--56, (2021).

\bibitem[Mar63]{Marquardt:Levenberg-Marquardt}
D.~W. Marquardt.
\newblock An algorithm for least-squares estimation of nonlinear parameters.
\newblock {\em J. Soc. Indust. Appl. Math.}, 11:431--441, (1963).

\bibitem[MB02]{MaB}
A.~J. Majda and A.~L. Bertozzi.
\newblock {\em Vorticity and incompressible flow}, volume~27 of {\em Cambridge
  Texts in Applied Mathematics}.
\newblock Cambridge University Press, Cambridge, 2002.

\bibitem[MTXX]{MTXX}
Q.~Miao, C.~Tan, L.~Xue, and Z.~Xue.
\newblock Local regularity and finite-time singularity for a class of
  generalized {SQG} patches on the half-plane.
\newblock \href{https://arxiv.org/abs/2410.19273}{arXiv:2410.19273}.

\bibitem[Ohk11]{OH1}
K.~Ohkitani.
\newblock Growth rate analysis of scalar gradients in generalized surface
  quasigeostrophic equations of ideal fluids.
\newblock {\em Phys. Rev. E (3)}, 83(3):036317, 8, (2011).

\bibitem[OSW08]{OSW08}
H.~Okamoto, T.~Sakajo, and M.~Wunsch.
\newblock {O}n a generalization of the {C}onstantin--{L}ax--{M}ajda equation.
\newblock {\em Nonlinearity}, 21(10):2447, (2008).

\bibitem[OSW14]{OSW14}
H.~Okamoto, T.~Sakajo, and M.~Wunsch.
\newblock Steady-states and traveling-wave solutions of the generalized
  {C}onstantin-{L}ax-{M}ajda equation.
\newblock {\em Discrete Contin. Dyn. Syst.}, 34(8):3155--3170, (2014).

\bibitem[PC]{miguel}
M.~M.~G. Pascual-Caballo.
\newblock Stationary radial homogeneous solutions for the inviscid {SQG}
  equation.
\newblock \href{https://arxiv.org/abs/2510.03108}{arXiv:2510.03108}.

\bibitem[RS16]{RS16}
L.~Roncal and P.~R. Stinga.
\newblock Fractional laplacian on the torus.
\newblock {\em Communications in Contemporary Mathematics}, 18(03):1550033,
  (2016).

\bibitem[Sco11]{Scott}
R~K. Scott.
\newblock A scenario for finite-time singularity in the quasigeostrophic model.
\newblock {\em Journal of Fluid Mechanics}, 687(11):492--502, 2011.

\bibitem[SD19]{SD19}
R.~K. Scott and D.~G. Dritschel.
\newblock Scale-invariant singularity of the surface quasigeostrophic patch.
\newblock {\em Journal of Fluid Mechanics}, 863:R2, (2019).

\bibitem[Shv18]{Shv}
R.~Shvydkoy.
\newblock Homogeneous solutions to the 3{D} {E}uler system.
\newblock {\em Trans. Amer. Math. Soc.}, 370:2517--2535, (2018).

\bibitem[Sil07]{Sil}
L.~Silvestre.
\newblock Regularity of the obstacle problem for a fractional power of the
  laplace operator.
\newblock {\em Commun. Pure Appl. Math.}, 60:67--112, (2007).

\bibitem[Ste70]{Stein70}
E.~M. Stein.
\newblock {\em Singular integrals and differentiability properties of
  functions}.
\newblock Princeton Mathematical Series, No. 30. Princeton University Press,
  Princeton, N.J., 1970.

\bibitem[SWZ25]{SWZ25}
F.~Shao, D.~Wei, and Z.~Zhang.
\newblock Self-similar algebraic spiral solution of 2-{D} incompressible
  {E}uler equations.
\newblock {\em Ann. PDE}, 11(1):Paper No. 13, 91, (2025).

\bibitem[Tri83]{Triebel1}
H.~Triebel.
\newblock {\em Theory of function spaces}, volume~78 of {\em Monographs in
  Mathematics}.
\newblock Birkh\"auser Verlag, Basel, 1983.

\bibitem[Visa]{Vishik18}
M.~Vishik.
\newblock {I}nstability and non-uniqueness in the {C}auchy problem for the
  {E}uler equations of an ideal incompressible fluid. {P}art {I}.
\newblock \href{https://arxiv.org/abs/1805.09426}{arXiv:1805.09426}.

\bibitem[Visb]{Vishik18b}
M.~Vishik.
\newblock {I}nstability and non-uniqueness in the {C}auchy problem for the
  {E}uler equations of an ideal incompressible fluid. {P}art {II}.
\newblock \href{https://arxiv.org/abs/1805.09440}{arXiv:1805.09440}.

\bibitem[Wil96]{Willem}
M.~Willem.
\newblock {\em Minimax theorems}, volume~24 of {\em Progress in Nonlinear
  Differential Equations and their Applications}.
\newblock Birkh\"{a}user Boston, Inc., Boston, MA, 1996.

\bibitem[WLGSB23]{WLGB}
Y.~Wang, C.-Y. Lai, J.~G\'{o}mez-Serrano, and T.~Buckmaster.
\newblock Asymptotic self-similar blow-up profile for three-dimensional
  axisymmetric {E}uler equations using neural networks.
\newblock {\em Phys. Rev. Lett.}, 130(24):Paper No. 244002, 6, (2023).

\bibitem[Zla25]{Zlatos23}
A.~Zlato\v{s}.
\newblock Local regularity and finite time singularity for the generalized
  {SQG} equation on the half-plane.
\newblock {\em Duke Math. J.}, 174(17):3493 -- 3533, (2025).

\end{thebibliography}

\newcommand{\etalchar}[1]{$^{#1}$}

\end{document}